\documentclass[11pt,twoside]{amsart}
\usepackage{t1enc}
\usepackage[latin2]{inputenc}

\usepackage{phaistos}

\usepackage{amsmath, amsthm, amscd, amssymb, txfonts, verbatim}
\usepackage[T1]{fontenc}
\usepackage{graphicx, nicefrac, url, color}
\usepackage{enumitem}
\usepackage{hyperref} 
\usepackage[normalem]{ulem}
\usepackage{cancel}
\usepackage{multicol}
\usepackage{wasysym}

\usepackage{mathpazo} 

\newcommand{\mc}{\mathcal}
\newcommand{\mbb}{\mathbb}
\newcommand{\mrm}{\mathrm}

\newcommand{\mbf}{\mathbf}

\newcommand{\0}{\emptyset}
\newcommand{\al}{\alpha}
\newcommand{\be}{\beta}
\newcommand{\ga}{\gamma}

\newcommand{\eps}{\varepsilon}

\newcommand{\om}{\omega}

\newcommand{\PP}{\mathbb{P}}
\newcommand{\NN}{\mathbb{N}}

\newcommand{\clrest}{\!\upharpoonright\!}
\newcommand{\wt}{\widetilde}
\newcommand{\dom}{\mathrm{dom}}

\newcommand{\bc}{\begin{center}}
\newcommand{\ec}{\end{center}}

\newcommand{\N}{\mathbb{N}}
\newcommand{\R}{\mathbb{R}}
\newcommand{\Q}{\mathbb{Q}}

\newcommand{\dotleq}{\,\dot{\leqslant}\,}

\DeclareMathOperator\supp{supp}

\DeclareMathOperator{\spa}{span}

\newtheorem*{nnclaim}{Claim}
\newtheorem*{nnthm}{Theorem}

\newtheorem{thm}{Theorem}[section]
\newtheorem{lem}[thm]{Lemma}
\newtheorem{prop}[thm]{Proposition}
\newtheorem{cor}[thm]{Corollary}
\newtheorem{fact}[thm]{Fact}

\theoremstyle{definition}
\newtheorem{que}[thm]{Question}

\newtheorem{df}[thm]{Definition}
\newtheorem{exa}[thm]{Example}

\newtheorem{rem}[thm]{Remark}

\title{Geometric duality, perfect graphs, and the Sierpi\'nski space}

\author[]{}

\author[P. Borodulin--Nadzieja]{Piotr Borodulin--Nadzieja}
\address[Piotr Borodulin--Nadzieja]{Mathematical Institute, University of Wroc\l aw, Pl. Grunwaldzki 2, 50-384 Wroc\l aw, Poland}
\email{pborod@math.uni.wroc.pl}

\author[B. Farkas]{Barnab\'as Farkas}
\address[Barnab\'as Farkas]{DMG/Algebra, TU Wien, Wiedner Hauptstrasse 8-10/104, 1040 Vienna, Austria}
\email{barnabasfarkas@gmail.com}

\author[A. Pelczar--Barwacz]{Anna Pelczar--Barwacz}
\address[Anna Pelczar-Barwacz]{Institute of Mathematics, Faculty of Mathematics and Computer Science, Jagiellonian University, \L ojasiewicza 6, 30-348 Krak\'ow, Poland}
\email{anna.pelczar@uj.edu.pl}

\thanks{B. Farkas was supported by the Austrian Science Fund (FWF) projects nos. I5918 and PAT5730424}

\subjclass[2020]{46B03, 46B20, 46B25, 46B42, 46B45, 05C17} 
\keywords{families of finite sets, combinatorial Banach spaces, Schreier spaces, dyadic stopping time space, extreme points, graph-generated families, geometric duality, perfect graphs, Sierpi\'nski graphs, comparability graphs, emulations of families, Banach lattices, generalized Orlicz spaces} 

\begin{document}

\begin{abstract} 
In their classical paper \emph{On the stopping time Banach space}, Bang and Odell, among a plethora of results concerning the dyadic stopping time space and its dual, presented the first non-trivial example of the \emph{duality phenomenon} between
combinatorial Banach spaces. We give a full characterization of such pairs $(\mc{F}_0, \mc{F}_1)$ of families of finite sets: This duality holds iff there is a
perfect graph $G$ on $\NN$ such that $\mc{F}_0$ consists of all finite cliques of $G$ and $\mc{F}_1$ consists of all finite anti-cliques of $G$. As it turns out, Lov\'asz' famous perfect graph theorem is an immediate corollary of this result. Among the many  examples of such pairs of families, we investigate a particularly interesting
one, when $G$ is the Sierpi\'nski graph, and study general methods of embedding combinatorial and classical sequence spaces in the generated space, including the Schreier and $\ell_p$ spaces.
\end{abstract}

\maketitle

\section{Introduction}

Given a hereditary family $\mathcal{F}$ of finite subsets of $\mathbb{N}$ such that $\bigcup\mc{F}=\NN$, define the extended norm $\|\bullet\|_\mathcal{F}\colon \mathbb{R}^\mathbb{N} \to [0,\infty]$ and two associated spaces as follows:
\begin{align*} 
\| x \|_\mathcal{F}& = \sup\Big\{\sum_{i\in F} |x(i)|:F\in\mc{F}\Big\},\\
Y_\mathcal{F}& = \big\{x\in \mathbb{R}^\mathbb{N}: \| x \|_\mathcal{F} < \infty \big\}, \\
X_\mathcal{F}& = \big\{x\in \mathbb{R}^\mathbb{N}: \|P_{[n,\infty)}(x)\|_\mathcal{F} \to 0\big\} 
\end{align*}
where $P_A:\mbb{R}^\NN\to\mbb{R}^\NN$ is the usual projection along $A\subseteq\NN$.
Then $X_\mc{F}\subseteq Y_\mc{F}$ equipped with $\|\bullet\|_\mc{F}$ are Banach spaces, $X_\mathcal{F}$ is the completion of $c_{00}$, and the canonical algebraic basis $(e_n)$ of $c_{00}$ is a $1$-unconditional basis in $X_\mc{F}$. Spaces of the form $X_\mc{F}$ are called \emph{combinatorial (Banach) spaces}.\footnote{In some papers, because of the interplay between these spaces and analytic P-ideals, $X_\mc{F}$ is denoted by $\mrm{EXH}(\mc{F})$ and $Y_\mc{F}$ by $\mrm{FIN}(\mc{F})$. As here we do not deal with ideals, we will use the shorter notations.} For a survey on these spaces see Borodulin--Nadzieja, Farkas, Jachimek, and Pelczar--Barwacz \cite{ZOO}. 

\smallskip 
For example, let $[\NN]^{\leq n}=\{F\subseteq\NN:|F|\leq n\}$ and $[\NN]^{<\infty}=\{E\subseteq\NN:|E|<\infty\}$, then \[ X_{[\NN]^{\leq 1}}=c_0,\,Y_{[\NN]^{\leq 1}}=\ell_\infty,\,\text{ and }\,X_{[\NN]^{<\infty}}=Y_{[\NN]^{<\infty}}=\ell_1.\] Apart from $c_0$ and $\ell_1$, classical and well-studied examples of combinatorial spaces include the following: The \emph{Schreier space} $X_\mc{S}$ generated by the \emph{Schreier family} 
\[ \mc{S}=\big\{S\in [\NN]^{<\infty}:S=\0\,\text{ or }\,|S|\leq \min S\big\}\] 
and its higher order siblings (see later). Also, the \emph{Rosenthal space} $X_\mc{C}$ and the \emph{stopping time space} $X_\mc{A}$ where 
\begin{align*}
\mc{C} &=\big\{C\subseteq 2^{<\NN}:C\,\text{ is a finite chain}\big\},\\
\mc{A} &=\big\{A\subseteq 2^{<\NN}:A\,\text{ is a finite anti-chain}\big\}
\end{align*} 
and, of course, we define the associated spaces over the dyadic tree $2^{<\NN}$ instead of $\NN$. For a survey on these spaces see Bang and Odell \cite{BangOdell} where, among other results, it was proved that $X_\mc{C}$ is universal among separable Banach spaces with unconditional bases, that is, $X_\mc{C}$ contains (isomorphic) copies of all such spaces; and that $X_\mc{A}$ contains copies of all $\ell_p$ spaces ($1\leq p<\infty$).

\smallskip
The spaces of the form $Y_\mathcal{F}$ have been studied less intensively but we know that their structure is closely connected with $X_\mc{F}$: Considering $X_\mc{F}\subseteq X_\mc{F}^{**}$ and applying some results from Apatsidis \cite{Apatsidis}, it turns out that for every $y\in Y_\mc{F}$ the series $\sum_{n=1}^{\infty}y(n)e_n$ is weak$^*$ convergent in $X_\mc{F}^{**}$, moreover, the map 
\[ Y_\mc{F}\to X_\mc{F}^{**},\;y\mapsto w^*\!\!\lim_{n\to\infty}\sum_{k=1}^ny(k)e_k\] is an isometric isomorphism between $Y_\mc{F}$ and the weak$^*$ sequential closure of $X_\mc{F}$ in $X_\mc{F}^{**}$. As this isomorphism restricted to $X_\mc{F}$ is the identity, we may always consider 
\[ X_\mc{F}\subseteq Y_\mc{F}\subseteq X_\mc{F}^{**}.\] 
The two extremes, namely, equality on the left or right sides above can be characterized as follows (see Borodulin--Nadzieja and Farkas \cite{BNF20}): 
Given an $\mc{F}$ as above, the following are equivalent:
\begin{itemize}\setlength\itemsep{0.1cm}
\item[(a)] $X_\mc{F}=Y_\mc{F}$, 
\item[(b)] $X_\mc{F}$ and $Y_\mc{F}$ are isomorphic, 
\item[(c)] $X_\mc{F}$ does not contain copies of $c_0$;
\end{itemize}
and similarly, the following are also equivalent:
\begin{itemize}\setlength\itemsep{0.1cm}
\item[(a')] $Y_\mc{F}=X_\mathcal{F}^{**}$,
\item[(b')] $Y_\mc{F}$ and $X_\mathcal{F}^{**}$ are isomorphic, 
\item[(c')] $X_\mc{F}$ does not contain copies of $\ell_1$.
\end{itemize}

The work behind this paper was motivated by the observation that sometimes the dual space of $X_\mathcal{F}$, considering $X_\mc{F}^*\subseteq\mbb{R}^\NN$ along $(e^*_n)$, coincides with $Y_\mathcal{G}$ for some other family $\mathcal{G}$. This can be observed e.g. in the following cases (for $\mc{A}$ and $\mc{C}$ see \cite{BangOdell}):
\begin{multicols}{2}
\noindent\begin{align*}
	 X_{[\NN]^{\leq 1}}^* = c^*_0 &= \ell_1 = Y_{[\NN]^{<\infty}}\\
	 X_{[\NN]^{<\infty}}^* = \ell^*_1 &= \ell_\infty = Y_{[\NN]^{\leq 1}}
\end{align*}
\columnbreak
\begin{align*}
X_{\mc{C}}^* & =Y_{\mc{A}}\\
X_{\mc{A}}^* & =Y_{\mc{C}}
\end{align*}
\end{multicols}

We call this phenomenon \emph{geometric duality} (see later for the precise definitions) and we have found its rather surprising and purely combinatorial characterization (see Theorem \ref{conv}):

\begin{nnthm}\label{main}
The families $\mathcal{F}_0$ and $\mathcal{F}_1$ are geometrically dual if and only if there is a perfect graph $G$ on $\NN$ such that $\mathcal{F}_0=\{$finite cliques in $G\}$ and $\mathcal{F}_1=\{$finite anti-cliques in $G\}$.
\end{nnthm}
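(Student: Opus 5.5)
The plan is to reduce everything to finite-dimensional polytope duality (anti-blocking polyhedra in the sense of Fulkerson) and then read off the graph. I take geometric duality of $\mc{F}_0,\mc{F}_1$ to mean that $X_{\mc{F}_0}^*=Y_{\mc{F}_1}$ isometrically along $(e_n^*)$, and symmetrically $X_{\mc{F}_1}^*=Y_{\mc{F}_0}$.

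\emph{Reduction to finite sets.} Each norm is the supremum of its restrictions to $[1,n]$, and $c_{00}$ is dense in $X_{\mc{F}_0}$. Hence for $y\in\mbb{R}^\NN$ the functional norm equals $\sup_n \|P_{[1,n]}y\|^*_{\mc{F}_0\restriction n}$, where the dual norm is computed in $\mbb{R}^n$. So duality is equivalent to the following: for every $n$, the dual norm of $\|\bullet\|_{\mc{F}_0\restriction n}$ equals $\|\bullet\|_{\mc{F}_1\restriction n}$, and symmetrically.

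\emph{Translation into polytopes.} All these norms are lattice norms, so it suffices to look at positive parts of the unit balls. For a hereditary family $\mc{F}$ on $[1,n]$ let $P(\mc{F})=\mathrm{conv}\{1_F:F\in\mc{F}\}$. This set is down-closed in $\mbb{R}^n_{\geq0}$ because $\mc{F}$ is hereditary. Let $Q(\mc{F})=\{y\geq0: y(F)\leq1\ \forall F\in\mc{F}\}$ denote its anti-blocker. Then:
\begin{itemize}
\item the positive part of the $\mc{F}$-unit ball is $Q(\mc{F})$;
\item the positive part of the dual ball is the anti-blocker of $Q(\mc{F})$, which is $P(\mc{F})$ by Fulkerson's anti-blocking duality (a bipolar argument).
\end{itemize}
So duality in both directions says exactly
\[ P(\mc{F}_0)=Q(\mc{F}_1)\quad\text{and}\quad P(\mc{F}_1)=Q(\mc{F}_0)\]
on every $[1,n]$. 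By anti-blocking duality again, either equality implies the other. This observation is where Lovász' theorem will drop out.

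\emph{Direction ``$\Rightarrow$''.} Suppose $P(\mc{F}_0)=Q(\mc{F}_1)$. Then a $0$-$1$ vector $1_S$ lies in $P(\mc{F}_0)$ iff $S\in\mc{F}_0$. The only nontrivial point is that a $0$-$1$ point of $P(\mc{F}_0)$ must be a vertex, hence some $1_F$; this holds because $0$-$1$ points are extreme in the cube. On the other side, $1_S\in Q(\mc{F}_1)$ iff $|S\cap A|\leq1$ for all $A\in\mc{F}_1$. So
\[ \mc{F}_0=\{S:|S\cap A|\leq1\ \forall A\in\mc{F}_1\},\]
and symmetrically for $\mc{F}_1$.

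Define $G$ by putting an edge $ij$ iff $\{i,j\}\in\mc{F}_0$. Singletons lie in both families since $\bigcup\mc{F}_i=\NN$. Now a pair lies in $\mc{F}_1$ iff it is not in $\mc{F}_0$:
\begin{itemize}
\item if $\{i,j\}\in\mc{F}_0$, it meets itself in two points, so it is not in $\mc{F}_1$;
\item if $\{i,j\}\notin\mc{F}_0$, then by heredity it meets every member of $\mc{F}_0$ in at most one point, so it is in $\mc{F}_1$.
\end{itemize}
Thus $\mc{F}_1$ consists of sets all of whose pairs are non-edges, i.e.\ anti-cliques. Then $\mc{F}_0$, being the family of sets meeting every anti-clique in at most one point, is exactly the family of finite cliques.

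The polytope identity now reads: the stable set polytope of $\bar G\restriction n$ equals its fractional version $\mathrm{QSTAB}(\bar G\restriction n)$ for every $n$. By the theorem of Fulkerson and Chvátal (proved via Lovász' replication lemma, not via the perfect graph theorem), this means every finite induced subgraph of $\bar G$ is perfect. Applying the same to the second equality gives that $G$ is perfect. The second equality is automatic anyway, which is precisely how the perfect graph theorem will follow as a corollary.

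\emph{Direction ``$\Leftarrow$'' and the main obstacle.} Given a perfect $G$, perfection passes to finite induced subgraphs, and the replication-lemma direction of Fulkerson–Chvátal gives integrality of the clique-constrained polytope, i.e.\ $P(\text{cliques})=Q(\text{anti-cliques})$ on each $[1,n]$. Anti-blocking duality supplies the other equality, and the finite-to-infinite reduction finishes the proof.

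The main obstacle is making the polyhedral input honest: I must invoke only the integrality characterization of perfection and not Lovász' theorem itself. Otherwise the claimed corollary is circular. A smaller point to check is that the dual of $X_{\mc{F}_0}$, viewed in $\mbb{R}^\NN$, is really captured by the finite restrictions; this uses only density of $c_{00}$ and $1$-unconditionality.
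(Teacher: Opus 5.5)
Your proposal is correct and follows essentially the paper's route: reduce to the finite restrictions $\mc{F}_i\clrest[1,n]$, identify the positive parts of the unit ball and the dual ball with the $\mathrm{QSTAB}$- and $\mathrm{STAB}$-type polytopes, and invoke Chv\'atal's characterization (Theorem~\ref{stab}). The differences are only in how the finite-dimensional duality is packaged. You get the orthogonality $\mc{F}_0=\mc{F}_1^\perp$ and the symmetry of duality from Fulkerson's anti-blocking (bipolar) duality and the extremality of $0$-$1$ points in the cube. The paper instead uses explicit test vectors and the Hahn--Banach bidual claim in Proposition~\ref{geomsym}, together with the extreme-point description of $B(X^*_{\mc F})$ from Proposition~\ref{extremedual}.
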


For example, if $G=(\NN,\0)$ is the graph on $\mathbb{N}$ without any edges, then the cliques are the singletons and every finite subset of $\mathbb{N}$ is an anti-clique. Also, if $G$ is the comparability graph of $2^{<\mathbb{N}}$, that is, $s$ is connected by an edge with $t$ iff $s$ extends $t$ or $t$ extends $s$, then the cliques coincide with the chains and the anti-cliques coincide with the anti-chains. 

At first sight, perfect graphs (see the definition below) seem to appear out of the blue in the context of geometric properties of Banach spaces. However, the connection between them and polytopes, crucial for the proof of Theorem \ref{conv}, was already present in the works of Chv\'atal \cite{Chvatal}, Fulkerson \cite{Fulkerson}, Lov\'asz \cite{Lovasz1,Lovasz2}, and others. 

\smallskip
Apart from the characterization theorem, the idea of generating Banach spaces from graphs led to many new interesting examples of combinatorial spaces.

\smallskip
The paper is organized as follows: 

\smallskip
In Section \ref{combspaces} we present a quick introduction to combinatorial spaces. 

\smallskip
In Section \ref{graph} we characterize families of the form $\{$finite (anti-)cliques of $G\}$ where $G$ is a graph on $\NN$ and provide a plethora of examples, including one (see Example \ref{perles}) witnessing that the Schur property does not imply the combinatorial Schur property (introduced in \cite{ZOO}). 

\smallskip
In Section \ref{geomdual} we prove the main theorem (Theorem \ref{conv}) connecting geometric duality and perfect
graphs, and including some further equivalent formulations of geometric duality. This result also yields a new proof of the perfect graph theorem: Proving Berge's conjecture, Lov\'asz (see \cite{Lovasz1}) showed that complements of perfect graphs are also perfect. We do not claim that we found a completely new proof because even Lov\'asz' original one relies on convex analysis of polytopes which is crucial in our proof as well. At the same time, the proof presented below is probably the first one of purely analytical provenience. We find it astonishing that this strictly combinatorial fact can be proven solely through Banach space theory. Another interesting consequence of the main theorem is the description of the extreme points of $B(Y_\mc{F})$ where $\mc{F}$ consists of the finite cliques of a perfect graph (see also Section
\ref{remext}).

\smallskip
In Section \ref{sier1} we study a, in a sense, canonical perfect graph, the \emph{Sierpi\'nski graph}: For a bijection $f:\mathbb{N} \to \mathbb{Q}$ let $n < m$ be connected by an edge if $f(n) < f(m)$, and let
$\mc{C}_f$ be the family of its finite cliques. Analogous objects, defined on uncountable subsets of $\mathbb{R}$, are intensively studied in Ramsey theory and in set theory of the reals. It turns out (see Proposition \ref{nonisom} and Theorem
\ref{permisom}) that, though graphs of this form are isomorphic only in trivial cases, the generated \emph{Sierpi\'nski spaces}, $X_{\mc{C}_f}$, are all isomorphic. Also, it follows that $X_{\mathcal{C}_f}^*$ and $Y_{\mathcal{C}_f}$ are also
isomorphic which, in the light of $X_{\mc{C}_f}\subseteq Y_{\mc{C}_f}\subseteq X_{\mc{C}_f}^{**}$, is somewhat awkward. These properties and the fact (see the later sections) that the Sierpi\'nski space contains copies of many classical sequence spaces make it a peculiar object with rich structure. At the same time, it does not contain copies of e.g. $c_0(\ell_1)$ (see
Theorem \ref{c_0-l_1}), in particular, it is not universal in the class of separable Banach spaces with unconditional bases.

\smallskip
In Section \ref{emulations} a general method of embedding combinatorial spaces isometrically in the Sierpi\'nski space is presented. In particular, we show that this method applies to all Schreier spaces of finite order; also, that if we just slightly modify the definition of the Schreier sequence, then all the resulting Schreier$^*$ spaces embed isometrically in the Sierpi\'nski space.

\smallskip
In Section \ref{sier2} we prove that the Sierpi\'nski space contains (isomorphic) copies of every $\ell_p$ space ($1\leq p<\infty$). In fact, we prove a more general result, namely, that all reflexive Musielak-Orlicz spaces embed in the Sierpi\'nski space. 


\smallskip
In Section \ref{remext} we discuss some partial results regarding the extreme points of $B(Y_\mc{F})$ where $\mc{F}$ consists of all finite cliques of a graph $G$ on $\NN$. Applying the main theorem from Section \ref{geomdual} (see above), if $G$ is perfect, then these extreme points are of very simple, canonical form (see Remark \ref{extYF}). In the case of non-perfect graphs, this question seems to boil down to the analysis of the extreme points in the finite-dimensional subspaces generated by the holes and anitholes of $G$.

\subsection*{Acknowledgment} We would like to thank Sebastian Jachimek for the fruitful discussions and for his permission to enclose some of his results in Section \ref{remext}, Bal\'azs Keszegh for his guidance in the world of finite (hyper)graphs, and Jordi Lopez-Abad for the stimulating discussions over the subject of this paper. 

\section{Combinatorial Banach spaces}\label{combspaces}

We begin with a quick overview of these spaces. In general, we define combinatorial spaces over arbitrary non-empty countable, perhaps finite, $\Omega$ as follows (see \cite{BNF20,ZOO}): Fix a family $\mc{F}\subseteq[\Omega]^{<\infty}$ covering $\Omega$ and let $\|x\|_\mc{F}=\sup\{\sum_{k\in F}|x(k)|:F\in\mc{F}\}$ for $x\in\mbb{R}^\Omega$ and $Y_\mc{F}=\{x\in\mbb{R}^\Omega:\|x\|_\mc{F}<\infty\}$; to define $X_\mc{F}$ for an infinite $\Omega$, fix an enumeration $\Omega=\{a_n:n\in\NN\}$ and set 
\[ X_\mc{F}=\big\{x\in\mbb{R}^\Omega:\|P_{\{a_n,a_{n+1},\dots\}}(x)\|_\mc{F}\xrightarrow{n\to\infty} 0\big\}.\]
Obviously, the enumeration itself does not play any role here because 
\[X_\mc{F}=\big\{x\in\mbb{R}^\Omega:\inf\big\{\|P_{\Omega\setminus E}(x)\|_\mc{F}:E\in [\Omega]^{<\infty}\big\}=0\big\}.\]
The properties mentioned in the introduction still hold: $X_\mc{F}\subseteq Y_\mc{F}$ equipped with $\|\bullet\|_\mc{F}$ are Banach spaces, $X_\mc{F}$ is the completion of $c_{00}(\Omega)$, and the canonical algebraic basis $(e_n)=(e_n)_{n\in\Omega}$ of $c_{00}(\Omega)$ is a normalized $1$-unconditional basis in $X_\mc{F}$. When it is necessary, we will write $(e_{\mc{F},n})$ to clarify the underlying space. 

Notice that (i) we can and will always assume that $\mc{F}$ is hereditary, that is, $E\subseteq F\in\mc{F}$ implies $E\in\mc{F}$, and (ii) working with an arbitrary $\mc{H}\subseteq \mc{P}(\Omega)$ covering $\Omega$ does not lead to more general spaces because $\mc{H}'=\bigcup\{[H]^{<\infty}:H\in\mc{H}\}\subseteq [\Omega]^{<\infty}$ and $\|x\|_\mc{H}=\|x\|_{\mc{H}'}$ for every $x\in\mbb{R}^\Omega$.

\smallskip
We will work, for example, over $\Omega=\{1,\dots,n\},\mbb{N},\mbb{N}\times\mbb{N}$, and the dyadic tree
\[2^{<\mbb{N}}=\big\{s:s=\0\,\text{ or }\,s\,\text{ is a }\,\{1,2,\dots,n\}\to\{0,1\}\,\text{function}\,(n\in\NN)\big\}\]
but for now, as long as the underlying set does not play any crucial role, let us continue over $\Omega=\NN$ and set
\[ \mrm{FHC}=\mrm{FHC}(\NN)=\big\{\mc{F}\subseteq[\NN]^{<\infty}:\mc{F}\,\text{ is hereditary and covers }\,\NN\big\}.\]
When referring to topological properties of and operations on subsets of $\mc{P}(\NN)$, e.g. on elements of $\mrm{FHC}$, we consider $\mc{P}(\NN)\simeq 2^\mbb{N}$ equipped with the usual product topology. For example, the closure of an $\mc{F}\in\mrm{FHC}$ with respect to this topology is 
\[ \overline{\mc{F}}=\mc{F}\cup\big\{A\in [\NN]^{\infty}:[A]^{<\infty}\subseteq\mc{F}\big\}\]
where, of course, $[\NN]^\infty=\{A\subseteq\NN:|A|=\infty\}$.

\smallskip
Notice that an $\mc{F}\in\mrm{FHC}$ is compact (that is, $\overline{\mc{F}}=\mc{F}$) iff $\overline{\mc{F}}\subseteq [\NN]^{<\infty}$ iff $\mc{F}$ does not contain infinite $\subseteq$-chains. For example, $[\NN]^{\leq 1}$ and $\mc{S}$ are compact whereas $[\mbb{N}]^{<\infty}$, $\mc{C}$, and $\mc{A}$ are not. Combining some classical results (see \cite{BNF20}), we obtain that $\mc{F}$ is compact iff $X_\mc{F}$ does not contain copies of $\ell_1$ (see above) iff $X_\mc{F}$ is $c_0$-saturated. Compact families and combinatorial spaces generated by them have been extensively studied, see e.g. \cite{Jordi-Stevo, ExtremeKevin2, Brech-isometries, Brech-BanachStone, BNF20}. 

\smallskip
We will consider $X^*_\mc{F}\subseteq\mbb{R}^ \NN$ along the bi-orthogonal functionals $(e^*_n)$, that is, via $X^*_\mc{F}\ni \al\longleftrightarrow (\al(e_n))\in\mbb{R}^ \NN$. Now, if $\al\in B(X^*_\mc{F})$ then $|\al(n)|\leq 1$ for every $n$; conversely, if $\supp(\al)\in\overline{\mc{F}}$ and $|\al(n)|\leq 1$ for every $n$, then $\al\in B(X^*_\mc{F})$. In particular, if $\sigma\in \{\pm 1,0\}^ \NN$ is such that $\supp(\sigma)\in\overline{\mc{F}}$, then $\sigma\in B(X^*_\mc{F})$ and, unless $\supp(\sigma)=\0$, $\|\sigma\|^*_\mc{F}=1$ (where $\|\bullet\|^*_\mc{F}$ stands for the usual norm on $X^*_\mc{F}$). For $\mc{H}\subseteq\mc{P}(\NN)$ define
\[ W(\mc{H})=\big\{\sigma\in\{\pm 1,0\}^\mbb{N}:\supp(\sigma)\in \mc{H}\big\}.\]
Clearly, if $\mc{H}\subseteq\overline{\mc{F}}$ then $W(\mc{H})\subseteq B(X^*_\mc{F})$, if additionally every $F\in\mc{F}$ can be covered by an element of $\mc{H}$, then (see \cite[Lemma 4]{Brech-isometries}) $W(\mc{H})$ is a norming set and hence $\overline{\mrm{conv}}^{w^*}\!(W(\mc{H}))=B(X^*_\mc{F})$. Also, it is easy to check that the weak$^*$ topology on $W(\overline{\mc{F}})$ coincides with the topology inherited from $\{\pm 1,0\}^\mbb{N}$, and it follows that $(W(\overline{\mc{F}}),w^*)$ is compact and $X_\mc{F}\to C(W(\overline{\mc{F}}),w^*)$, $x\mapsto(\sigma\mapsto \sigma(x))$ is an isometric embedding.

\smallskip
The point is that with the help of the operation $W(\bullet)$ we obtain a clear description of the extreme points of $B(X^*_\mc{F})$: Let $\max(\overline{\mc{F}})$ be the set of all maximal elements of $(\overline{\mc{F}},\subseteq)$, that is, 
\[ \max(\overline{\mc{F}})=\big\{A\in\overline{\mc{F}}:A\,\text{ has no proper extension in }\overline{\mc{F}}\big\}.\] 

\begin{prop}\label{extremedual} \emph{(see \cite[Proposition 2.4]{ZOO})} $\mrm{Ext}(B(X_\mc{F}^*))=W(\max(\overline{\mc{F}}))$. 
\end{prop}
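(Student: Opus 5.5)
We prove the two inclusions separately. Throughout we use that $B(X^*_\mc{F})$ is the weak$^*$ closed convex hull of the weak$^*$ compact set $K=W(\max(\overline{\mc{F}}))$, and that $K$ is weak$^*$ closed in $W(\overline{\mc{F}})$.

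For the first point: every $F\in\mc{F}$ extends, by Zorn's lemma, to a maximal element of $\overline{\mc{F}}$, because $\overline{\mc{F}}$ is closed under unions of $\subseteq$-chains. Hence $\mc{H}=\max(\overline{\mc{F}})$ covers $\mc{F}$, and the quoted \cite[Lemma 4]{Brech-isometries} gives $\overline{\mrm{conv}}^{w^*}(K)=B(X^*_\mc{F})$. For closedness, take $\sigma_k\to\sigma$ coordinatewise with $\supp(\sigma_k)$ maximal. Then $\supp(\sigma)\in\overline{\mc{F}}$. If it were not maximal, pick $n\notin\supp(\sigma)$ with $\supp(\sigma)\cup\{n\}\in\overline{\mc{F}}$. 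This step needs care, and I expect it to be the main obstacle. Maximality of $\supp(\sigma_k)$ gives, for each $k$ with $n\notin\supp(\sigma_k)$, a finite $E_k\subseteq\supp(\sigma_k)$ such that $E_k\cup\{n\}\notin\mc{F}$. However, $E_k$ might escape to infinity, so there is no direct contradiction. Thus $K$ may fail to be closed. We therefore avoid needing closedness and instead use Milman's converse on the compact set $W(\overline{\mc{F}})$. This gives $\mrm{Ext}\subseteq W(\overline{\mc{F}})$, and then we reduce separately to maximal supports.

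\emph{Inclusion $\subseteq$.} By Milman's theorem, every extreme point $\al$ of $B(X^*_\mc{F})=\overline{\mrm{conv}}^{w^*}W(\overline{\mc{F}})$ lies in $W(\overline{\mc{F}})$. It remains to show that $\supp(\al)=A$ is maximal. Suppose $A\cup\{n\}\in\overline{\mc{F}}$ with $n\notin A$. Put $\al^{\pm}=\al\pm e^*_n$. Both have support $A\cup\{n\}\in\overline{\mc{F}}$ and entries in $\{\pm1,0\}$, so both lie in $B(X^*_\mc{F})$. Since $\al=\tfrac12(\al^++\al^-)$, $\al$ is not extreme, a contradiction.

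\emph{Inclusion $\supseteq$.} Let $\sigma\in K$ with $A=\supp(\sigma)$, and suppose $\sigma=\tfrac12(\be+\ga)$ with $\be,\ga\in B(X^*_\mc{F})$. For $i\in A$ we have $|\be(i)|,|\ga(i)|\le1$ and their average is $\pm1$, so $\be(i)=\ga(i)=\sigma(i)$. For $n\notin A$, suppose $\be(n)=c\neq0$, so that $\ga(n)=-c$; say $c>0$ after replacing $\be$ by $\ga$ if necessary. By maximality, $A\cup\{n\}\notin\overline{\mc{F}}$, so there is a finite $E\subseteq A$ with $E\cup\{n\}\notin\mc{F}$. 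Test against $x=\sum_{i\in E}\sigma(i)e_i+t e_n$. We have $\be(x)=|E|+tc$. On the other hand, $\|x\|_\mc{F}\le\max(|E|,\,|E|-1+|t|)$, because any $F\in\mc{F}$ misses either $n$ or some point of $E$. Taking $t=1$ gives $|E|+c\le\|\be\|\,\|x\|\le |E|$, a contradiction. Therefore $\be=\ga=\sigma$, and $\sigma$ is extreme.
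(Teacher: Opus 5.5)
Your final argument is correct and essentially follows the route the paper prepares just before the statement. Milman's theorem applied to the weak$^*$ compact norming set $W(\overline{\mc{F}})$ places the extreme points in $W(\overline{\mc{F}})$; the splitting $\al=\tfrac12\big((\al+e^*_n)+(\al-e^*_n)\big)$ excludes non-maximal supports; and testing against $\sum_{i\in E}\sigma(i)e_i+e_n$ with $E\cup\{n\}\notin\mc{F}$ shows that maximal supports give extreme points. Delete your opening claim that $W(\max(\overline{\mc{F}}))$ is weak$^*$ compact or closed. It is false in general: for the Schreier family the maximal sets $\{n,\dots,2n-1\}$ converge to $\0$. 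As you noticed, the proof never uses it.
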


Let us point put out that describing the extreme points of $B(X_\mc{F})$ is a significantly more difficult problem, even in the case of the very well-studied Schreier space, more or less the only thing we know, see \cite{countext}, is that $\mrm{Ext}(B(X_\mc{S}))$ is countable. For the description of $\mrm{Ext}(B(X_\mc{F}))$ in several classes of combinatorial spaces see \cite[Chapter 4]{SebTh} and Section \ref{remext} below.

\smallskip
In the next section, in a more specified context, we will present many interesting examples of families with the combinatorial spaces they generate.

\section{Graph-generated families}\label{graph}

In the introduction we mentioned two important examples, $\mc{C}=\{C\subseteq 2^{<\NN}:C$ is a finite chain$\}$ and $\mc{A}=\{A\subseteq 2^{<\NN}:A$ is a finite anti-chain$\}$ where $2^{<\NN}$ is considered with the usual partial order, $s\unlhd t$ iff $t$ extends $s$. Of course, we can work with an arbitrary countable poset $\PP=(P,\leq)$ and define $\mc{C}(\PP)=\{C\subseteq P:C$ is a finite chain in $\PP\}$ and $\mc{A}(\PP)=\{A\subseteq P:A$ is a finite anti-chain in $\PP\}$. Moreover, we can go one crucial step further: Given a non-empty countable $V$ and a (simple) graph $G=(V,E)$, that is, $E\subseteq [V]^2=\{\{a,b\}:a,b\in V,a\ne b\}$, define  
\begin{align*} 
\mc{C}(G)& =\big\{C\in [V]^{<\infty}:C\,\text{ is a \emph{clique} in $G$, i.e. }\,[C]^2\subseteq E\big\},\\
\mc{A}(G)& =\big\{A\in [V]^{<\infty}:A\,\text{ is an \emph{anti-clique} in $G$, i.e. }\,[A]^2\cap E=\0\big\}.
\end{align*}
Given a poset $\PP=(P,\leq)$, let $G_\PP$ be its \emph{comparability graph}, that is, $G_\PP$ lives on $P$ and $\{p,q\}$ is an edge iff $p<q$ or $q<p$. It follows that $\mc{C}(\PP)=\mc{C}(G_\PP)$ and $\mc{A}(\PP)=\mc{A}(G_\PP)$. 

\smallskip
Obviously, if $C\in\mc{C}(G)$ and $A\in\mc{A}(G)$ then $|C\cap A|\leq 1$, moreover, 
\begin{align*}
\mc{C}(G)&=\big\{C\in [V]^{<\infty}:|C\cap A|\leq 1\,\text{ for every }\,A\in\mc{A}(G)\big\}\\
\mc{A}(G)&=\big\{A\in [V]^{<\infty}:|A\cap C|\leq 1\,\text{ for every }\,C\in\mc{C}(G)\big\}.
\end{align*} 

This observation leads to the following construction: Given $\mc{F}\in\mrm{FHC}$ define 
\[ \mc{F}^\perp=\big\{E\in [\NN]^{<\infty}:|E\cap F|\leq 1\,\text{ for every }\,F\in\mc{F}\big\}.\]
Then $\mc{F}^\perp\in\mrm{FHC}$ as well. In general, $\mc{F}^{\perp\perp}\supseteq\mc{F}$ always holds but $\mc{F}^{\perp\perp}$ can be much larger than $\mc{F}$, e.g. with $\mc{S}=\{S\in [\NN]^{<\infty}:S=\0$ or $|S|\leq\min S\}$ we have
\[ \mc{S}^\perp=[\NN]^{\leq 1}\cup\big\{\{1,n\}:n\in\NN\big\}\,\text{ and }\,\mc{S}^{\perp\perp}=\big\{\{1\}\big\}\cup\big[\NN\setminus\{1\}\big]^{<\infty}.\] 
At the same time, $\mc{C}(G)^\perp=\mc{A}(G)$ and $\mc{A}(G)^\perp=\mc{C}(G)$, hence $\mc{C}(G)^{\perp\perp}=\mc{C}(G)$ and $\mc{A}(G)^{\perp\perp}=\mc{A}(G)$ for every graph $G$, moreover, the following holds:  

\begin{fact}\label{graphgen}
Given $\mc{F}\in\mrm{FHC}$, the following are equivalent:
\begin{itemize}\setlength\itemsep{0.1cm}
\item[(i)] $\mc{F}=\mc{E}^\perp$ for some $\mc{E}\in\mrm{FHC}$.
\item[(ii)] $\mc{F}=\mc{F}^{\perp\perp}$.
\item[(iii)] $\mc{F}=\mc{C}(G)$ for some graph $G$ on $\NN$.
\end{itemize}
\end{fact}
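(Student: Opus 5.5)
I will prove the cycle (iii)$\Rightarrow$(i)$\Rightarrow$(ii)$\Rightarrow$(iii). The only tools are the definition of $\perp$ and the observation already made above that $\mc{C}(G)^\perp=\mc{A}(G)$ and $\mc{A}(G)^\perp=\mc{C}(G)$ for every graph $G$.

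\emph{(iii)$\Rightarrow$(i).} If $\mc{F}=\mc{C}(G)$, put $\mc{E}=\mc{A}(G)$. It lies in $\mrm{FHC}$: it is hereditary, and it covers $\NN$ because singletons are anti-cliques. By the observation, $\mc{E}^\perp=\mc{A}(G)^\perp=\mc{C}(G)=\mc{F}$.

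\emph{(i)$\Rightarrow$(ii).} First I check two general facts.
\begin{itemize}
\item[(a)] $\mc{H}\subseteq\mc{H}^{\perp\perp}$ for every $\mc{H}\in\mrm{FHC}$. Take $H\in\mc{H}$ and $E\in\mc{H}^\perp$; then $|E\cap H|\leq1$ by definition of $\mc{H}^\perp$.
\item[(b)] $\perp$ is antitone: $\mc{H}\subseteq\mc{H}'$ implies $\mc{H}'^\perp\subseteq\mc{H}^\perp$.
\end{itemize}
Now let $\mc{F}=\mc{E}^\perp$. Applying (a) to $\mc{E}$ and then (b) gives $\mc{E}^{\perp\perp\perp}\subseteq\mc{E}^\perp$, that is, $\mc{F}^{\perp\perp}\subseteq\mc{F}$. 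Applying (a) to $\mc{F}$ gives the reverse inclusion, so $\mc{F}=\mc{F}^{\perp\perp}$.

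\emph{(ii)$\Rightarrow$(iii).} This is the only step with content. Define the graph $G$ on $\NN$ by declaring $\{a,b\}$ (with $a\neq b$) an edge iff $\{a,b\}\in\mc{F}$. I claim $\mc{C}(G)=\mc{F}$.
\begin{itemize}
\item $\mc{F}\subseteq\mc{C}(G)$: every $F\in\mc{F}$ is finite, and by heredity each of its $2$-subsets lies in $\mc{F}$, so $F$ is a clique.
\item $\mc{C}(G)\subseteq\mc{F}$: since $\mc{F}=\mc{F}^{\perp\perp}$, it suffices to show that a finite clique $C$ satisfies $|C\cap E|\leq1$ for every $E\in\mc{F}^\perp$. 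Suppose $a\neq b$ both lie in $C\cap E$. Then $\{a,b\}$ is an edge, so $\{a,b\}\in\mc{F}$. But $|E\cap\{a,b\}|=2$, contradicting $E\in\mc{F}^\perp$.
\end{itemize}
Note that singletons and $\0$ are handled automatically by the first bullet, since $\mc{F}$ is hereditary and covers $\NN$.

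The expected main obstacle is this last inclusion $\mc{C}(G)\subseteq\mc{F}$. The point to get right is that $\mc{F}^{\perp\perp}$ is determined entirely by pairs. The hypothesis $\mc{F}=\mc{F}^{\perp\perp}$ is exactly what rules out examples like $\mc{S}$, where all pairs of a set lie in the family but the set itself does not.
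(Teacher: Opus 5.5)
Your proof is correct and follows the paper's argument essentially step for step. It uses the same cycle of implications, the identity $\mc{E}^{\perp\perp\perp}=\mc{E}^\perp$ obtained from $\mc{H}\subseteq\mc{H}^{\perp\perp}$ and antitonicity of $\perp$, and the same graph $G=(\NN,[\NN]^2\cap\mc{F})$ for (ii)$\Rightarrow$(iii). The only cosmetic differences are that you verify $\mc{C}(G)\subseteq\mc{F}^{\perp\perp}$ directly where the paper argues by contradiction, and that your closing remark on singletons concerns that inclusion rather than the first bullet (harmless, since your second bullet already covers them).
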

\begin{proof}
(i)$\to$(ii): We know that $\mc{E}^{\perp\perp\perp}\supseteq\mc{E}^\perp$, conversely, if $H\in\mc{E}^{\perp\perp\perp}$ then $|H\cap E|\leq 1$ for every $E\in\mc{E}^{\perp\perp}\supseteq\mc{E}$, hence $H\in\mc{E}^\perp$.

\smallskip
(ii)$\to$(iii): Let $G=(\NN,[\NN]^2\cap\mc{F})$. Obviously, $\mc{F}\subseteq\mc{C}(G)$. Conversely, let $C\in\mc{C}(G)$ and assume on the contrary that $C\notin\mc{F}=\mc{F}^{\perp\perp}$. Then there is an $A\in\mc{F}^\perp$ such that $|C\cap A|>1$. In other words, $C$ contains a pair $\{a,b\}\in\mc{F}^\perp$ but this is impossible because $\{a,b\}$ is an edge and hence $\{a,b\}\in\mc{F}$. 

\smallskip
(iii)$\to$(i): We know that $\mc{C}(G)=\mc{A}(G)^\perp$. 
\end{proof}

\begin{df}
An $\mc{F}\in\mrm{FHC}$ is \emph{graph-generated} if $\mc{F}=\mc{C}(G)$ for some (unique) graph $G$ on $\NN$. 
\end{df}

Clearly, if $\mc{F}=\mc{C}(G)=\mc{A}(G^c)$ is graph-generated, then so is $\mc{F}^\perp=\mc{A}(G)=\mc{C}(G^c)$ where $G^c=(V,[V]^2\setminus E)$ is the \emph{complement} of $G$.

\begin{exa}[$\bigcup\mc{Q}$ and $\mrm{Fh}(\mc{Q})$]\label{farah} 
Obviously, $[\NN]^{\leq 1}=\mc{C}(\NN,\0)$ and $[\NN]^{<\infty}=\mc{C}(\NN,[\NN]^2)$ are graph-generated. The simplest generalizations of these examples are the following constructions: Let $(V_n)$ be a partition of $\NN$ into non-empty finite sets
and $\mc{Q}=(\mc{Q}_n)$ a sequence of families, $\mc{Q}_n\in\mrm{FHC}(V_n)$. Then we can define $\bigcup\mc{Q}:=\bigcup_n\mc{Q}_n\in\mrm{FHC}$ and  the \emph{Farah family generated by} $\mc{Q}$ (see \cite{ZOO}) 
\[ \mathrm{Fh}(\mc{Q}) = \big\{F \in [\NN]^{<\infty}\colon F\cap V_n \in \mc{Q}_n \mbox{ for every }\, n\big\}. \]
Notice that the Farah family generated by $\mc{Q}^\perp = (\mc{Q}^\perp_n)$ coincides with $(\bigcup \mc{Q})^\perp$ 
and vice versa, $\mrm{Fh}(\mc{Q})^\perp=\bigcup\mc{Q}^\perp$. Clearly, $\bigcup\mc{Q}$ is graph-generated iff $\mrm{Fh}(\mc{Q})$ is graph-generated iff every $\mc{Q}_n$ is graph-generated. 

Regarding the generated combinatorial spaces, for arbitrary $\mc{Q}$, $X_{\bigcup\mc{Q}}$ is the $c_0$-product of the sequence $(X_{\mc{Q}_n})$, $Y_{\bigcup\mc{Q}}$ is the $\ell_\infty$-product of this sequence, and $X_{\mrm{Fh}(\mc{Q})}=Y_{\mrm{Fh}(\mc{Q})}$ is the $\ell_1$-product of this sequence.
\end{exa}

\begin{exa}[comparability and co-comparability graphs]\label{comcocom}  Many important examples of graph-generated families are generated by comparability graphs. For example, such graphs are $(\NN,\0)=G_{(\NN,\Delta)}$ where $\Delta=\{(n,n):n\in\NN\}$ is the trivial partial order and its complement $(\NN,[\NN]^2)=G_{(\NN,\leq)}$ where $\leq$ stands for the usual ordering on $\NN$, the associated families are $[\NN]^{\leq 1}=\mc{C}(\NN,\Delta)$ and $[\NN]^{<\infty}=\mc{C}(\NN,\leq)$. It turns out that the complement of $G_{(2^{<\NN},\unlhd)}$ is also a comparability graph, and hence 
\[ \mc{A}(2^{<\NN},\unlhd)=\mc{A}(G_{(2^{<\NN},\unlhd)})=\mc{C}(G^c_{(2^{<\NN},\unlhd)})=\mc{C}(G_{(2^{<\NN},\preceq)})=\mc{C}(2^{<\NN},\preceq)\] for some partial order $\preceq$ on $2^{<\NN}$: For $s,t\in 2^{<\NN}$ define $s\preceq t$ iff $s=t$ or there is a $k\in \dom(s)\cap\dom(t)$ where $s(k)\ne t(k)$ and $s(k)<t(k)$ at the first such $k$ (notice that $\0$ is an `isolated point' with respect to $\preceq$). Then $\preceq$ is indeed a partial order on $2^{<\NN}$ and $G_{(2^{<\NN},\preceq)}=G^c_{(2^{<\NN},\unlhd)}$.

In other words, $G_{(2^{<\NN},\unlhd)}$ is both comparability and co-comparability.\footnote{Probably the simplest example of a graph which is neither comparability nor co-comparability is the $5$-long cycle $C_5$, and the simplest comparability graph which is not co-comparability is $C_6$.} Such graphs have an interesting characterization (see \cite{2dimposet}): A graph $G$ is both comparability and co-comparability iff there is a partial order $(P,\leq)$ such that $G=G_{(P,\leq)}$ and $\leq$ is the intersection of two linear orders on $P$. For example, $\unlhd=(\unlhd\cup\preceq)\cap(\unlhd\cup\succeq)$. 
\end{exa}

\begin{exa}[Perles graph]\label{perles} 
Consider the product of two copies of $(\NN,\leq)$ as a product poset, that is, define $\leq$ on $\NN\times\NN$ as $(n_0,m_0)\leq (n_1,m_1)$ iff $n_0\leq n_1$ and $m_0\leq m_1$. First of all, notice that $\leq$ is also the intersection of two linear orders: Let $(n_0,m_0)\leq_x(n_1,m_1)$ iff $n_0<n_1$ \underline{or} $n_0=n_1$ and $m_0\leq m_1$, and similarly, let $(n_0,m_0)\leq_y(n_1,m_1)$ if $m_0<m_1$ \underline{or} $m_0=m_1$ and $n_0\leq n_1$. Clearly, $\leq_x$ and $\leq_y$ are linear orders and $\leq=\leq_x\cap\leq_y$. 

We know (see \cite{Perles} for more general results) that $(\NN^2,\leq)$ contains arbitrary large finite anti-chains, e.g. $\{(1,n),(2,n-1),\dots,(n,1)\}\in\mc{A}_2=\mc{A}(\NN^2,\leq)$ but it does not contain infinite anti-chains: Indeed, assume that $A=\{(n_k,m_k):k\in\NN\}$ is an infinite anti-chain. Then $n_k\ne n_{k'}$ for $k\ne k'$, hence we can assume that $n_1<n_2<\dots$ but then $m_1>m_2>\dots$, a contradiction. It follows that $\mc{A}_2$ is compact i.e. $X_{\mc{A}_2}$ is $c_0$-saturated
\end{exa}

Regarding the last example, we are more interested in the space $X_{\mc{C}_2}$ where $\mc{C}_2=\mc{C}(\mbb{N}^2,\leq)$ because, as we will show below, it is an example of a combinatorial space distinguishing two properties which are rather close to each other. Recall that a Banach space has a Schur property if each weakly converging sequence converges in norm. For a combinatorial space $X_\mc{F}$ this holds  (see \cite[Theorem 3.3]{ZOO}) iff
\begin{itemize}\setlength\itemsep{0.1cm}
	\item[$(S)$] for every $\mc{F}$-supported normalized block basic sequence $(x_n)$ in $X_{\mc{F}}$ there is a $D\in\overline{\mc{F}}$ such that $\limsup_{n\to\infty}\|P_D(x_n)\|_\mc{F}>0$.
\end{itemize}
We say that $X_\mc{F}$ has combinatorial Schur property (defined only for combinatorial spaces, see \cite[page 109]{ZOO}) if
\begin{itemize}\setlength\itemsep{0.1cm}
\item[$(S^*)$] for every pairwise disjoint $(F_n)$ in $\mc{F}$ there is a finite $\mc{D}\subseteq\overline{\mc{F}}$ such that $\bigcup\mc{D}$ covers infinitely many $F_n$.
\end{itemize}

This is interesting because $(S^*)$ clearly implies $(S)$ and so far no countexample for the reverse implication was known.

\begin{prop} The family $\mc{C}_2$ satisfies $(S)$ but not $(S^*)$.
\end{prop}
\begin{proof}
$(S)$ holds: Fix a $\mc{C}_2$-supported normalized block basic sequnce $x_n$, that is, $\supp(x_n)$ are pairwise disjoint finite chains in $(\NN^2,\leq)$ and $\|x_n\|=\|x_n\|_{\mc{C}_2}=1$ for every $n$. 
We can assume that neither $D_m=\{m\}\times\NN$ nor $D^m=\NN\times\{m\}$ work as $D$ in the definition of $(S)$ for any $m$, that is, 
\[\tag{$\sharp$} \|P_{D_m}(x_n)\|\xrightarrow{n\to\infty}0\,\text{ and }\,\|P_{D^m}(x_n)\|\xrightarrow{n\to\infty}0\,\text{ for every $m$.}\] 
By recursion we will construct $n_1<n_2<\dots$ and $C_k\subseteq \supp(x_{n_k})$ such that $D=\bigcup_kC_k$ is an infinite chain in $(\NN^2,\leq)$ and $\|P_D(x_{n_k})\|=\|P_{C_k}(x_{n_k})\|>1-2^{-k}$.

Let $n_1=1$, $C_1=\supp(x_1)$, and assume the we have already constructed $n_k$ and $C_k$ such that $\|P_{C_m}(x_{n_m})\|>1-2^{-m}$ for every $m=1,\dots,k$ and $C'_k=C_1\cup C_2\cup\dots\cup C_k\in\mc{C}_2$. Fix an $M$ such that $C'_k\subseteq \{1,\dots,M\}^2$ and let $H=D_1\cup \dots\cup D_M\cup D^1\cup\dots\cup D^M$. Applying $(\sharp)$, if $n$ is large enough, then $\|P_H(x_n)\|<2^{-k-1}$, pick such an $n_{k+1}>n_k$ and let $C_{k+1}=\supp(x_{n_{k+1}})\setminus H$. 

\smallskip
$(S^*)$ fails: Consider $C_n=\{n\}\times \{1,2,\dots,n\}\in\mc{C}_2$, $n\in\NN$. Given an infinite $\{n_1<n_2<\dots\}\subseteq\NN$, as $\bigcup_kC_{n_k}$ contains arbitrary long anti-chains, e.g. 
\[\big\{(n_m,m),(n_{m+1},m-1),(n_{m+2},m-2),\dots,n_{2m-1},1)\big\},\] 
$\bigcup_kC_{n_k}$ cannot be cover by finitely many elements of $\overline{\mc{C}}_2$.
\end{proof}

\begin{exa}[Sierpi\'nski graph]\label{sierpinski} 
Consider the intersection of $\NN$'s order and (a copy of) $\mbb{Q}$'s order: Fix a bijection $f\colon \mathbb{N} \to \mathbb{Q}$ and define $n\leq_f k$ iff $n\leq k$ and $f(n)\leq f(k)$. Then $\leq_f$ is a partial order on $\NN$ and its comparability graph $G_f=G_{(\NN,\leq_f)}$ is also a co-comparability graph (see Example \ref{comcocom}).  

It follows that $\mc{C}(G_f)$ consists of those sets $\{n_1 < \dots < n_k\}$ for which $f(n_1) < \dots < f(n_k)$ and $\mc{A}(G_f)$ of those sets whose `image' is decreasing. The idea of this example is based on Sierpi\'nski's coloring from Ramsey theory (see \cite{sier33}). We will investigate the spaces of the form $X_{\mc{C}(G_f)}$ in Section \ref{sier1}. 
\end{exa}

\section{Geometric duality}\label{geomdual}

As stated in \cite{BangOdell}, there is a duality between the combinatorial spaces induced by $\mc{A}=\{$finite anti-chains in $2^{<\NN}\}$ and $\mc{C}=\{$finite chains in $2^{<\NN}\}$, namely, 
\[ X_\mathcal{A}^*=Y_\mathcal{C}\,\text{ and }\,X_\mathcal{C}^*=Y_\mathcal{A}\]
where, as mentioned above, we consider $X_\mc{F}^*\subseteq\mbb{R}^\NN$ along the coordinate-functionals $(e^*_{\mc{F},n})$, and  equality between these spaces means that the norms coincide. Equivalently (see below), the basic sequences $(e^*_{\mc{A},n})$ and $(e_{\mc{C},n})$ are isometrically equivalent, just like $(e^*_{\mc{C},n})$ and $(e_{\mc{A},n})$. In this section we study this phenomenon and characterize all such pairs of families. Isometrical equivalence of basic sequences will sometimes be denoted by $(a_n)\stackrel{i}{\sim}(b_n)$. 

\begin{df} We say that $\mc{F}\in\mrm{FHC}$ is the \emph{geometric dual} of $\mc{H}\in\mrm{FHC}$ if $Y_\mc{F}=X^*_\mc{H}$.
\end{df}

First of all, we reformulate the definition of duality in terms of the canonical basic sequences, show that orthogonality of families is a necessary condition of geometric duality, and that this notion is symmetric:

\begin{prop}\label{geomsym}
Given $\mc{F},\mc{H}\in\mrm{FHC}$, $\mc{F}$ is the geometric dual of $\mc{H}$ iff $(e_{\mc{F},n})$ and $(e^*_{\mc{H},n})$ are isometrically equivalent. Furthermore, in this case, 
$\mc{H}=\mc{F}^\perp$ and $\mc{H}$ is the geometric dual of $\mc{F}$ as well, hence $\mc{F}=\mc{H}^\perp$ (and so $\mc{F}$ and $\mc{H}$ are graph-generated).
\end{prop}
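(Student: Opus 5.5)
We split the proof into three steps: the reformulation in terms of basic sequences, orthogonality, and symmetry. For the reformulation, we first note that the statement $Y_\mc{F}=X^*_\mc{H}$ means the norms $\|\bullet\|_\mc{F}$ and $\|\bullet\|^*_\mc{H}$ coincide on $\mbb{R}^\NN$ as $[0,\infty]$-valued functions. If they coincide, they coincide on $c_{00}$, which is exactly $(e_{\mc{F},n})\stackrel{i}{\sim}(e^*_{\mc{H},n})$. Conversely, for every $y\in\mbb{R}^\NN$ we have $\|y\|_\mc{F}=\sup_n\|P_{[1,n]}(y)\|_\mc{F}$, since elements of $\mc{F}$ are finite. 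Also, for $\al\in\mbb{R}^\NN$, $\|\al\|^*_\mc{H}=\sup_n\|P_{[1,n]}(\al)\|^*_\mc{H}$, because $c_{00}$ is dense in $X_\mc{H}$ and the sequence is nondecreasing by $1$-unconditionality. Hence agreement on $c_{00}$ propagates to all of $\mbb{R}^\NN$.

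For orthogonality, assume $(e_{\mc{F},n})\stackrel{i}{\sim}(e^*_{\mc{H},n})$. To show $\mc{H}\subseteq\mc{F}^\perp$, suppose $E\in\mc{H}$ and $a\ne b$ lie in $E\cap F$ for some $F\in\mc{F}$. Then $\|e_a+e_b\|_\mc{F}=2$. On the other hand, $\{a,b\}\in\mc{H}$ by heredity, so for $x\in B(X_\mc{H})$ we get $|x(a)+x(b)|\le |x(a)|+|x(b)|\le\|x\|_\mc{H}\le1$, which gives $\|e^*_a+e^*_b\|^*_\mc{H}\le1$, a contradiction. To show $\mc{F}^\perp\subseteq\mc{H}$, take a nonempty $E\in\mc{F}^\perp$. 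Since $|E\cap F|\le1$ for all $F\in\mc{F}$, we get $\|\chi_E\|_\mc{F}=1$, hence $\|\chi_E\|^*_\mc{H}=1$. Testing this functional on $x=\chi_E$ gives $|E|\le\|\chi_E\|_\mc{H}$. The supremum defining $\|\chi_E\|_\mc{H}$ is $\max\{|E\cap H|:H\in\mc{H}\}\le|E|$, so some $H\in\mc{H}$ satisfies $E\subseteq H$, and heredity gives $E\in\mc{H}$.

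For symmetry, we pass to finite dimensions. Let $X^n_\mc{H}=\spa\{e_{\mc{H},1},\dots,e_{\mc{H},n}\}$, and define $X^n_\mc{F}$ analogously. Since $P_{[1,n]}$ is a norm-one projection on $X_\mc{H}$, the norm of $\sum_{i\le n}\al(i)e^*_{\mc{H},i}$ in $X^*_\mc{H}$ equals its norm as a functional on $X^n_\mc{H}$. So the hypothesis says that $X^n_\mc{F}$, with the coordinatewise pairing, is isometrically the dual of $X^n_\mc{H}$. By finite-dimensional reflexivity (bipolar duality), $X^n_\mc{H}$ is then isometrically the dual of $X^n_\mc{F}$ under the same pairing. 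The same norm-one projection argument in $X_\mc{F}$ gives $(e_{\mc{H},n})_{n\le N}\stackrel{i}{\sim}(e^*_{\mc{F},n})_{n\le N}$ for every $N$, hence for the whole sequences. By the first step, $\mc{H}$ is the geometric dual of $\mc{F}$, and the second step with the roles exchanged yields $\mc{F}=\mc{H}^\perp$. Finally, Fact \ref{graphgen}, via (i)$\to$(iii), shows that both families are graph-generated.

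The only delicate point is the identification of the restricted dual norm with the finite-dimensional dual norm, which is what makes the reflexivity argument legitimate. This rests entirely on $1$-unconditionality, that is, on $\|P_{[1,n]}\|\le1$ in both spaces. The remaining steps are direct computations with two-point sets and indicator vectors.
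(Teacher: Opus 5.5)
Your proof is correct and follows the paper's argument step for step: the same reformulation via $\sup_n\|P_{[1,n]}(\bullet)\|$ (you get membership in $X^*_\mc{H}$ from density of $c_{00}$, where the paper uses a subsequence/Cauchy argument), the same two-point and indicator-vector tests for $\mc{H}=\mc{F}^\perp$ (testing against $\chi_E$ directly is just the contrapositive of the paper's $\chi_S/(|S|-1)$), and the same finite-dimensional Hahn--Banach duality for the symmetry. The only repackaging is that you replace the paper's general claim $(b_n)\stackrel{i}{\sim}(b^{**}_n)$ by explicit truncation with the norm-one projections $P_{[1,n]}$ plus bipolar duality in $\mbb{R}^n$; this has the small advantage of making explicit that monotonicity of the basis is what allows the Hahn--Banach extension to be taken inside $[e^*_{\mc{H},n}]$, a point the paper's proof of the claim passes over.
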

\begin{proof}
To show the non-trivial implication in the first statement, assume that $(e_{\mc{F},n})\stackrel{i}{\sim}(e^*_{\mc{H},n})$. As \[ Y_\mc{F}=\Big\{y\in\mbb{R}^\NN:\sup_n\|P_{\{1,\dots,n\}}(y)\|_\mc{F}<\infty\Big\} \] and $\|y\|_\mc{F}=\sup_n\|P_{\{1,\dots,n\}}(y)\|_\mc{F}$, we need to show that 
\[ X^*_\mc{H}=\Big\{\al\in\mbb{R}^\NN:\sup_n\big\|P_{\{1,\dots,n\}}(\al)\big\|^*_\mc{H}<\infty\Big\}\,\text{ and }\,\|\al\|^*_\mc{H}=\sup_n\big\|P_{\{1,\dots,n\}}(\al)\big\|^*_\mc{H}.\]
Now, $X^*_\mc{H}$ is certainly contained in the set on the right side. Conversely, let $\al\in\mbb{R}^\NN$ be such that $\sup_n\|P_{\{1,\dots,n\}}(\al)\|^*_\mc{H}<\infty$, and let $\al_n=P_{\{1,\dots,n\}}(\al)$. Given an $x\in X_\mc{H}$, as
$(\al_n(x))$ is bounded, it has a convergent subsequence $\al_{n_k}(x)\xrightarrow{k\to\infty}a$, and it is straightforward to show that $\al_n(x)\xrightarrow{n\to\infty}a$ follows, that is, $\al(x)=a\in\mbb{R}$ is well-defined; and this implies that $\|\al\|^*_\mc{H}=\sup_n\|\al_n\|^*_\mc{H}$ also holds. 

\smallskip
$\mc{H}\subseteq\mc{F}^\perp$ because otherwise there is a pair $\{n,k\}\in\mc{H}\cap\mc{F}$ and hence $\|e_{\mc{F},n}+e_{\mc{F},k}\|_\mc{F}=2$ but $\|e^*_{\mc{H},n}+e^*_{\mc{H},k}\|^*_\mc{H}=1$. $\mc{F}^\perp\subseteq\mc{H}$ because if there is an $S\in\mc{F}^\perp\setminus\mc{H}$, then $\|\sum_{n\in S}e_{\mc{F},n}\|_\mc{F}=1$ but with $x=\chi_S/(|S|-1)\in B(X_\mc{H})$ (where $\chi_S$ stands for the characteristic function of $S$) we have $(\sum_{n\in S}e^*_{\mc{H},n})(x)=|S|/(|S|-1)$ and hence $\|\sum_{n\in S}e^*_{\mc{H},n}\|^*_\mc{H}>1$.

\smallskip
Before proving that $\mc{H}$ is the geometric dual of $\mc{F}$, let us show the following:
\begin{nnclaim}
Let $(b_n)$ be a basis in a Banach space $X$ with bi-orthogonal functionals $(b^*_n)$, and let $b^{**}_n\in [b^*_n]^*$ be the bi-orthogonal functional to $b^*_n$. Then $(b_n)$ and $(b_n)^{**}$ are isometrically equivalent.
\end{nnclaim}
\begin{proof}[Proof of the claim]
Fix a finitely supported $x=\sum_{n=1}^mc(n)b_n\in X$ and let $x^{**}=\sum_{n=1}^mc(n)b^{**}_n$. 

\smallskip
$\|x^{**}\|\leq \|x\|$: If $f\in B([b^*_n])$, then $|x^{**}(f)|=|f(x)|\leq \|f\| \|x\|\leq \|x\|$. 

\smallskip
$\|x\|\leq \|x^{**}\|$: Consider $x$ as a vector in a finite dimensional space $[b_1,\dots,b_m]$ and pick a functional $f\in B([b_1,\dots,b_m]^*)=B([b_1^*,\dots,b_m^*])$ with $\|x\|=|f(x)|$. By the Hahn-Banach theorem, extend $f$ to a functional $f'\in B(X^*)$ and note that $\|x^{**}\|\geq|x^{**}(f')|=|f'(x)|=|f(x)|=\|x\|$.
\end{proof}

Applying the claim, $(e_{\mc{H},n})\stackrel{i}{\sim}(e^{**}_{\mc{H},n})$; and combining this with $(e^*_{\mc{H},n})\stackrel{i}{\sim}(e_{\mc{F},n})$, we obtain that $(e_{\mc{H},n})\stackrel{i}{\sim}(e^{**}_{\mc{H},n})\stackrel{i}{\sim}(e^*_{\mc{F},n})$.
\end{proof}

At this point we have to recall some notions and notations from graph theory. Given an arbitrary graph $G=(V,E)$ and a non-empty $H\subseteq V$, let us denote $G\clrest H:=(H,E\cap [H]^2)$ the \emph{subgraph of $G$ induced by $H$}. The \emph{chromatic number} $\chi(G)$ of $G$ is the minimal number of anti-cliques required to cover $V$, and the \emph{clique number} $\om(G)$ of $G$ is the supremum of the sizes of its cliques. Obviously, $\chi(G)\geq \om(G)$. We say that $G$ is \emph{perfect} if $\chi(G\clrest H)=\om(G\clrest H)$ for every finite $H\subseteq G$. There are some other reasonable ways to define infinite perfect graphs but this one is exactly what we need. For a survey on many known classes of (finite) perfect graphs see \cite{pftclasses}, for now, mainly because of our examples in the previous section, let us only mention the following well-known result:

\begin{prop}\label{comppft} \emph{(Mirsky's / dual Dilworth theorem)} Comparability graphs are perfect.
\end{prop}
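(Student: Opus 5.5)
Since perfection is defined through finite induced subgraphs, and an induced subgraph of a comparability graph is again a comparability graph (of the restricted order), it suffices to show that $\chi(G_{\PP})=\om(G_{\PP})$ for every finite poset $\PP=(P,\leq)$. In $G_\PP$, cliques are chains and anti-cliques are anti-chains. So the statement reduces to Mirsky's theorem: if the longest chain in a finite poset has $k$ elements, then $P$ can be partitioned into $k$ anti-chains. The inequality $\chi\geq\om$ holds trivially, so only a covering by $k$ anti-chains has to be constructed.

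For the construction, I will use the height function. For $p\in P$, let $h(p)$ be the maximal size of a chain in $\PP$ whose largest element is $p$. Then $1\leq h(p)\leq k$ for every $p$. Set $A_i=\{p\in P:h(p)=i\}$ for $i=1,\dots,k$; these sets partition $P$. To see that each $A_i$ is an anti-chain, suppose $p<q$. Any maximal chain ending at $p$ extends by $q$ to a longer chain ending at $q$, so $h(q)\geq h(p)+1$ and $p,q$ lie in different $A_i$. Hence $\{A_1,\dots,A_k\}$ is a cover of $V$ by $k$ anti-cliques, which gives $\chi(G_\PP)\leq k=\om(G_\PP)$.

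Finally, the hereditary step has to be stated explicitly. For a finite $H\subseteq P$, the graph $G_\PP\clrest H$ equals $G_{(H,\leq\cap H^2)}$, so the finite case applies to it directly. No step here is a real obstacle. The only point needing care is checking that the height function is well-defined and bounded by $k$, and this holds because $H$ is finite, so every chain is finite and has length at most $\om$.
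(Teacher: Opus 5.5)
Your argument is correct. The reduction to finite induced subgraphs (which are comparability graphs of the restricted orders), followed by partitioning the poset into $\om$ many anti-chains via the height function, is the standard proof of Mirsky's theorem; the paper cites that theorem without proving it. One wording fix: at $p<q$ you should extend a chain of maximum size $h(p)$ with top element $p$ by $q$, rather than a ``maximal chain'', which in poset language means a non-extendable chain.
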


The study of perfect graphs has a long and incredibly rich history reaching its apex with the renowned strong perfect graph theorem:

\begin{thm}\label{strong-perfect} \emph{(Chudnovsky, Robertson, Seymour, and Thomas \cite{Chudnovsky})} A graph $G$ is perfect if, and only if, it contains neither odd holes, that is, induced subgraphs isomorphic to an odd cycle of length $\geq 5$, nor odd antiholes, that is, induced subgraphs isomorphic to the
complement of an odd cycle of length $\geq 5$. 
\end{thm}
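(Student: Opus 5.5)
The plan is to treat the two directions separately, first reducing to finite graphs. Perfection of a graph $G$ on $\NN$ is defined through its finite induced subgraphs, and odd holes and odd antiholes are themselves finite induced subgraphs. So both sides of the equivalence in Theorem \ref{strong-perfect} only involve finite induced subgraphs of $G$, and it suffices to prove the statement for finite graphs.

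\emph{Necessity} is elementary. Perfection passes to induced subgraphs by definition, so it suffices to check that odd holes and odd antiholes are imperfect. For the odd cycle $C_{2k+1}$ with $k\geq 2$, we have $\om(C_{2k+1})=2$, while an odd cycle is not bipartite, so $\chi(C_{2k+1})=3$. For the complement $C^c_{2k+1}$, a clique of $C^c_{2k+1}$ is an anti-clique of $C_{2k+1}$, so $\om(C^c_{2k+1})=k$. On the other hand, every anti-clique of $C^c_{2k+1}$ is a clique of $C_{2k+1}$ and hence has at most $2$ vertices. Covering $2k+1$ vertices therefore needs at least $k+1$ anti-cliques, so $\chi(C^c_{2k+1})\geq k+1>\om(C^c_{2k+1})$.

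\emph{Sufficiency} is the deep part, and I would follow the decomposition strategy of Chudnovsky, Robertson, Seymour, and Thomas. Call a finite graph with no odd hole and no odd antihole a \emph{Berge graph}. Suppose some Berge graph is imperfect, and take $G$ to be a minimal imperfect Berge graph. The argument then has two steps.
\begin{itemize}
\item[(1)] \emph{Structure theorem.} Every Berge graph is either basic, or it admits one of the following decompositions: a proper $2$-join, a proper $2$-join in its complement, a proper homogeneous pair, or a balanced skew partition. Here ``basic'' means one of five classes: bipartite graphs, complements of bipartite graphs, line graphs of bipartite graphs, complements of such line graphs, and double split graphs.
\item[(2)] \emph{Minimal counterexamples do not decompose.} A minimal imperfect Berge graph admits none of the four decompositions in (1).
\end{itemize}
Basic graphs are perfect by classical results: K\"onig's theorems for the bipartite and line-graph classes, together with Lov\'asz' perfect graph theorem for the complements, and a direct check for double split graphs. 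Hence (1) and (2) together give a contradiction.

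For step (2), I would first invoke Lov\'asz' weak perfect graph theorem: if $G$ is minimal imperfect then so is $G^c$. This lets each decomposition be handled together with its complementary version. The $2$-join case is due to Cornu\'ejols and Cunningham, and homogeneous pairs are excluded by Chv\'atal and Sbihi. Balanced skew partitions require the newer lemma that a minimal imperfect graph admits no balanced skew partition; the proof goes through an analysis of parity of paths, using Roussel--Rubio type results on odd paths.

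The main obstacle is step (1). The plan there is a case analysis on which special configuration $G$ or $G^c$ contains: a line graph of a bipartite subdivision of $K_4$, a prism, or a wheel. In each case one grows the configuration maximally and shows that the remaining vertices attach in a way that forces either a basic structure or one of the decompositions. The tools would be the ``Roussel--Rubio lemma'' and a careful study of how anticonnected sets attach to the configuration. I expect this to be the genuinely long part of the argument, and would not attempt to shorten it here.
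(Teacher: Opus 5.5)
Your proposal is correct and follows the same approach as the paper: the paper gives no proof of this theorem, only a citation of Chudnovsky--Robertson--Seymour--Thomas, and your sufficiency direction is an accurate roadmap of exactly that proof (the structure theorem for Berge graphs, plus the fact that a minimal imperfect graph admits no proper $2$-join in $G$ or $G^c$, no proper homogeneous pair, and no balanced skew partition), with the long structural step (1) deferred to the source just as the paper defers it. The parts you carry out yourself are also correct: the reduction to finite graphs, which is needed because the paper defines perfection of infinite graphs through finite induced subgraphs, and the elementary necessity computations $\chi(C_{2k+1})=3>2=\om(C_{2k+1})$ and $\chi(C^c_{2k+1})\geq k+1>k=\om(C^c_{2k+1})$.
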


At the same time, we will not need this canon below, for us the crucial result is Chv\'atal's following characterization:

\begin{thm}\label{stab} \emph{(see \cite{Chvatal})} A finite graph $G=(V,E)$ is perfect iff 
\[ \mrm{conv}\big\{\chi_A\in\mbb{R}^V:A\in\mc{A}(G)\big\}=\Big\{x\in[0,1]^V:\sum_{v\in C}x(v)\leq 1\,\text{ for every }\,C\in\mc{C}(G)\Big\}.\]
\end{thm}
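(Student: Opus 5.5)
The plan is to prove the two implications separately, working throughout with $\mathrm{STAB}(G)=\mrm{conv}\{\chi_A:A\in\mc{A}(G)\}$ and $\mathrm{QSTAB}(G)=\{x\in[0,1]^V:\sum_{v\in C}x(v)\leq 1 \text{ for all } C\in\mc{C}(G)\}$. The inclusion $\mathrm{STAB}(G)\subseteq\mathrm{QSTAB}(G)$ is trivial, since $|A\cap C|\leq 1$ whenever $A\in\mc{A}(G)$ and $C\in\mc{C}(G)$.

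\emph{Perfect $\Rightarrow$ equality.} Both polytopes are compact. $\mathrm{QSTAB}(G)$ is cut out by finitely many rational inequalities, so its rational points are dense in it. Hence it suffices to show that every rational $x\in\mathrm{QSTAB}(G)$ lies in $\mathrm{STAB}(G)$.

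Write $x=y/N$ with $y\in\NN_0^V$ and $N\in\NN$. Build the graph $G'$ by replacing each vertex $v$ with $y(v)$ pairwise \emph{non-adjacent} copies; vertices with $y(v)=0$ are deleted. Copies of $u\neq v$ are adjacent iff $\{u,v\}\in E$.

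\begin{itemize}
\item A clique of $G'$ contains at most one copy of each vertex, and it projects to a clique $C$ of $G$. So its size is at most $\sum_{v\in C}y(v)\leq N$, which gives $\om(G')\leq N$.
\item Every induced subgraph of $G'$ is obtained from some induced subgraph $G\clrest H$ by this kind of non-adjacent duplication. Giving each copy the colour of its original shows that $\chi$ of it is at most $\chi(G\clrest H)=\om(G\clrest H)$. This is at most its own clique number, because $G\clrest H$ embeds in it. Hence $G'$ is perfect.
\item Consequently $V(G')$ is covered by at most $N$ anti-cliques, and we may take them disjoint. Padding with empty sets, we get exactly $N$ anti-cliques $A'_1,\dots,A'_N$.
\item Their projections $A_1,\dots,A_N$ are anti-cliques of $G$, since copies of the same vertex are non-adjacent. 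Each copy lies in exactly one class, so $\sum_i\chi_{A_i}=y$.
\end{itemize}

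Therefore $x=\frac1N\sum_i\chi_{A_i}\in\mathrm{STAB}(G)$.

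\emph{Equality $\Rightarrow$ perfect.} First I check that the hypothesis is inherited by induced subgraphs $G\clrest H$. The set $\{x\in\mathrm{QSTAB}(G):x(v)=0 \text{ for } v\notin H\}$ is a face of $\mathrm{QSTAB}(G)=\mathrm{STAB}(G)$ and is naturally identified with $\mathrm{QSTAB}(G\clrest H)$. Its vertices are vertices of $\mathrm{STAB}(G)$, i.e.\ of the form $\chi_A$, so they are indicators of anti-cliques of $G\clrest H$. This gives $\mathrm{QSTAB}(G\clrest H)=\mathrm{STAB}(G\clrest H)$.

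It therefore suffices to prove $\chi(G)\leq\om(G)$ for every $G$ satisfying the hypothesis, by induction on $|V|$. The key step is to find an anti-clique $A$ meeting every maximum clique. Then $\om(G\clrest(V\setminus A))=\om(G)-1$, or $V=A$ and we are done. The induction hypothesis applied to $G\clrest(V\setminus A)$ then gives a colouring with $\om-1$ colours, and adding $A$ as one more colour gives $\om$ colours.

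To find $A$, let $\om=\om(G)$ and consider
\[
F=\Big\{x\in\mathrm{QSTAB}(G):\sum_{v\in C}x(v)=1 \text{ for every clique } C \text{ with } |C|=\om\Big\}.
\]
This is a face of $\mathrm{QSTAB}(G)$, being defined by equality in valid inequalities. It is non-empty because it contains $\frac1\om\chi_V$, which lies in $\mathrm{QSTAB}(G)$ since every clique has size at most $\om$. A non-empty face of a polytope has a vertex, and that vertex is a vertex of $\mathrm{STAB}(G)$, hence equal to $\chi_A$ for some $A\in\mc{A}(G)$. Membership in $F$ says $|A\cap C|=1$ for every maximum clique $C$, as required.

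I expect the only delicate points to be the two polytope-face arguments: that faces of $\mathrm{QSTAB}$ inherit integral vertices, and the non-emptiness of $F$. The duplication step is elementary precisely because the copies are made non-adjacent, so Lov\'asz's replication lemma is not needed.
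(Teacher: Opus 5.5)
The direction ``equality $\Rightarrow$ perfect'' is fine and standard. You inherit the hypothesis to induced subgraphs through the coordinate face, and a vertex of the face $F$ of maximum-clique equalities gives an anti-clique meeting every maximum clique. (The paper gives no proof of this theorem; it only cites Chv\'atal.) The gap is in the other direction, at ``each copy lies in exactly one class, so $\sum_i\chi_{A_i}=y$''. You made the copies of a vertex pairwise non-adjacent, so several copies of the same $v$ may fall into the same colour class $A'_i$. The projection $A_i$ then counts $v$ only once, and all you get is $\sum_i\chi_{A_i}\le y$ coordinatewise. For instance, let $G$ be a single edge $uv$, $x=(2/3,1/3)\in\mathrm{QSTAB}(G)$, $N=3$, $y=(2,1)$. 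Then $G'$ is the path $u_1v_1u_2$. The colouring $\{u_1,u_2\},\{v_1\},\emptyset$ is exactly what your procedure may return, and it yields $\frac13(\chi_{\{u\}}+\chi_{\{v\}})=(1/3,1/3)\ne x$. A warning sign is that with non-adjacent copies the clique inequalities barely enter: $\omega(G')\le\omega(G)$ whatever $y$ is.

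What you actually need is an $N$-colouring of $G'$ in which the copies of each $v$ receive \emph{distinct} colours. Equivalently, you need a colouring of the graph $G_y$ in which the $y(v)$ copies of $v$ form a clique (true twins). For $G_y$ the clique inequalities give exactly $\omega(G_y)=\max_C\sum_{v\in C}y(v)\le N$. Each colour class contains at most one copy of each vertex, so the projections do sum to $y$. But now the perfection of $G_y$ is no longer trivial. It is Lov\'asz's replication lemma, which is precisely the non-trivial combinatorial ingredient in the standard proof of this direction, so your remark that the lemma is avoided is backwards. You can include its short proof for a true twin $v'$ of $v$:
\begin{itemize}
\item If $v$ lies in a maximum clique, give $v'$ a new colour.
\item Otherwise, fix an $\omega$-colouring of $G$ and let $A$ be the class of $v$. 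Every maximum clique meets $A$ in a vertex other than $v$.
\item Hence the induced subgraph on $V\setminus(A\setminus\{v\})$ has clique number at most $\omega-1$. By perfection it has an $(\omega-1)$-colouring.
\item Use the last colour on the anti-clique $(A\setminus\{v\})\cup\{v'\}$.
\end{itemize}
Since induced subgraphs of a replication are (isomorphic to) induced subgraphs of $G$ or replications of induced subgraphs of $G$, you can iterate. As a sanity check: the paper derives the perfect graph theorem from this statement together with duality arguments. A proof of this direction using only false-twin duplication, which trivially preserves perfection, would have made Lov\'asz's theorem nearly free.
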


It turned out that this result was an important ingredient in the proof of Lov\'asz' perfect graph theorem (see \cite{Lovasz1}), see also our remarks and Corollary \ref{perfect-graph-theorem} below.  

\smallskip
As stated in the introduction, the main result of this section is the equivalence between (0) and (5) in the next theorem.  
Given a family $\mc{F}\in\mrm{FHC}$ and a non-empty $Y\subseteq\NN$, let $\mc{F}\clrest Y=\mc{F}\cap\mc{P}(Y)=\{F\in\mc{F}:F\subseteq Y\}$.

\begin{thm}\label{conv} 
Given $\mc{F}\in\mrm{FHC}$, considering $X_{\mc{F}}\subseteq X^{**}_{\mc{F}}$ and the weak$^*$ topology on $X^{**}_\mc{F}$, the following are equivalent:
\begin{itemize}\setlength\itemsep{0.1cm}
\item[(0)] $\mc{F}$ and $\mc{F}^\perp$ are geometric duals.
\item[(1)] $\mrm{Ext}(B(X_{\mc{F}\upharpoonright V}))=W(\max(\mc{F}^\perp\clrest V))$ for every non-empty finite $V\subseteq\NN$.
\item[(2)] $B(X_{\mc{F}})\cap c_{00}=\mrm{conv}(W(\mc{F}^\perp))$.
\item[(3)] $B(X_{\mc{F}})=\overline{\mrm{conv}}^{\|\bullet\|_{\mc{F}}}(W(\mc{F}^\perp))$.
\item[(4)] $B(X^{**}_{\mc{F}})=\overline{\mrm{conv}}^{w^*}(W(\mc{F}^\perp))$. 

\item[(5)] $\mc{F}=\mc{C}(G)$ is generated by a perfect $G$. 
\end{itemize}
\end{thm}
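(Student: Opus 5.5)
The plan is to reduce every condition to finite $V\subseteq\NN$ and then apply Chv\'atal's Theorem \ref{stab} to the unconditional polytope $B(X_{\mc{F}\upharpoonright V})$. For finite $V$, $B(X_{\mc{F}\upharpoonright V})=\{x\in\mbb{R}^V:\sum_{v\in F}|x(v)|\leq 1$ for every $F\in\mc{F}\upharpoonright V\}$ is a polytope, and it is invariant under coordinate sign changes. Its intersection with $[0,\infty)^V$ is exactly the right-hand side of Theorem \ref{stab} when $\mc{F}\upharpoonright V=\mc{C}(G\upharpoonright V)$. The coordinate projection $P_V$ is a norm-one map on $X_\mc{F}$ and maps $W(\mc{F}^\perp)$ into itself. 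Hence (2) is equivalent to the statement that for every finite $V$,
\[ B(X_{\mc{F}\upharpoonright V})=\mrm{conv}\big(W(\mc{F}^\perp\upharpoonright V)\big). \tag{$\ast$}\]
I will prove the cycle (5)$\Rightarrow$(2)$\Rightarrow$(5), and attach (0), (1), (3), (4) to (2).

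For (5)$\Rightarrow$(2), let $\mc{F}=\mc{C}(G)$ with $G$ perfect, so $\mc{F}^\perp=\mc{A}(G)$. Take $x\in B(X_\mc{F})\cap c_{00}$ and let $V=\supp(x)$. By Theorem \ref{stab} applied to $G\upharpoonright V$, $|x|$ is a convex combination of vectors $\chi_A$ with $A\in\mc{A}(G\upharpoonright V)$. Multiplying each $\chi_A$ by the sign pattern of $x$ keeps it in $W(\mc{A}(G))$, and it expresses $x$ itself as such a convex combination. The inclusion $\supseteq$ holds because each $\sigma\in W(\mc{F}^\perp)$ meets every $F\in\mc{F}$ in at most one point, so $\|\sigma\|_\mc{F}\leq 1$.

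For (2)$\Rightarrow$(5), I first show that $\mc{F}$ is graph-generated; I expect this to be the only genuinely non-formal step. Let $G$ be the graph whose edges are the pairs belonging to $\mc{F}$; clearly $\mc{F}\subseteq\mc{C}(G)$. Suppose some clique $C$ of $G$ is not in $\mc{F}$, and take $C$ minimal. Then $|C|\geq 3$, and every proper subset of $C$ is in $\mc{F}$. Since $\mc{F}$ is hereditary, every $F\in\mc{F}$ meets $C$ in at most $|C|-1$ points, so $x=\chi_C/(|C|-1)$ lies in $B(X_\mc{F})\cap c_{00}$. By (2), $x$ is a convex combination of elements $\sigma\in W(\mc{F}^\perp)$. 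Each such $\supp(\sigma)$ meets the clique $C$ in at most one point, because every pair in $C$ is in $\mc{F}$. Therefore $\sum_{v\in C}x(v)\leq 1$, whereas directly $\sum_{v\in C}x(v)=|C|/(|C|-1)>1$, a contradiction. So $\mc{F}=\mc{C}(G)$, and then $\mc{F}^\perp=\mc{A}(G)$. Now ($\ast$), restricted to the positive orthant, is exactly the polytope identity of Theorem \ref{stab} for each finite $G\upharpoonright V$, so every finite induced subgraph is perfect, which is (5).

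For (1)$\Leftrightarrow$(2), I use ($\ast$) together with Minkowski's theorem for the polytope $B(X_{\mc{F}\upharpoonright V})$. Under (2), every extreme point lies in $W(\mc{F}^\perp\upharpoonright V)$. If $\supp(\sigma)$ is not maximal, say $\supp(\sigma)\cup\{v\}\in\mc{F}^\perp$ with $v\notin\supp(\sigma)$, then $\sigma=\tfrac12\big((\sigma+e_v)+(\sigma-e_v)\big)$ with both summands in the ball, so $\sigma$ is not extreme. Conversely, if $\supp(\sigma)$ is maximal then $\sigma$ is extreme. Indeed, any $\tau$ in $W(\mc{F}^\perp\upharpoonright V)\setminus\{\sigma\}$ has $\sigma(v)\tau(v)<1$ for some $v\in\supp(\sigma)$, because its support cannot strictly contain $\supp(\sigma)$. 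Hence $\sum_v\sigma(v)\tau(v)<|\supp(\sigma)|$, so the functional $\tau\mapsto\sum_v\sigma(v)\tau(v)$ attains its maximum over the vertex set uniquely at $\sigma$. In the other direction, (1) and Krein--Milman give ($\ast$).

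For (2)$\Leftrightarrow$(3), note that $c_{00}$ is dense in $X_\mc{F}$, and that $x\in B(X_\mc{F})$ is the norm limit of $P_{\{1,\dots,n\}}(x)$, which lies in $B(X_\mc{F})\cap c_{00}$. For the converse, apply $P_V$ to (3): its image is compact and finite-dimensional, so no closure is needed. Then (3)$\Leftrightarrow$(4) follows from Goldstine's theorem and weak$^*$ continuity.

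For (0)$\Leftrightarrow$(2), by Proposition \ref{geomsym} it suffices to show $\|\al\|^*_\mc{F}=\|\al\|_{\mc{F}^\perp}$ for finitely supported $\al$. Under ($\ast$) with $V\supseteq\supp(\al)$, and using the projection $P_V$, we get $\|\al\|^*_\mc{F}=\sup\{\al(x):x\in B(X_{\mc{F}\upharpoonright V})\}=\max_{\sigma}\al(\sigma)=\|\al\|_{\mc{F}^\perp}$. Conversely, if these support functions agree for all $\al$ supported in $V$, then the bipolar theorem in $\mbb{R}^V$ yields ($\ast$). The symmetry assertion in Proposition \ref{geomsym} gives the second half of (0).
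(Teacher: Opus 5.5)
Your outline is sound, but one step in (2)$\Rightarrow$(5) is asserted rather than proved. Intersecting $(\ast)$ with $[0,\infty)^V$ only gives
\[ B(X_{\mc{F}\upharpoonright V})\cap[0,\infty)^V=\mrm{conv}\big(W(\mc{A}(G\clrest V))\big)\cap[0,\infty)^V. \]
Theorem \ref{stab}, however, requires the right-hand side to be $\mrm{conv}\{\chi_A:A\in\mc{A}(G\clrest V)\}$, the convex hull of the \emph{non-negative} sign vectors alone. That a non-negative convex combination of signed vectors is also a convex combination of non-negative ones is true, but it is not automatic. It is exactly what the paper's identity $(\star)$ (convexity of $C\cup S_{\{v\}}[C]$) handles.

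You can fill this directly. Suppose $x\geq 0$ and $x=\sum_i\lambda_i\sigma_i$ with $\sigma_i\in W(\mc{A}(G\clrest V))$. Then coordinatewise $x\leq y:=\sum_i\lambda_i\chi_{\supp(\sigma_i)}\in\mrm{conv}\{\chi_A\}$. Moreover, $\mrm{conv}\{\chi_A\}$ is closed downward within the orthant, because $\mc{A}(G\clrest V)$ is hereditary. Indeed, for $t\in[0,1]$, replacing the $v$-th coordinate of a point $y'$ by $t\,y'(v)$ yields $t\,y'+(1-t)P_{V\setminus\{v\}}(y')$, and $P_{V\setminus\{v\}}$ maps $\mrm{conv}\{\chi_A\}$ into itself. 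Lowering coordinates one at a time moves $y$ to $x$.

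Your (4)$\Rightarrow$(3) is also very terse. You can either project a weak$^*$-approximating net onto finitely many coordinates, as the paper does in (4)$\to$(2), or use Hahn--Banach separation in $X_\mc{F}$ to see that $\overline{\mrm{conv}}^{w^*}(W(\mc{F}^\perp))\cap X_\mc{F}=\overline{\mrm{conv}}^{\|\bullet\|_\mc{F}}(W(\mc{F}^\perp))$.

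With these repairs, your route differs from the paper's in the middle. The paper runs (0)$\to$(1)$\to$(2)$\to$(0) through Proposition \ref{extremedual}: it identifies $B(X_{\mc{F}\upharpoonright V})$ with the dual ball $B(X^*_{\mc{F}^\perp\upharpoonright V})$ and reads off its extreme points. In (2)$\to$(5) it obtains graph-generatedness only indirectly, via (2)$\to$(0), Proposition \ref{geomsym} and Fact \ref{graphgen}.

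You instead do three things differently:
\begin{itemize}
\item You get the extreme points from $(\ast)$ by an exposed-point argument with the functional $y\mapsto\sum_v\sigma(v)y(v)$.
\item You prove (0)$\Leftrightarrow$(2) by comparing support functions of two polytopes in $\mbb{R}^V$.
\item You prove graph-generatedness directly, using the test vector $\chi_C/(|C|-1)$ for a minimal clique $C\notin\mc{F}$. This is the same kind of vector the paper uses inside Proposition \ref{geomsym}.
\end{itemize}
Your version bypasses Proposition \ref{extremedual} and makes the finite-dimensional convex geometry explicit. The paper's version is shorter because it reuses that proposition.
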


Before proving the theorem, let us make some observations. As (0) is `symmetric' between $\mc{F}$ and $\mc{F}^\perp$ (see Proposition \ref{geomsym}), so are the rest of the statements, that is, switching the roles of $\mc{F}$ and $\mc{F}^\perp$ e.g. in (3) also leads to an equivalent statement, 
\begin{itemize}\setlength\itemsep{0.1cm}
\item[(3)$^\perp$] $B(X_{\mc{F}^\perp})=\overline{\mrm{conv}}^{\|\bullet\|_{\mc{F}^\perp}}(W(\mc{F}))$.
\end{itemize}

More importantly, as a by-product of this symmetry, we obtain a new proof of the perfect graph theorem: 

\begin{cor}\label{perfect-graph-theorem}\emph{(Perfect graph theorem)} If $G$ is perfect, then so is $G^c$.
\end{cor}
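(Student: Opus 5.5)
Since Corollary~\ref{perfect-graph-theorem} is stated right after Theorem~\ref{conv}, the plan is to derive it from the equivalence (0)$\Leftrightarrow$(5) together with the symmetry recorded in Proposition~\ref{geomsym}. Perfection is defined via finite induced subgraphs, and induced subgraphs of $G^c$ are complements of induced subgraphs of $G$. So it suffices to treat finite $G$, and we may relabel so that $G$ lives on $\NN$ or on a finite $\Omega$; the theorem and proposition are formulated for $\mrm{FHC}$ over $\NN$, but their proofs work verbatim over any countable $\Omega$. Alternatively, one can keep $G$ on $\NN$ directly, since perfection of $G$ already concerns all finite $H$.

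The steps are as follows. Let $G$ be perfect on $\NN$ and set $\mc{F}=\mc{C}(G)$. By (5)$\Rightarrow$(0) of Theorem~\ref{conv}, $\mc{F}$ and $\mc{F}^\perp$ are geometric duals, i.e.\ $Y_\mc{F}=X^*_{\mc{F}^\perp}$ and $Y_{\mc{F}^\perp}=X^*_\mc{F}$. In particular $\mc{F}^\perp$ is a geometric dual of $\mc{F}^{\perp\perp}=\mc{F}$; this uses $\mc{C}(G)^{\perp\perp}=\mc{C}(G)$ from Fact~\ref{graphgen}. Thus statement (0) of Theorem~\ref{conv} holds for the family $\mc{F}^\perp\in\mrm{FHC}$, namely $\mc{F}^\perp$ and $(\mc{F}^\perp)^\perp$ are geometric duals.

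Applying (0)$\Rightarrow$(5) to $\mc{F}^\perp$ gives a perfect graph $G'$ with $\mc{F}^\perp=\mc{C}(G')$. On the other hand, $\mc{F}^\perp=\mc{A}(G)=\mc{C}(G^c)$. A graph is determined by its clique family (its edges are exactly the two-element cliques, which is the uniqueness in the definition of graph-generated). Hence $G'=G^c$, and $G^c$ is perfect.

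The only potential obstacle is that Theorem~\ref{conv} must genuinely be proved without invoking the perfect graph theorem. In particular, the direction (0)$\Rightarrow$(5) must rely only on Chv\'atal's Theorem~\ref{stab}, applied to $\mc{F}$ itself, and not on the symmetry under complements. If the proof of (0)$\Rightarrow$(5) does this, the argument above is non-circular. The symmetry is then supplied entirely by Proposition~\ref{geomsym}, i.e.\ by the Banach-space fact that $(e_{\mc{H},n})\stackrel{i}{\sim}(e^{**}_{\mc{H},n})$.
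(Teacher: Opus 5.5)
Your proposal is correct and is the paper's own argument. The paper's one-line proof reads as follows: since $G$ is perfect, (5) holds for $\mc{C}(G)$, hence so does (0), which is symmetric in $\mc{F}$ and $\mc{F}^\perp$ by Proposition~\ref{geomsym}; therefore (5)$^\perp$ holds, i.e.\ $\mc{F}^\perp=\mc{C}(G^c)$ is generated by a perfect graph. You simply make explicit two points the paper leaves implicit: the identification $G'=G^c$ via uniqueness of the generating graph, and the harmless reduction to graphs on $\NN$.
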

\begin{proof}[Proof of Corollary \ref{perfect-graph-theorem}]
Given a perfect graph $G$, (5)$^\perp$ says that $\mc{F}^\perp=\mc{C}(G)^\perp=\mc{C}(G^c)$ is generated by a perfect graph, in other words, that $G^c$ is perfect.
\end{proof}

\begin{proof}[Proof of Theorem \ref{conv}] 
We will prove (0)$\to$(1)$\to$(2)$\to$(0), (2)$\to$(3)$\to$(4)$\to$(2), and (2)$\leftrightarrow$(5). Regarding (1)$-$(4), clearly, $W(\mc{F}^\perp)\subseteq B(X_\mc{F})$ holds for every family $\mc{F}$.

\smallskip
(0)$\to$(1): Notice that (0) holds iff 
\[\tag{$0_V$} (e_{\mc{F},n})_{n\in V}\stackrel{i}{\sim} (e^*_{\mc{F}^\perp,n})_{n\in V}\;\text{ i.e. }\;B(X_{\mc{F}\upharpoonright V})=B(X^*_{\mc{F}^\perp\upharpoonright V})\] for every non-empty finite $V\subseteq \NN$, hence, in this case, applying Proposition \ref{extremedual}, $\mrm{Ext}(B(X_{\mc{F}\upharpoonright V}))=\mrm{Ext}(B(X^*_{\mc{F}^\perp\upharpoonright V}))=W(\max(\mc{F}^\perp\clrest V))$. 

\smallskip
(1)$\to$(2): As elements of $B(X_\mc{F})\cap c_{00}$ and $W(\mc{F}^\perp)$ are finitely supported and $\mc{F}^\perp$ is hereditary, (2) is equivalent to
\[\tag{$2_V$} B(X_{\mc{F}\upharpoonright V})=\mrm{conv}(W(\mc{F}^\perp\clrest V))\]
for every non-empty finite $V\subseteq \NN$. Fix such a $V$ and let $\mc{H}=\mc{F}\clrest V$, clearly, $\mc{H}^\perp=(\mc{F}\clrest V)^\perp=\mc{F}^\perp\clrest V$. Now, 
\begin{itemize}\setlength\itemsep{0.1cm}
\item[(i)] $B(X_\mc{H})=\mrm{conv}(\mrm{Ext}(B(X_\mc{H})))$ because $X_\mc{H}$ is finite dimensional,
\item[(ii)] $\mrm{conv}(\mrm{Ext}(B(X_\mc{H})))=\mrm{conv}(W(\max(\mc{H}^\perp)))$ because (1), and 
\item[(iii)] $\mrm{conv}(W(\max(\mc{H}^\perp)))=\mrm{conv}(W(\mc{H}^\perp))$ via trivial manipulations of signs.
\end{itemize}

\smallskip
(2)$\to$(0): Fix a non-empty finite $V\subseteq\NN$ and let $\mc{H}=\mc{F}\clrest V$, we show that $(0_V)$ holds, i.e. $B(X_\mc{H})=B(X^*_{\mc{H}^\perp})$.  Applying $(2_V)$ and (iii), $B(X_\mc{H})=\mrm{conv}(W(\mc{H}^\perp))=\mrm{conv}(W(\max(\mc{H}^\perp)))$ which is further equal to $\mrm{conv}(\mrm{Ext}(B(X^*_{\mc{H}^\perp})))$ (see Proposition \ref{extremedual}) and hence to $B(X^*_{\mc{H}^\perp})$ as in (i).

\smallskip 
(2)$\to$(3): $B(X_\mc{F})\cap c_{00}$ is norm-dense in $B(X_\mc{F})$.

\smallskip
(3)$\to$(4): $B(X_\mc{F})=\overline{\mrm{conv}}^{\|\bullet\|_{\mc{F}}}(W(\mc{F}^\perp))=\overline{\mrm{conv}}^{\|\bullet\|^{**}_\mc{F}}(W(\mc{F}^\perp))\subseteq \overline{\mrm{conv}}^{w^*}(W(\mc{F}^\perp))$ and we know (Goldstine's theorem) that $\overline{B(X_\mc{F})}^{w^*}=B(X^{**}_\mc{F})$.

\smallskip
(4)$\to$(2): Fix an $x\in B(X_\mc{F})\cap c_{00}$ and a sequence $x_n\in\mrm{conv}(W(\mc{F}^\perp))$ which is $w^*$-converging to $x$ in $X^{**}_\mc{F}$. If $\supp(x)\subseteq I:=\{1,\dots,m\}$, then, of course,
\[ P_{I}(x_n)\in P_I\big[\mrm{conv}(W(\mc{F}^\perp))\big]= \mrm{conv}\big(P_I[W(\mc{F}^\perp)]\big)\] also $w^*$-converges to $x=P_I(x)$, therefore, $x\in\overline{\mrm{conv}}^{w^*}(P_I[W(\mc{F}^\perp)])$. As $P_I[W(\mc{F}^\perp)]$ is finite, its convex hull is closed in any vector topology, hence $x\in\mrm{conv}(P_I[W(\mc{F}^\perp)])\subseteq\mrm{conv}(W(\mc{F}^\perp))$.

\smallskip
(2)$\leftrightarrow$(5): Theorem \ref{stab} says that (5) holds iff $\mc{F}$ is graph-generated and  
\[\tag{$5_V$}\mrm{conv}\big(\big\{x\in W(\mc{F}^\perp\clrest V):x\geq 0\big\}\big)=\big\{x\in B(X_{\mc{F}\upharpoonright V}):x\geq 0\big\}\]
for every non-empty finite $V\subseteq\NN$. As both $W(\mc{F}^\perp\clrest V)$ and $B(X_{\mc{F}\upharpoonright V})$ are closed for switching signs at coordinates, $(5_V)$ implies $(2_V)$. Conversely, we already know that (2) implies (0), hence $\mc{F}$ is graph-generated, we show that $(5_V)$ holds for every $V$. Let $\mc{H}=\mc{F}\clrest V$ and $W(\mc{H}^\perp)_+=\{x\in W(\mc{H}^\perp):x\geq 0\}$. For $E\subseteq V$ define $S_E:\mbb{R}^V\to\mbb{R}^V$, $S_E(x)(k)=-x(k)$ if $k\in E$ and $=x(k)$ otherwise. If we can show that
\[\tag{$\star$} \mrm{conv}\big(W(\mc{H}^\perp)\big)=\bigcup\big\{S_E\big[\mrm{conv}\big(W(\mc{H}^\perp)_+\big)\big]:E\subseteq V\big\}\]
holds (for every $\mc{H}$), then, applying (2), we are done. To show ($\star$), first of all, notice that $S_E[\mrm{conv}(W(\mc{H}^\perp)_+)]=\mrm{conv}(S_E[W(\mc{H}^\perp)_+])\subseteq \mrm{conv}(W(\mc{H}^\perp))$. To show the reverse inclusion, it is enough to check that the right side is convex. This follows from applying the following easy fact $|V|$ many times: Fix $v\in V$ and let $C\subseteq\{x\in\mbb{R}^V:x(v)\geq 0\}$ be a convex set such that $P_{V\setminus\{v\}}[C]\subseteq C$. Then $C\cup S_{\{v\}}[C]$ is convex.   
\end{proof}

\begin{df}
A family $\mc{F}\in\mrm{FHC}$ is called \emph{perfect}\footnote{One may find this notion confusing because of the topological property of the same name (see e.g. \cite{ZOO}), and we shall distinguish them e.g. by saying combinatorially / topologically perfect. At the same time, as the topological property will not come up in this paper at all, for now, we find it safe to use.}  if $\mc{F}=\mc{C}(G)$ is generated by a perfect graph $G$.   
\end{df}

\begin{rem}\label{extYF}
Applying Theorem \ref{conv} and Proposition \ref{extremedual}, given a perfect $\mc{F}$, we have a full description of the extreme points of the unite ball in $Y_\mc{F}$: 
\[ \mrm{Ext}\big(B(Y_\mc{F})\big)=\mrm{Ext}\big(B(X_{\mc{F}^\perp}^*)\big)=W\big(\max(\overline{\mc{F}^\perp})\big).\]
Of course, the smaller space $X_\mathcal{F}$ may have much fewer extreme points, consider e.g. $c_0\subseteq\ell_\infty$. 
\end{rem}

Let us take a look at our examples from Section \ref{graph} in the light of Theorem \ref{conv}:

\begin{exa}
It is trivial to check that disjoint unions of perfect graphs are perfect. Therefore, the following are equivalent: (i) $\bigcup\mc{Q}$ is perfect; (ii) $(\bigcup\mc{Q})^\perp$ is perfect; (iii) $\mc{Q}_n$ is perfect for every $n$; (iv) $\mc{Q}^\perp_n$ is perfect for every $n$; (v) $\mrm{Fh}(\mc{Q})$ is perfect; (vi) $\mrm{Fh}(\mc{Q})^\perp$ is perfect. If these families are perfect, then, applying Theorem \ref{conv}, $X_{\bigcup\mc{Q}}^*=Y_{\mrm{Fh}(\mc{Q}^\perp)}=X_{\mrm{Fh}(\mc{Q}^\perp)}$ and $X_{\mrm{Fh}(\mc{Q})}^*=Y_{\mrm{Fh}(\mc{Q})}^*=Y_{\bigcup\mc{Q}^\perp}$.  
\end{exa}

\begin{exa} Applying Proposition \ref{comppft} and the symmetry mentioned above, the families $\mc{C}(\PP)$ and $\mc{A}(\PP)$ are perfect for every poset $\PP$. This covers the following examples:
\begin{itemize}\setlength\itemsep{0.1cm}
\item $[\NN]^{\leq 1}$ and $[\NN]^{<\infty}$, that is, $c^*_0=\ell_1$ and $\ell^*_1=\ell_\infty$;
\item the original $\mc{C}=\mc{C}(2^{<\NN},\unlhd)$ and $\mc{A}=\mc{A}(2^{<\NN},\unlhd)$, hence $X_\mc{C}^*=Y_\mc{A}$ and $X_\mc{A}^*=Y_\mc{C}$;
\item the families $\mc{C}_2=\mc{C}(\NN^2,\leq)$ and $\mc{A}_2=\mc{A}(\NN^2,\leq)$ from Example \ref{perles}, in other words, $X_{\mc{C}_2}^*=Y_{\mc{A}_2}$ and $X_{\mc{A}_2}^*=Y_{\mc{C}_2}$;
\item the families $\mc{C}(G_f)$ and $\mc{A}(G_f)$ from Example \ref{sierpinski}, therefore, $X_{\mc{C}(G_f)}^*=Y_{\mc{A}(G_f)}$ and $X_{\mc{A}(G_f)}^*=Y_{\mc{C}(G_f)}$.
\end{itemize}
\end{exa}

\section{Sierpi\'nski graphs and spaces}\label{sier1}

Let us recall Example \ref{sierpinski}, with a bijection $f\colon \mathbb{N} \to \mathbb{Q}$ we associate the following structures: 
\begin{itemize}\setlength\itemsep{0.1cm}
\item The partial order $\leq_f$ on $\NN$, $n\leq_f k$ iff $n\leq k$ and $f(n)\leq f(k)$. 
\item The \emph{Sierpi\'nski graph} $G_f=G_{(\NN,\leq_f)}=(\NN,E_f)$, i.e. the comparability graph of $(\NN,\leq_f)$, $\{n,k\}\in E_f$ iff $n\ne k$ and either $n\leq_f k$ or $k\leq_f n$.
\item The family $\mc{C}_f=\mc{C}(G_f)$ and the generated \emph{Sierpi\'nski space} $X_f=X_{\mc{C}_f}$  with norm $\|\bullet\|_f=\|\bullet\|_{\mc{C}_f}$ and canonical basis $(e_{f,n})=(e_{\mc{C}_f,n})$. Notice that $\mc{A}_f=\mc{A}(G_f)=\mc{C}_{-f}$. Also, let $Y_f=Y_{\mc{C}_f}$.
\end{itemize}

Denote by $\mrm{Bij}(\NN,\mbb{Q})$ the set of all bijections from $\NN$ to $\mbb{Q}$. First of all, we characterize the cases when different bijections lead to the same graphs and spaces, also, the isomorphisms between these graphs: 
 
\begin{prop}\label{nonisom} If $f,h\in \mrm{Bij}(\NN,\mbb{Q})$ and $\pi$ is a permutation of $\NN$, then the following are equivalent: 
\begin{itemize}\setlength\itemsep{0.1cm}
\item[(a)] $\pi:G_f\to G_h$ is an isomorphism; 
\item[(b)] $(e_{f,n})\stackrel{i}{\sim}(e_{h,\pi(n)})$; 
\item[(c)] $\pi=\mrm{id}$ and there is an order isomorphism $e$ of $\mbb{Q}$ such that $f=e\circ h$.
\end{itemize}
\end{prop}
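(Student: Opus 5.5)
We prove (a)$\leftrightarrow$(b), then (c)$\to$(a), and finally (a)$\to$(c), which is the main obstacle.

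\emph{(a)$\leftrightarrow$(b).} If $\pi$ is a graph isomorphism, it maps finite cliques of $G_f$ bijectively onto finite cliques of $G_h$, so $\pi[\mc{C}_f]=\mc{C}_h$. Hence $\|\sum_n c_ne_{f,n}\|_f=\|\sum_n c_ne_{h,\pi(n)}\|_h$ for every finitely supported sequence $(c_n)$. Conversely, suppose $(e_{f,n})\stackrel{i}{\sim}(e_{h,\pi(n)})$. Then $\|e_{f,n}+e_{f,k}\|_f=\|e_{h,\pi(n)}+e_{h,\pi(k)}\|_h$. The left side equals $2$ if $\{n,k\}\in E_f$ and $1$ otherwise, and similarly for the right side, so $\{n,k\}\in E_f$ iff $\{\pi(n),\pi(k)\}\in E_h$.

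\emph{(c)$\to$(a).} If $f=e\circ h$ with $e$ order preserving, then $f(n)<f(k)$ iff $h(n)<h(k)$. Thus $\leq_f=\leq_h$, so $G_f=G_h$ and $\mrm{id}$ is an isomorphism.

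\emph{(a)$\to$(c).} First we show that $\pi=\mrm{id}$; then $E_f=E_h$. For $n<k$, the pair $\{n,k\}$ is an edge iff $f(n)<f(k)$, and likewise for $h$. Hence $f(n)<f(k)$ iff $h(n)<h(k)$, and $e=f\circ h^{-1}$ is an order isomorphism of $\mbb{Q}$ with $f=e\circ h$.

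To get $\pi=\mrm{id}$, we look for graph-theoretic invariants of a vertex $n$ in $G_f$ that recover $n$ itself. The natural candidate is the set of non-neighbours of $n$ among the vertices $k$ for which $f(k)$ lies on a given side of $f(n)$. The key tool is density of $\mbb{Q}$ together with the fact that any interval of $\mbb{Q}$ contains $f(k)$ for infinitely many large $k$. The cleanest approach is induction on $n$: show that $\pi(1)=1$, then work in the induced subgraph on $\NN\setminus\{1\}$, which is again a Sierpi\'nski-type graph of a bijection onto $\mbb{Q}\setminus\{f(1)\}$, still a countable dense linear order without endpoints, so the argument can repeat.

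To characterise vertex $1$, we use that $1$ is $\leq_f$-minimal in index. Its neighbours are exactly $\{k:f(k)>f(1)\}$ and its non-neighbours are $\{k:f(k)<f(1)\}$, so no neighbour of $1$ is adjacent to any non-neighbour $m<k$ lying below $f(1)$ in value. More precisely, if $k\sim 1$ and $m\not\sim 1$, then $f(m)<f(1)<f(k)$, so $k\sim m$ iff $m<k$. We would try the following property: ``$N(v)$ and $N(v)^c\setminus\{v\}$ have the property that every $u\notin N(v)$ is adjacent to all but finitely many elements of $N(v)$,'' which holds for $v=1$ since $u\sim k$ whenever $k>u$ and $f(k)>f(1)$. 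For a general $v>1$ we expect it to fail. There is a non-neighbour $u<v$ with $f(u)>f(v)$, namely any earlier index whose value is above $f(v)$, if one exists. Then infinitely many $k>v$ with $f(v)<f(k)<f(u)$ are neighbours of $v$ but not of $u$. If no earlier index is above $f(v)$, we use an earlier one below and a symmetric property. Checking that some combinatorial property of this kind singles out $1$ uniformly is the delicate step. If it proves awkward, a fallback is to compare the possible degrees of freedom using the infinite cliques and anti-cliques through a vertex.
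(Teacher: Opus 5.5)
Your arguments for (a)$\leftrightarrow$(b) and (c)$\to$(a) are correct. So is the last step of (a)$\to$(c), passing from $\pi=\mrm{id}$ to $e=f\circ h^{-1}$. The gap is the step that carries all the content: you never prove $\pi=\mrm{id}$.

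The invariant you propose is ``every non-neighbour of $v$ is adjacent to all but finitely many neighbours of $v$''. It does not single out vertex $1$. Suppose $f(1)<f(2)$. Then the non-neighbours of $2$ are exactly the $u>2$ with $f(u)<f(2)$. Such a $u$ is adjacent to a neighbour $k>2$ (which has $f(k)>f(2)>f(u)$) iff $u<k$. So vertex $2$ has the property too. More generally, the property holds for every $v$ whose predecessors all have smaller $f$-values. That is precisely the case you defer to an unspecified ``symmetric property''. The proposal stops there, and the fallback via infinite cliques and anti-cliques is not an argument.

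Your inductive scheme can be rescued by adding a second clause: every neighbour of $v$ is adjacent to only finitely many non-neighbours of $v$.
\begin{itemize}
\item Vertex $1$ satisfies it, since a neighbour $k$ and a non-neighbour $u$ of $1$ are adjacent iff $u<k$.
\item If $v>1$ and all $u<v$ have $f(u)<f(v)$, the clause fails: $1$ is a neighbour of $v$, and it is adjacent to the infinitely many $k>v$ with $f(1)<f(k)<f(v)$, all of which are non-neighbours of $v$.
\end{itemize}
Removing an initial segment of $\NN$ leaves an injection whose range still meets every open interval infinitely often. So this two-clause property characterizes the least remaining vertex at each stage, and the induction goes through.

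The paper avoids characterizing vertices altogether and shows directly that $\pi$ is increasing. Suppose $n_0<n_1$ but $\pi(n_1)<\pi(n_0)$, say with $f(n_0)<f(n_1)$. Choose $n>n_1$ with $f(n_0)<f(n)<f(n_1)$ and $\pi(n)>\pi(n_0)$. This is possible because infinitely many $n$ meet the first condition and only finitely many violate the second. In $G_f$, the pairs $\{n_0,n\}$ and $\{n_0,n_1\}$ are edges and $\{n_1,n\}$ is not. Transfer this to $G_h$, using the positions $\pi(n_1)<\pi(n_0)<\pi(n)$. This gives $h(\pi(n_0))<h(\pi(n))<h(\pi(n_1))<h(\pi(n_0))$, a contradiction. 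Since an increasing permutation of $\NN$ is the identity, $\pi=\mrm{id}$.
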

\begin{proof} Obviously, (c)$\to$(a)$\leftrightarrow$(b). To prove (a)$\to$(c) first we show that $\pi$ preserves the usual ordering of $\NN$ hence $\pi$ must be the identity. Fix $n_0 < n_1 \in \mathbb{N}$ and suppose for the contradiction that $\pi(n_1)<\pi(n_0)$. We will assume that $f(n_0) < f(n_1)$, the case when $f(n_0)>f(n_1)$ is analogous. Pick an $n\in (n_1,\infty)$ such that 
\[ f(n_0) < f(n) < f(n_1)\,\text{ and }\,\pi(n_0)<\pi(n).\]  
Now, $f(n_0)<f(n)$ implies $\{n_0,n\}\in E_f$, hence $\{\pi(n_0),\pi(n)\}\in E_h$, therefore $\underline{h(\pi(n_0))<h(\pi(n))}$,  
and similarly, $f(n)<f(n_1)$ leads to $\underline{h(\pi(n))<h(\pi(n_1))}$ and $f(n_0)<f(n_1)$ leads to $\underline{h(\pi(n_1)) < h(\pi(n_0))}$,
a contradiction.

\smallskip
Define the bijection $e:\mbb{Q}\to\mbb{Q}$, $e(h(n))=f(n)$. Given $n<k$, we know that $f(n)<f(k)$ iff $\{n,k\}\in E_f=E_h$ iff $h(n)<h(k)$, hence $e$ is order preserving.
\end{proof} 

It follows that, up to isomorphism, there are many Sierpi\'nski graphs, and, up to (permutative) isometric equivalence, there are many Sierpi\'nski spaces. These graphs and spaces form somewhat strange classes, namely, each of their elements is universal:

\begin{lem}\label{copy} If $f,h\in\mrm{Bij}(\NN,\mbb{Q})$ then there is an increasing (with respect to $\NN$) isomorphic embedding of $G_h$ in $G_f$, in other words, $(e_{h,n})$ is isometrically equivalent to a subbasis of $(e_{f,n})$. In particular, $X_f$ contains an complemented isometric copy of $X_h$. 
\end{lem}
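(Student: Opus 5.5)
We build a strictly increasing map $\varphi\colon\NN\to\NN$ by recursion, maintaining the invariant that for all $i<j\le n$,
\[ h(i)<h(j)\iff f(\varphi(i))<f(\varphi(j)). \]
Since $\varphi$ is increasing, $i<j$ and $\varphi(i)<\varphi(j)$ hold together. Hence $\{i,j\}\in E_h$ iff $h(i)<h(j)$ iff $f(\varphi(i))<f(\varphi(j))$ iff $\{\varphi(i),\varphi(j)\}\in E_f$. So $\varphi$ is an isomorphism of $G_h$ onto the induced subgraph $G_f\clrest\mrm{ran}(\varphi)$, and it is increasing with respect to the order of $\NN$.

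For the recursive step, suppose $\varphi(1)<\dots<\varphi(n)$ have been chosen. The value $h(n+1)$ determines a cut in the finite linearly ordered set $\{h(1),\dots,h(n)\}$: let $a$ be the largest $h(i)$ below $h(n+1)$ and $b$ the smallest above it, with $a=-\infty$ or $b=+\infty$ if none exists. Transport this to $f$: let $a'=f(\varphi(i))$ for the $i$ with $h(i)=a$, and similarly $b'$. By the invariant, $a'<b'$, and the order type of $\{f(\varphi(1)),\dots,f(\varphi(n))\}$ matches that of $\{h(1),\dots,h(n)\}$, so $(a',b')$ contains no $f(\varphi(i))$. The open interval $(a',b')$ of $\mbb{Q}$ is infinite and $f$ is a bijection, so $f^{-1}[(a',b')]$ is infinite. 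Pick $\varphi(n+1)$ in this set with $\varphi(n+1)>\varphi(n)$. Then $f(\varphi(n+1))$ occupies the same cut as $h(n+1)$, and the invariant holds at $n+1$. This is the only point where the argument uses anything specific: $\mbb{Q}$ is dense without endpoints and every nonempty open interval has infinitely many $f$-preimages. Note that a back-and-forth argument is neither needed nor possible here, since $\varphi$ is not onto.

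It remains to pass from the graph embedding to the Banach spaces. Since $\mc{C}_f$ consists of cliques, $\mc{C}_f\clrest\mrm{ran}(\varphi)=\{\varphi[C]:C\in\mc{C}_h\}$. Hence for finitely supported $x$ we get $\|\sum_n x(n)e_{f,\varphi(n)}\|_f=\|\sum_n x(n)e_{h,n}\|_h$, so $(e_{h,n})\stackrel{i}{\sim}(e_{f,\varphi(n)})$. Finally, $P_{\mrm{ran}(\varphi)}$ is a norm-one projection on $X_f$ by $1$-unconditionality of the basis, so its image $[e_{f,\varphi(n)}]$ is a $1$-complemented isometric copy of $X_h$.
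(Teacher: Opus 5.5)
Your proof is correct and takes essentially the same route as the paper. Both construct $n_1<n_2<\dots$ recursively, choosing $f(n_{k+1})$ in the same cut of $\{f(n_1),\dots,f(n_k)\}$ that $h(k+1)$ occupies in $\{h(1),\dots,h(k)\}$; the paper secures $n_{k+1}>n_k$ by avoiding the finitely many values $f(n)$, $n\le n_k$, which is the same as your appeal to infinitely many preimages, and your $1$-complementation argument via $P_{\mrm{ran}(\varphi)}$ fills in a step the paper leaves implicit.
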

\begin{proof}
The desired embedding $G_h\to G_f$, $i\mapsto n_i$ can be constructed by recursion: Suppose we already have $n_1<n_2 < \dots < n_k$ such that $i\mapsto n_i$ is an isomorphism between $G_h\clrest \{1,\dots,k\}$ and $G_f\clrest\{n_1,\dots,n_k\}$. Choose a $q\in \mathbb{Q}\setminus\{f(n):n\leq n_k\}$ such that $q < f(n_i)$ iff $h(k+1) < h(i)$ for every $i=1,\dots,k$ and set $n_{k+1}=f^{-1}(q)$.
\end{proof}

Applying this property of the Sierpi\'nski spaces, we show that working with isometries when distinguishing their bases was crucial (for similar results, see
\cite{Wojtowicz}):

\begin{thm}\label{permisom} If $f,h\in\mrm{Bij}(\NN,\mbb{Q})$, then $(e_{f,n})$ and $(e_{h,n})$ are permutatively equivalent. In particular, all Sierpi\'nski spaces are isomorphic.
\end{thm}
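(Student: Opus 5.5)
By Lemma \ref{copy} the bases $(e_{f,n})$ and $(e_{h,n})$ each embed isometrically into the other as subbases, and the Sierpi\'nski spaces are each complemented in the other. The natural route is a Schroeder--Bernstein (Pe\l czy\'nski decomposition) style argument at the level of bases. The slicker route I would actually pursue is a back-and-forth construction that produces a permutation $\pi$ of $\NN$ such that $(e_{f,n})$ and $(e_{h,\pi(n)})$ are $C$-equivalent for a fixed constant $C$. By Proposition \ref{nonisom}, $\pi$ cannot be a graph isomorphism. Instead I will aim for a $\pi$ that is an isomorphism $G_f\to G_h$ ``up to finitely many controlled defects per clique''.

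The plan is to partition $\NN$ into two infinite sets $A$ and $B$, each of which induces a copy of a Sierpi\'nski graph. I would then arrange that every clique in $G_f$ splits into a part in $A$ and a part in $B$, each of which is mapped to a clique of $G_h$, and vice versa. Then for $x\in c_{00}$ and a clique $C$ of $G_f$ we get
\[\sum_{i\in C}|x(i)|\le\sum_{i\in C\cap A}|x(i)|+\sum_{i\in C\cap B}|x(i)|\le 2\|x\circ\pi^{-1}\|_h,\]
and symmetrically, so the bases are $2$-equivalent.

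Concretely, I would build $\pi$ by back-and-forth, handling one element of $\NN$ (on the $f$ side or the $h$ side) at each step. The ``forth'' steps map the next unassigned $n$ into $h$'s side while preserving edges among already assigned points in the same class ($A$ or $B$). This is possible by the density argument in Lemma \ref{copy}: $h$ is a bijection onto $\mbb{Q}$, so any finite pattern of order constraints $q<h(\pi(i))$ or $q>h(\pi(i))$ is realized by some unused, sufficiently large index. The ``back'' steps are symmetric. The class labels are chosen to keep both maps order-consistent within each class. The key requirement is that within each class, $\pi$ preserves both the $\NN$-order and the $\mbb{Q}$-order relations, so that $\pi$ restricted to a class is an isomorphism of the induced partial orders $\leq_f$ and $\leq_h$.

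The main obstacle is monotonicity in $\NN$. In a back-and-forth, the element chosen later on the $h$ side must be larger than the images already assigned in its class, but that forces earlier-indexed elements on the other side to be placed after later ones. This is exactly why two classes are needed: I would put ``forth'' elements in $A$ and ``back'' elements in $B$, so that within $A$ the $f$-side indices increase and images can be taken increasing, and within $B$ the $h$-side indices increase and preimages can be taken increasing. I must check that this is consistent, namely that within $A$ both $\pi$ and the order are increasing. I expect this to hold because in $A$ we process the $f$-indices in increasing order and pick the $h$-index fresh and large, and in $B$ the symmetric statement holds.

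If that works, $\pi$ restricted to $A$ and $\pi$ restricted to $B$ are isomorphisms of the induced subgraphs, and the estimate above gives permutative equivalence with constant $2$. Isomorphism of all Sierpi\'nski spaces then follows immediately.
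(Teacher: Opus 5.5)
Your proof is correct, and it reaches the same two-piece decomposition as the paper by a different route.

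\textbf{The paper's route.} It applies Lemma~\ref{copy} twice to get increasing embeddings $\varphi\colon G_f\to G_h$ and $\psi\colon G_h\to G_f$. It then invokes Banach's decomposition theorem, the set-theoretic Schroeder--Bernstein argument, to get $\NN=A_1\cup A_2=B_1\cup B_2$ with $\varphi[A_1]=B_1$ and $\psi[B_2]=A_2$. Finally it glues $\rho=\varphi\clrest A_1\cup\psi^{-1}\clrest A_2$.

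\textbf{Your route.} You build the partition $A\cup B$ and the permutation simultaneously by back-and-forth. The closing triangle-inequality estimate with constant $2$ is identical in both proofs.

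\textbf{The check you only ``expect'' does hold.} The least unassigned index strictly increases between consecutive forth steps (resp.\ back steps). Taking the fresh index above all indices used so far therefore makes $\pi\clrest A$ and $\pi\clrest B$ increasing. Combined with the preserved order pattern of the $f$- and $h$-values, both restrictions are isomorphisms onto induced subgraphs. That is all the estimate needs; edges between $A$ and $B$ never matter.

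\textbf{What each approach buys.} The paper's route is modular. It isolates the universality lemma, which is of independent interest (it also gives the complemented isometric copies). It then applies a general principle: whenever two hereditary families embed into each other, their bases are permutatively $2$-equivalent. Your route is self-contained and needs only the one-point extension property, namely that every nonempty open interval of $\mathbb{Q}$ contains $h(m)$ for infinitely many $m$. In effect it is a constructive Schroeder--Bernstein.

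\textbf{A terminological point.} The tool needed is Banach's decomposition on index sets, not Pe\l czy\'nski's decomposition method. The latter works at the level of spaces and would not by itself give permutative equivalence of the bases.
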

\begin{proof} Applying Lemma \ref{copy}, fix injections $\varphi,\psi:\mathbb{N} \to \mathbb{N}$ such that $(e_{f,n})\stackrel{i}{\sim}(e_{h,\varphi(n)})$ and $(e_{h,n})\stackrel{i}{\sim}(e_{f,\psi(n)})$. By the Banach Decomposition Theorem, there are partitions $\NN=A_1\cup A_2=B_1\cup B_2$ such that $\varphi[A_1] =  B_1$ and $\psi[B_2] = A_2$. Define $\rho: \mathbb{N} \to \mathbb{N}$ by setting $\rho(n)=\varphi(n)$ for $n\in A_1$ and $\rho(n) = \psi^{-1}(n)$ for $n\in A_2$, then $\rho$ is a bijection.	The point is that $\rho\clrest A_i:A_i\to B_i$ is an isomorphism between (the hypergraphs) $\mc{C}_f\clrest A_i$ and $\mc{C}_h\clrest B_i$. It follows that if $x\in c_{00}$ and $\supp(x)\subseteq A_i$, then $\|\sum_n x(n)e_{f,n}\|_f=\|\sum_n x(n)e_{h,\rho(n)}\|_h$, and hence for arbirtrary $x\in c_{00}$, 
\begin{align*} 
\Big\|\sum_nx(n)e_{f,n}\Big\|_f & \leq \Big\|\sum_{n\in A_1}x(n)e_{f,n}\Big\|_f+\Big\|\sum_{n\in A_2}x(n)e_{f,n}\Big\|_f\\
&=\Big\|\sum_{n\in A_1}x(n)e_{h,\rho(n)}\Big\|_h+\Big\|\sum_{n\in A_2}x(n)e_{h,\rho(n)}\Big\|_h\leq 2\Big\|\sum_nx(n)e_{h,\rho(n)}\Big\|_h.
\end{align*}
Applying this argument along $\rho^{-1}$, we obtain $\|\sum_nx(n)e_{h,\rho(n)}\|_h\leq 2\|\sum_nx(n)e_{f,n}\|_f$, and so $(e_{f,n})$ and $(e_{h,\rho(n)})$ are equivalent.
\end{proof}

\begin{cor} If $f\in\mrm{Bij}(\NN,\mbb{Q})$, then $X_f^*$ is isomorphic to $Y_f$. 
\end{cor}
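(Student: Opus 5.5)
The corollary will follow by combining the geometric duality from Theorem \ref{conv} with the permutative equivalence from Theorem \ref{permisom}.

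First, $G_f$ is a comparability graph, so by Proposition \ref{comppft} it is perfect, and $\mc{C}_f=\mc{C}(G_f)$ is a perfect family. Theorem \ref{conv} ((5)$\to$(0)) says that $\mc{C}_f$ and $\mc{C}_f^\perp=\mc{A}_f$ are geometric duals. In particular, $X_f^*=X^*_{\mc{C}_f}=Y_{\mc{A}_f}$ isometrically, as subspaces of $\mbb{R}^\NN$ along the coordinate functionals.

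Next, recall that $\mc{A}_f=\mc{C}_{-f}$ and that $-f\in\mrm{Bij}(\NN,\mbb{Q})$. Hence $Y_{\mc{A}_f}=Y_{-f}$. It remains to show that $Y_{-f}$ is isomorphic to $Y_f$. Theorem \ref{permisom} gives a bijection $\rho$ of $\NN$ and a constant $K$ (in fact $K=2$) such that
\[ K^{-1}\Big\|\sum_n x(n)e_{f,n}\Big\|_f\leq\Big\|\sum_n x(n)e_{-f,\rho(n)}\Big\|_{-f}\leq K\Big\|\sum_n x(n)e_{f,n}\Big\|_f \]
for all $x\in c_{00}$. We pass this equivalence from $c_{00}$ to the $Y$-spaces. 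For $y\in\mbb{R}^\NN$, we have $\|y\|_\mc{F}=\sup_n\|P_{\{1,\dots,n\}}(y)\|_\mc{F}$, and more generally $\|y\|_\mc{F}$ is the supremum of $\|P_E(y)\|_\mc{F}$ over finite $E$, since every element of $\mc{F}$ is finite. So we define $T:Y_f\to Y_{-f}$ by $T(y)(\rho(n))=y(n)$. For finite $E$, the inequality above applied to $P_E(y)$ gives $\|P_{\rho[E]}(Ty)\|_{-f}\leq K\|P_E(y)\|_f$. Taking the supremum over finite $E$ (and noting that $\rho[E]$ ranges over all finite sets) yields $\|Ty\|_{-f}\leq K\|y\|_f$. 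The same argument with $\rho^{-1}$ gives the reverse inequality. Therefore $T$ is a linear isomorphism onto $Y_{-f}$.

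Composing, we get $X_f^*=Y_{-f}\cong Y_f$. The only step needing some care is the extension of the equivalence from finitely supported vectors to the full $Y$-spaces. This works because the norms of both $Y$-spaces are suprema over finite sets, so a permutation-based equivalence on $c_{00}$ transfers directly.
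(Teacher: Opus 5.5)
Your proof is correct. It uses the same two ingredients as the paper: the geometric duality of $\mc{C}_f$ and $\mc{A}_f=\mc{C}_{-f}$ from Theorem~\ref{conv}, and the permutative equivalence of Theorem~\ref{permisom} with $h=-f$. You apply them in the opposite order, though.

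The paper applies Theorem~\ref{permisom} at the level of the $X$-spaces, $X_f\simeq X_{-f}=X_{\mc{A}_f}$, and then dualizes, using the other half of the duality, $X^*_{\mc{A}_f}=Y_{\mc{C}_f}=Y_f$. Since the adjoint of an isomorphism is an isomorphism, nothing further needs checking. You dualize first, $X_f^*=X^*_{\mc{C}_f}=Y_{\mc{A}_f}=Y_{-f}$. This forces one extra verification: that the permutative equivalence on $c_{00}$ extends to the $Y$-spaces. You handle this correctly via $\|y\|_{\mc{F}}=\sup\{\|P_E(y)\|_{\mc{F}}:E\in[\NN]^{<\infty}\}$ and the fact that $\rho[E]$ runs over all finite sets.

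The paper's ordering is slightly more economical. Both routes end with the same isomorphism, namely a coordinate permutation with constant $2$, since the adjoint of the paper's permutation map acts on coordinates as $\alpha\mapsto\alpha\circ\rho$.
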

\begin{proof}
Applying Theorem \ref{permisom} with $h=-f$, $X_f\simeq X_{-f}=X_{\mc{A}_f}$, and  $X_{\mc{A}_f}^*=Y_f$.
\end{proof}

The above property is quite exceptional in the light of $X_f\subseteq Y_f\subseteq X^{**}_f$.

\smallskip
A very natural question is which Banach spaces can be embedded in the Sierpi\'nski space. Recall that $X_\mc{C}$, where $\mc{C}$ consists of all finite chains in $2^{<\NN}$, is universal for Banach spaces with
unconditional bases but $X_\mc{A}$, where $\mc{A}$ is the set of all finite anti-chains in $2^{<\NN}$, is not: It possesses the weak Banach-Sacks property (and so it does not contain a subspace without this property, e.g. the Schreier space) and it
does not contain a copy of $c_0(\ell_1)$, see \cite{BangOdell}.

In the next two sections we will show that the Sierpi\'nski space is quite rich, namely, it contains copies of various classical Banach spaces, including all Schreier spaces of finite order and all $\ell_p$ spaces. At the same time, it is not universal for Banach spaces with unconditional bases because it does not contain copies of $c_0(\ell_1)$.  

\begin{lem}\label{blockbasic}
Given a Banach space $X$ with basis $(e_n)$. If $X$ contains a copy of $c_0(\ell_1)$, then there is a block basic sequence\footnote{Here, as the indices are not linearly ordered, block basic means that the supports are finite and pairwise disjoint.} with respect to $(e_n)$ which is equivalent to the canonical basis of $c_0(\ell_1)$.
\end{lem}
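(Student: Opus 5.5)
We plan to use the standard gliding hump argument, applied twice: once inside each copy of $\ell_1$, and once across the $c_0$-sum. Let $T\colon c_0(\ell_1)\to X$ be an isomorphic embedding. Denote by $(u_{k,j})_{k,j\in\NN}$ the canonical basis of $c_0(\ell_1)$, where $k$ indexes the $c_0$-coordinate and $j$ the $\ell_1$-coordinate. Let $P_n$ be the basis projections of $(e_n)$, with basis constant $K$.

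Suppose first that we could find normalized vectors $y_{k,j}$ in $X$ with three properties:
\begin{itemize}
\item each $y_{k,j}$ is finitely supported;
\item the supports are pairwise disjoint;
\item $\sum_{k,j}\|y_{k,j}-z_{k,j}\|$ is as small as we like, where $(z_{k,j})$ is a normalized block-like sequence of the form $z_{k,j}=Tw_{k,j}/\|Tw_{k,j}\|$ and $(w_{k,j})$ is equivalent to $(u_{k,j})$.
\end{itemize}
Then the small perturbation principle gives $(y_{k,j})\sim(z_{k,j})\sim(u_{k,j})$. The perturbation principle applies because $(z_{k,j})$, under any enumeration in which it is a basic sequence, has basis constant controlled by $\|T\|\|T^{-1}\|$ times the unconditional constant of $(u_{k,j})$. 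This gives the lemma.

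To construct these vectors, we enumerate the pairs $(k,j)$ as a single sequence $(k_m,j_m)_m$ in which each row appears infinitely often. We then proceed by recursion on $m$. At stage $m$, the supports used so far lie in $\{1,\dots,N_m\}$. We need $w_m$ with $\|w_m\|$ comparable to $1$, with $w_m$ in the closed span of row $k_m$, and such that the family $(w_{k_i,j_i})$ is still equivalent to the $c_0(\ell_1)$ basis, while $\|P_{N_m}Tw_m\|$ is tiny. The key point is that for fixed $k$, the sequence $(u_{k,j})_j$ is not weakly null, so we cannot simply take $w_m=u_{k,j}$ with $j$ large. Instead we use that $P_{N_m}T$ has finite rank. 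The vectors $(P_{N_m}Tu_{k,j})_j$ lie in a finite-dimensional space and are bounded, so by compactness there are $j<j'$ beyond all indices used so far with $\|P_{N_m}T(u_{k,j}-u_{k,j'})\|<\eps_m$. Set $w_m=\frac12(u_{k,j}-u_{k,j'})$. Differences of distinct $\ell_1$ basis vectors are again $1$-equivalent to the $\ell_1$ basis once the pairs are disjoint, so each row of $(w_m)$ spans an isometric copy of $\ell_1$, and across rows we keep the $c_0$-structure. Hence $(w_{k,j})$ is isometrically equivalent to $(u_{k,j})$.

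Next we truncate. We choose $N_{m+1}>N_m$ with $\|(I-P_{N_{m+1}})Tw_m\|<\eps_m$, and set $y_m=(P_{N_{m+1}}-P_{N_m})Tw_m$. Then $\|y_m-Tw_m\|<2\eps_m$, and the supports $(N_m,N_{m+1}]$ are pairwise disjoint and finite, as required. Since $\|Tw_m\|\ge\|T^{-1}\|^{-1}$, normalizing changes the estimates only by constants. We choose $\eps_m$ summable and small compared to $(\|T\|\|T^{-1}\|K)^{-1}$. The standard stability estimate then yields the equivalence of $(y_m)$ with the canonical basis of $c_0(\ell_1)$, after reindexing the $y_m$ back as $y_{k,j}$.

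The main obstacle is the difference trick. For $c_0$ we could instead use weak nullness, but the $\ell_1$ coordinates are not weakly null, so finite-dimensional compactness together with differences is essential. One must also check that these differences preserve the $c_0(\ell_1)$ structure isometrically. They do, because the norm of $c_0(\ell_1)$ only depends on the row-wise $\ell_1$ norms of the coefficients, and disjoint differences preserve those norms.
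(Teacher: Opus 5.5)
Your proposal is correct and follows essentially the same route as the paper: enumerate the rows so that each recurs infinitely often, and recursively pick vectors in the current row that are disjoint from earlier supports in $c_0(\ell_1)$ and (almost) vanish on the initial coordinates of $X$; then truncate to finite blocks and conclude by the small perturbation principle. The only difference is minor. The paper gets exact vanishing by taking a unit vector in the finite-codimensional subspace $Y_{n_{l+1}}\cap\bigcap_{(n,k)\in S}\ker(b^*_{n,k})\cap\bigcap_{j\leq s}\ker(e^*_j)$, so its vectors are general seminormalized blocks in each $\ell_1$-row. Your compactness choice of $\frac12(u_{k,j}-u_{k,j'})$ gives only approximate vanishing, but it makes the isometric equivalence with the $c_0(\ell_1)$ basis explicit.
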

\begin{proof}
We can assume that $\|e_n\|=1$, let $b_{n,k}\in X$ be a normalized basic sequence equivalent to the basis of $c_0(\ell_1)$, $n$ along the $c_0$ product, $k$ along the copies of $\ell_1$, $Y=[b_{n,k}]\simeq c_0(\ell_1)$, $Y_n=[b_{n,k}:k\in\NN]\simeq\ell_1$. Denote  $e_n^*\in X^*$ and $b^*_{n,k}\in Y^*$ the bi-orthogonal functionals. For $x\in X$, let $\mrm{supp}_X(x)\subseteq\NN$ be its support with respect to $(e_n)$, and for $x\in Y$, $\mrm{supp}_Y(x)\subseteq\NN\times\NN$ its support with respect to $(b_{n,k})$. 

Fix a sequence $(n_l)$ of natural numbers such that $\{l:n=n_l\}$ is infinite for every $n\in\NN$ and a sequence $(\eps_l)$ of elements of $(0,1)$. By recursion on $l$, we will pick $y_l\in Y$ and $x_l\in X$ such that the following holds: 
\begin{itemize}\setlength\itemsep{0.1cm}
\item[(0)] $y_l\in Y_{n_l}$ for every $l$,
\item[(1)] $(y_l)$ is a seminormalized block basic sequence with respect to $(b_{n,k})$,
\item[(2)] $(x_l)$ is a seminormalized block basic sequence with respect to $(e_n)$,
\item[(3)] $\|y_l-x_l\|<\eps_l$ for every $l$.
\end{itemize}

Assuming we can do so, $(y_l)$ is (permutatively) equivalent to the basis of $c_0(\ell_1)$ (because of (0) and (1)), and if $(\eps_l)$ converges quickly enough to $0$, then $(y_l)$ and $(x_l)$ are equivalent because of the principle of small perturbations (see
\cite[Theorem 1]{Pelczynski58}).

\smallskip
For $l=1$, let $y_1=b_{n_1,1}$ and pick a large enough $N$ such that $x_1=\sum_{i=1}^Ne^*_i(y_1)e_i$ is as desired. Now, assume that we already have $y_1,\dots,y_l$ and $x_1,\dots,x_l$. Let $S=\bigcup_{i=1}^l\mrm{supp}_Y(y_i)\subseteq\NN\times\NN$, $s=\max\bigcup_{i=1}^l\mrm{supp}_X(x_i)$, and 
\[ \wt{Y}=Y_{n_{l+1}}\cap\bigcap_{(n,k)\in S}\ker(b^*_{n,k})\cap\bigcap_{j=1}^{s}\ker(e^*_j).\]
Then $\wt{Y}\subseteq Y_{n_{l+1}}$ is a closed subspace of finite co-dimension. Fix a unit vector $y \in \wt{Y}$, then $\mrm{supp}_Y(y)\subseteq (\{n_{l+1}\}\times \NN)\setminus S$, and so we can pick a projection $y_{l+1}$ of $y$ along a finite $E\subseteq (\{n_{l+1}\}\times \NN)\setminus S$ such that $\|y_{l+1}-y\|<\eps_l/2$. Also, as $\mrm{supp}_X(y)\subseteq \NN\setminus \{1,\dots,s\}$, we can pick a $x_{l+1}$ of finite support $\subseteq\NN\setminus \{1,\dots,s\}$ such that $\|x_{l+1}-y\|<\eps_l/2$.  
\end{proof}

Before the next ingredient, we recall a variant of Pt\'ak's Lemma: 

\begin{lem}\label{Ptak} \emph{(see \cite[Theorem 7.2]{BNF20})} 
Let $\mu:\mc{P}(\NN)\to [0,\infty]$ be a measure such that $\mu(\NN)=\infty$ and $\mu(\{n\})\to 0$, $\mc{F}\in\mrm{FHC}$, and $\eps>0$. If for every finite $E\subseteq\NN$ there is $F\in \mc{F}$ such that $F\subseteq E$ and $\mu(F) \geq \eps\mu(E)$, then $\mc{F}$ is not compact.
\end{lem}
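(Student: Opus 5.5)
The idea is to argue by contradiction via the Banach space characterization recalled in Section \ref{combspaces}: $\mc{F}$ is compact iff $X_\mc{F}$ contains no copy of $\ell_1$. So I will assume $\mc{F}$ is compact and build a block sequence in $X_\mc{F}$ equivalent to the $\ell_1$ basis. Write $w(n)=\mu(\{n\})$, so that $\mu(A)=\sum_{n\in A}w(n)$. Then for a finite $E$ the vector $w\chi_E\in c_{00}$ satisfies $\|w\chi_E\|_\mc{F}\geq \mu(F)$ for every $F\in\mc{F}$ with $F\subseteq E$. The hypothesis therefore reads
\[ \|w\chi_E\|_\mc{F}\geq \eps\,\mu(E)=\eps\,\|w\chi_E\|_{\ell_1}\quad\text{for every finite } E\subseteq\NN. \]
This is a lower $\ell_1$-estimate, but only for vectors that are restrictions of the fixed weight $w$ to some set.

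\emph{Step 1 (blocks).} Fix a summable sequence $\delta_k\downarrow 0$. Using $w(n)\to 0$ and $\mu(\NN)=\infty$, choose successive finite intervals $E_1<E_2<\dots$ with $w(n)<\delta_k$ for $n\in E_k$ and $1\leq\mu(E_k)\leq 2$. Put $u_k=w\chi_{E_k}$. These are disjointly supported, and they are seminormalized since $\eps\leq\|u_k\|_\mc{F}\leq\|u_k\|_{\ell_1}\leq 2$. The upper estimate $\|\sum a_ku_k\|_\mc{F}\leq 2\sum|a_k|$ is then trivial.

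\emph{Step 2 (lower estimate, the main point).} Let $(a_k)_{k\leq m}$ be finitely many scalars. By $1$-unconditionality we may assume $a_k\geq 0$, and by homogeneity $\max_k a_k=1$. Since the weights inside $E_k$ are below $\delta_k$, we can greedily pick $E'_k\subseteq E_k$ with $|\mu(E'_k)-a_k\mu(E_k)|<\delta_k$. Set $E'=\bigcup_k E'_k$. Because $0\leq a_k\leq 1$ and $E'_k\subseteq E_k$, we have $w\chi_{E'}\leq \sum_k u_k$ coordinatewise.

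Here is the obstacle: coordinatewise domination by $\sum_k u_k$ is useless, since we need domination by $\sum_k a_ku_k$. The fix is to compare $\sum_k a_ku_k$ with $w\chi_{E'}$ set by set. For any $F\in\mc{F}$ we want $\sum_{n\in F}(\sum_k a_ku_k)(n)$ to be close to $\mu(F\cap E')$. This is false for arbitrary $E'_k$, so the sets $E'_k$ must be chosen more cleverly.

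I would first try choosing $E'_k$ by a splitting argument. Partition each $E_k$ into $N$ consecutive pieces of nearly equal measure, where $N$ is large and the weights are small. Let $E'$ take about $a_kN$ of these pieces, arranged so that the coordinatewise average over the possible choices is $\approx\sum a_ku_k$. Averaging the hypothesis over these choices, and using that the norm is a supremum of linear functionals (so an average of lower bounds is not directly a lower bound), is where care is needed.

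A cleaner alternative, which I would actually attempt, is to realize $\sum_k a_ku_k$ as a convex combination of $1$-vectors. Write $\sum_k a_k\chi_{E_k}$ as a layer-cake sum $\sum_j \lambda_j\chi_{L_j}$, where the level sets $L_j=\bigcup\{E_k: a_k\geq t_j\}$ are nested, the weights satisfy $\lambda_j\geq 0$, and $\sum_j\lambda_j=1$ with $L_1$ the largest layer. Then choose a single $F\in\mc{F}$ with $F\subseteq L_1$ and $\mu(F)\geq\eps\mu(L_1)$. This still does not control the other layers, so the expected fix is to use the hypothesis on the top layer $L_{\mathrm{top}}=\{E_k:a_k=1\}$ and iterate, which only yields a bound like $\|\sum a_ku_k\|\geq\eps\max_j\lambda_j\mu(L_j)$.

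\emph{Step 3 (pass to a subsequence).} To get a genuine $\ell_1$ estimate, I would apply a Ramsey/Rosenthal-type dichotomy to the block sequence $(u_k)$. Since $X_\mc{F}$ is $c_0$-saturated when $\mc{F}$ is compact, $(u_k)$ has a block subsequence of normalized averages that tends to a $c_0$-like sequence. Applying the hypothesis to the union of the supports of those averages (again sets of the form $E$) gives norm $\geq\eps\cdot$(number of blocks), up to normalization. This contradicts a $c_0$-upper estimate $\lesssim 1$ once the number of blocks exceeds $C/\eps$. This final step, extracting the $c_0$ block sequence and checking that the averaged blocks are, up to $\sum\delta_k$ errors, again of the form $w\chi_E$ for suitable finite sets, is the step I expect to require the most work.
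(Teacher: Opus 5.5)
Your argument does not close, and the gap is exactly where the content of the lemma lies. (The paper itself does not prove this lemma; it quotes it from \cite[Theorem 7.2]{BNF20}.)

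\textbf{Where the argument fails.} Steps 1--2 give only the flat estimate $\|\sum_{k\in B}u_k\|_{\mc{F}}\geq\eps|B|$. This by itself never forces $\ell_1$: the unit vector basis of the Schreier space satisfies it with $\eps=1/2$.

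Step 3 therefore has to supply a $c_0$-block sequence $(v_j)$ of $(u_k)$ whose partial sums are, up to $\sum_k\delta_k$, a common multiple of some $w\chi_E$, so that the hypothesis applies to them. $c_0$-saturation gives nothing of the sort, for three reasons.
\begin{enumerate}
\item It only yields block sequences with arbitrary coefficients, typically repeated averages.
\item Flat averages need not be $c_0$ at all. In $X_{\mc{S}_2}$, if $\min B_{j_0}\geq m$, the top halves of $m$ successive supports form an $\mc{S}_2$-set. So no block sequence of flat averages of the unit vectors is equivalent to the $c_0$-basis.
\item Flat averages over blocks of different sizes do not even sum to a multiple of some $w\chi_E$.
\end{enumerate}
Nor can you perturb. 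The vector $\sum_kc_ku_k=w\sum_kc_k\chi_{E_k}$ differs from $w\chi_{E'}$, where $\mu(E'\cap E_k)\approx c_k\mu(E_k)$, by $(1-c_k)w$ on $E'\cap E_k$ and by $c_kw$ on $E_k\setminus E'$. This difference is in general not small in $\|\bullet\|_{\mc{F}}$.

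So the obstacle you found in Step 2 is still there: $\mu(F)$ is not controlled by $\sum_ka_k\mu(F\cap E_k)$. Looking for the contradiction inside $X_{\mc{F}}$ along $(u_k)$ is aiming at the wrong space.

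\textbf{The missing idea: count blocks instead of mass.} Put
\[ \mc{F}'=\big\{B\in[\NN]^{<\infty}:\exists F\in\mc{F}\ \forall k\in B\ F\cap E_k\neq\0\big\}\in\mrm{FHC}. \]

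First, if $\mc{F}$ is compact, so is $\mc{F}'$. Suppose $B=\{k_1<k_2<\dots\}$ is infinite with $[B]^{<\infty}\subseteq\mc{F}'$. Since the $E_k$ are finite and $\mc{F}$ is hereditary, the selectors $s\in\prod_{i\le m}E_{k_i}$ with $\mrm{ran}(s)\in\mc{F}$ form an infinite, finitely branching tree. K\"onig's lemma then produces an infinite $A\in\overline{\mc{F}}$.

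Second, set up the estimate. Take $\sum_k\delta_k\le\eps/4$ (w.l.o.g. $\eps\le 1$) and scalars $0\le a_k\le 1=\max_ka_k$. Use your sets $E'_k\subseteq E_k$ with $|\mu(E'_k)-a_k|<\delta_k$, and let $F\in\mc{F}$ with $F\subseteq E'=\bigcup_kE'_k$ and $\mu(F)\geq\eps\mu(E')$. The correct upper bound goes through the set $B_F=\{k:F\cap E'_k\neq\0\}\in\mc{F}'$ of blocks hit:
\[ \eps\sum_ka_k-\frac{\eps}{4}\le\mu(F)\le\sum_{k\in B_F}\mu(E'_k)\le\sum_{k\in B_F}a_k+\frac{\eps}{4}. \]
Hence $\|\sum_ka_ke_{\mc{F}',k}\|_{\mc{F}'}\geq\frac{\eps}{2}\sum_ka_k$.

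Third, conclude. By unconditionality, $X_{\mc{F}'}$ is isomorphic to $\ell_1$. Then $\mc{F}'$ is not compact, by the classical Pt\'ak lemma or by the $\ell_1$-characterization of compactness you invoked. This is a contradiction.

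With this reduction your Steps 1--2 already contain everything needed, and Step 3 is unnecessary.
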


\begin{lem}\label{brutal-copy} 
Let $\mc{F}\in\mrm{FHC}$ and suppose that a normalized block basic sequence $(x_n)$ in $X_\mc{F}$ is $K$-equivalent to the basis of $\ell_1$. Then there is an $A\in \overline{\mathcal{F}}$ such that $\|P_A(x_n)\|_\mathcal{F}>1/(2K)$ for infinitely many $n$.
\end{lem}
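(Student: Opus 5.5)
We will apply the variant of Pták's Lemma (Lemma \ref{Ptak}) to an auxiliary family built from the $x_n$'s rather than to $\mc{F}$ directly. For each $n$ let $E_n=\supp(x_n)$; these are finite and pairwise disjoint. Consider the family
\[ \mc{G}=\Big\{G\in[\NN]^{<\infty}:\ \text{there is } F\in\mc{F}\ \text{with}\ \|P_F(x_n)\|_\mc{F}>1/(2K)\ \text{for every}\ n\in G\Big\}, \]
together with all singletons; it is hereditary and covers $\NN$, so $\mc{G}\in\mrm{FHC}$. If $\mc{G}$ is not compact, there is an infinite $B=\{n_1<n_2<\dots\}$ with $[B]^{<\infty}\subseteq\mc{G}$. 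For each $j$ pick $F_j\in\mc{F}$ witnessing $\{n_1,\dots,n_j\}\in\mc{G}$. Since $\mc{P}(\NN)$ is compact, a subsequence of $(F_j)$ converges to some $A\in\overline{\mc{F}}$; the convergence is pointwise, and each $E_{n_i}$ is finite. Hence for every fixed $i$, eventually $F_j\cap E_{n_i}=A\cap E_{n_i}$, so $\|P_A(x_{n_i})\|_\mc{F}=\|P_{F_j}(x_{n_i})\|_\mc{F}>1/(2K)$ for all $i$. This is the desired $A$.

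It therefore remains to show that $\mc{G}$ is not compact, using Lemma \ref{Ptak} with $\mu$ the counting measure on $\NN$ (indices of the sequence) and $\eps=1/(2K)$ up to an adjustment. First I would establish the key estimate: given a finite $E\subseteq\NN$, the $\ell_1$ lower estimate gives
\[ \Big\|\sum_{n\in E}x_n\Big\|_\mc{F}\geq |E|/K, \]
so there is $F\in\mc{F}$ with $\sum_{n\in E}\|P_F(x_n)\|_{\ell_1}\geq \sum_{n\in E}\sum_{i\in F}|x_n(i)|\geq |E|/K$, using disjointness of supports. Each summand satisfies $\|P_F(x_n)\|_\mc{F}\le\|x_n\|_\mc{F}=1$, and for a single $F$ the quantity $\sum_{i\in F\cap E_n}|x_n(i)|$ is at most $\|P_F(x_n)\|_\mc{F}$. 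Let $G=\{n\in E:\|P_F(x_n)\|_\mc{F}>1/(2K)\}$. Splitting the sum, we get $|E|/K\leq |G|\cdot 1+|E|\cdot 1/(2K)$, hence $|G|\geq|E|/(2K)$. The set $G$ lies in $\mc{G}$ and $G\subseteq E$, so the hypothesis of Lemma \ref{Ptak} holds with $\eps=1/(2K)$. Since $\mu(\NN)=\infty$ but $\mu(\{n\})=1\not\to0$, I would rescale by using $\mu(\{n\})=1/n$ instead, or by taking a weighted measure. The cleanest approach is to rerun the above estimate with weights: apply the $\ell_1$ estimate to $\sum_{n\in E}\mu(\{n\})x_n$. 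This yields $\sum_{n\in E}\mu(\{n\})\|P_F(x_n)\|\geq\mu(E)/K$, and the same splitting gives $\mu(G)\geq\mu(E)/(2K)$. Taking $\mu(\{n\})=1/n$, which has divergent total mass, satisfies the hypotheses.

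The main obstacle is the bookkeeping in the final step: the witness $A$ must work simultaneously for infinitely many $n$. The definition of $\mc{G}$, with a common $F$ for all of $G$, together with the pointwise limit argument using finiteness of the supports, is exactly what handles this. The strict inequality $>1/(2K)$ survives the limit because the values $\|P_{F_j}(x_{n_i})\|_\mc{F}$ eventually stabilise rather than merely converge. In the splitting step, using "$\geq$" versus "$>$" may require an arbitrarily small slack; if so, I would replace $1/(2K)$ by a slightly smaller threshold in the counting step while keeping the strict inequality.
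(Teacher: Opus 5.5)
Your proposal is correct and follows essentially the same route as the paper: the same auxiliary family $\mc{G}$, then Lemma~\ref{Ptak} with weights $\mu(\{n\})=1/n$ applied to $\sum_{n\in E}\frac{1}{n}x_n$ (the counting-measure detour is unnecessary), and finally a pointwise-convergent subsequence of the witnesses $F_j$ giving $A\in\overline{\mc{F}}$. The differences are cosmetic: the paper bounds the bad part by $\mu(E\setminus G)/(2K)$ to get $\eps=1/(2K-1)$ instead of your $1/(2K)$, and adding singletons to $\mc{G}$ is superfluous because $\|x_n\|_\mc{F}=1>1/(2K)$ already puts them there.
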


\begin{proof} Define the family
\[ \mathcal{G} = \big\{G\in [\mathbb{N}]^{<\infty}: \exists\,F\in \mathcal{F}\;\forall\,n\in G\; \|P_F(x_n)\|_\mc{F} > 1/(2K)\big\}\in\mrm{FHC}. \]

First we show that for every finite $E\subseteq\mathbb{N}$ there is a $G\in\mc{G}\clrest E$ such that 
\[\tag{$\ast$} \frac{1}{2K-1} \sum_{n\in E} \frac{1}{n}\leq \sum_{n\in G} \frac{1}{n}.\] 
As $(1/K)\sum_{n\in E}1/n\leq \|\sum_{n\in E}(1/n)x_n\|_\mc{F}$ and $x_n$ is finitely supported, we can fix an $F\in \mathcal{F}$ such that 
\[ \frac{1}{K} \sum_{n\in E} \frac{1}{n}\leq \Big\|\sum_{n\in E} \frac{1}{n}P_F(x_n)\Big\|_\mc{F}.\] 
Therefore, with $G = \{n \in E: \| P_F(x_n)\|_\mc{F} > 1/(2K)\}\in\mc{G}$,  
\begin{align*} 
\frac{1}{K} \sum_{n\in E} \frac{1}{n} &\leq \Big\| \sum_{n\in G} \frac{1}{n} P_F(x_n)\Big\|_\mc{F} + \Big\| \sum_{n\in E \setminus G} \frac{1}{n} P_F(x_n)\Big\|_\mc{F}\\ 
&\leq \sum_{n\in G} \frac{1}{n} + \frac{1}{2K}\sum_{E\setminus G} \frac{1}{n}=\sum_{n\in G}\frac{1}{n} + \frac{1}{2K}\sum_{n\in E}\frac{1}{n} - \frac{1}{2K}\sum_{n\in G}\frac{1}{n} \end{align*}
and this implies ($\ast$).

\smallskip
Now, applying Lemma \ref{Ptak} for the measure $\mu(E) = \sum_{n\in E} 1/n$ on $\NN$, the family $\mc{G}$, and $\eps=1/(2K-1)$, we conclude that $\mathcal{G}$ cannot be compact and so there is an infinite $B\in \overline{\mathcal{G}}$. For $m\in \NN$ fix an $F_m \in \mathcal{F}$ such that $\| P_{F_m}(x_n)\|_\mc{F}>1/(2K)$ for each $n\in B\cap \{1,\dots,m\}\in\mc{G}$; a subsequence $(F_m)_{m\in H}$ of $(F_m)$ tends to an $A\in\overline{\mc{F}}$. We claim that $\| P_A(x_n)\|_\mc{F} > 1/(2K)$ for every $n\in B$. Fix an $n\in B$ and a $k$ above the support of $x_n$, then $A\cap \{1,\dots,k\}=F_m\cap\{1,\dots,k\}$ for all but finitely many $m\in H$; pick such an $m\geq n$, then 
\[ \big\|P_A(x_n)\big\|_\mc{F}\geq \big\|P_{A\cap\{1,\dots,k\}}(x_n)\big\|_\mc{F}=\big\|P_{F_m\cap\{1,\dots,k\}}(x_n)\big\|_\mc{F}=\big\|P_{F_m}(x_n)\big\|_\mc{F}> \frac{1}{2K}.\qedhere\]
\end{proof}

\begin{thm}\label{c_0-l_1} The Sierpi\'nski space does not contain copies of $c_0(\ell_1)$.
\end{thm}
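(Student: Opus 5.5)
The plan is to argue by contradiction: assume that $X_f$ contains a copy of $c_0(\ell_1)$, use the two preceding lemmas to find, for each row of $c_0(\ell_1)$, an infinite chain that carries a fixed amount of norm, and then splice finitely many such pieces into a single finite chain that sees too much norm.

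\emph{Step 1: one chain per row.} Suppose $X_f$ contains a copy of $c_0(\ell_1)$. By Lemma \ref{blockbasic} there is a normalized block basic sequence $(x_{n,k})$ with respect to $(e_{f,n})$ that is $K$-equivalent to the canonical basis of $c_0(\ell_1)$. Fix a row $n$. The sequence $(x_{n,k})_k$ is normalized, block, and $K$-equivalent to the basis of $\ell_1$, so Lemma \ref{brutal-copy} gives an $A_n\in\overline{\mc{C}_f}$ with $\|P_{A_n}(x_{n,k})\|_f>1/(2K)$ for infinitely many $k$. The supports are finite and pairwise disjoint, so for every $m$ there are such $k$ with $\supp(x_{n,k})\subseteq(m,\infty)$. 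In particular $A_n$ is infinite, i.e.\ an infinite chain $a^n_1<a^n_2<\dots$ with $f(a^n_1)<f(a^n_2)<\dots$. Let $L_n=\lim_i f(a^n_i)\in(-\infty,\infty]$. For a given threshold $m$, choose such a $k$ with support beyond $m$ and let $C=A_n\cap\supp(x_{n,k})$. Then $C$ is a finite piece of the tail of $A_n$ with $\|P_C(x_{n,k})\|_f>1/(2K)$, and all $f$-values on $C$ are $<L_n$. If $m$ is large enough, they are also as close to $L_n$ as we want, or above any prescribed bound when $L_n=\infty$.

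\emph{Step 2: reduce to a monotone sequence of limits.} Pass to a subsequence of rows along which $(L_n)$ is either nondecreasing (this includes the constant-$\infty$ case) or strictly decreasing. This is harmless, because any subfamily of rows is still equivalent to the basis of $c_0(\ell_1)$.

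\emph{Step 3: build one long chain.} Fix $N$. In the nondecreasing case, choose pieces $C_1,\dots,C_N$ from rows $1,\dots,N$ in this order, each from a distinct vector $x_{n,k_n}$. Choose $C_{n+1}$ with all indices above $\max(C_1\cup\dots\cup C_n)$ and all $f$-values above $\max f[C_1\cup\dots\cup C_n]$. This is possible since those values are $<L_n\le L_{n+1}$ and the tail of $A_{n+1}$ approaches $L_{n+1}$ from below (or tends to $\infty$). In the strictly decreasing case, reverse the order in which the pieces are placed. Take the piece of row $N$ first, then a piece of row $N-1$ placed later with $f$-values in $(\max f[\text{previous pieces}],L_{N-1})$, which is possible since the previous values are $<L_N<L_{N-1}$. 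Continue down to row $1$. Either way $D=C_1\cup\dots\cup C_N\in\mc{C}_f$, since both the $\NN$-order and the $f$-order increase along it. The blocks are disjoint and $D\cap\supp(x_{n,k_n})=C_n$, so
\[\Big\|\sum_{n\le N}x_{n,k_n}\Big\|_f\ \ge\ \sum_{n\le N}\|P_{C_n}(x_{n,k_n})\|_f\ >\ \frac{N}{2K}.\]
On the other hand, $(x_{n,k_n})_{n\le N}$ consists of vectors from distinct rows, i.e.\ distinct $\ell_1$-copies, so $K$-equivalence with the $c_0(\ell_1)$ basis gives $\|\sum_{n\le N}x_{n,k_n}\|_f\le K$. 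Taking $N>2K^2$ yields the contradiction.

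The main obstacle is Step 3. The pieces have to be chosen so that the union is simultaneously increasing in $\NN$ and in $f$, and this is exactly why we pass to a subsequence of rows along which the limits $L_n$ are monotone. One should also double-check that ``infinitely many $k$'' in Lemma \ref{brutal-copy} really lets each piece sit arbitrarily far out in the tail of $A_n$, and hence arbitrarily close below $L_n$ in $f$-value. This follows from the supports being finite and pairwise disjoint.
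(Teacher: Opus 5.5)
Your proof is correct and follows the paper's argument essentially step by step: Lemma \ref{blockbasic}, then Lemma \ref{brutal-copy} applied row by row, then splicing tail pieces of the chains $A_n$ into one finite chain ordered by the limits of the $f$-values. The only cosmetic difference is that the paper skips your Step 2. It simply takes any $m$ rows and arranges them in order of nondecreasing $r_n=\sup_k\max h[F_{n,k}]$ (your $L_n$), since the order of the rows in $\sum x_{n,k_n}$ plays no role.
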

\begin{proof} Fix a bijection $h\colon \mathbb{N} \to \mathbb{Q}$ and suppose for the contradiction that a normalized basic sequence $(x_{n,k})$ in $X_h$ is $K$-equivalent to the canonical basis of $c_0(\ell_1)$, $n$ along the $c_0$-product, $k$ along the copies of $\ell_1$. Applying Lemma \ref{blockbasic}, we can assume that $(x_{n,k})$ is a block basic sequence with respect to $(e_{h,n})$; and, by thinning out the sequence if necessary, that $\mrm{supp}(x_{n,k})<\mrm{supp}(x_{n,k+1})$ for every $n,k$.\footnote{Given non-empty finite subsets $E$ and $F$ of a linear order $(L,\leq)$, we write $E<F$ if $\max(E)<\min(F)$, also, $E\leq F$ analogously.}  Applying Lemma \ref{brutal-copy}, for every $n$ there is an $A_n\in\overline{\mc{C}}_h$ such that $\|P_{A_n}(x_{n,k})\|_h>1/(2K)$ for infinitely many $k$. Keeping these $k$s only, we can assume that $\| P_{A_n}(x_{n,k}) \|_h > 1/(2K)$ for every $n,k$. Let $F_{n,k}=A_n\cap \mrm{supp}(x_{n,k})$. 

To finish the proof, we will show that for every $m\in\NN$ there are an $I\subseteq\mathbb{N}$ of size $m$ and a sequence $(k_n)_{n\in I}$ such that $\bigcup_{n\in I} F_{n,k_n}\in \mc{C}_h$. This is impossible because then $\|\sum_{n\in I}x_{n,k_n}\|_h\geq \|\sum_{n\in I}P_{F_{n,k_n}}(x_{n,k_n})\|_h>|I|/2K$ contradicting the $K$-equivalence along the $c_0$-product with $m=|I|\geq 2K^2$.

For each $n$ let $r_n = \sup_k \max h[F_{n,k}]\in \mbb{R}\cup\{\infty\}$. Notice that $h[F_{n,k}]<h[F_{n,k+1}]<r_n$ for every $n,k$ because $F_{n,k}<F_{n,k+1}$ and $\bigcup_k F_{n,k}\subseteq A_n\in\overline{\mc{C}}_h$. Given an $m$, there is a one-to-one (but not necessarily increasing) sequence $(n_i)_{i=1}^m$ such that $r_{n_1}\leq r_{n_2}\leq\dots\leq r_{n_m}$. Let $I = \{n_i\colon i=1,\dots,m\}$ and let $k_{n_1} = 1$. Suppose that we have defined $k_{n_i}$ for some $i<m$. If $k$ is large enough, then $F_{n_i,k_{n_i}} < F_{n_{i+1},k}$, and, since $h[F_{n_i,k_i}]<r_{n_i}\leq r_{n_{i+1}}$, $h[F_{n_i,k_{n_i}}] < h[F_{n_{i+1},k}]$ also holds for all but finitely many $k$. Pick such a $k$ to be $k_{n_{i+1}}$, then $\bigcup_{i=1}^m F_{n_i,k_i}\in\mc{C}_h$.
\end{proof}

\begin{que}
Does the Sierpi\'nski space contain a copy of $\ell_1(c_0)$?
Is there a reasonable characterization of those Banach spaces with unconditional basis (or those combinatorial spaces) which embed in the Sierpi\'nski space?
\end{que}

\section{Combinatorial spaces emulated in Sierpi\'nski spaces}\label{emulations}

In this section we study canonical subspaces of the Sierpi\'nski space(s). For example, each of them contains obvious (complemented isometric) copies of $c_0$ and $\ell_1$: Given $f\in\mrm{Bij}(\NN,\mbb{Q})$, there are both infinite chains and infinite anti-chains with respect to $\leq_f$. If $D\in \overline{\mc{C}}_f$ is infinite, then $(e_{f,n})_{n\in D}$ is isometrically equivalent to the basis of $\ell_1$, and similarly, if $B\in\overline{\mc{A}}_f$ is infinite, then $(e_{f,n})_{n\in B}$ is isometrically equivalent to the basis of $c_0$.

\smallskip
Below we will describe a general scheme of producing subspaces of the Sierpi\'nski space. First of all, notice that the definitions of $\leq_f$, $G_f$, $\mc{C}_f$, and $X_f$ make perfect sense when starting with an injection $\theta:\mathbb{N} \to \mathbb{Q}$ instead of a bijection $f$. Moreover, the basis $(e_{\theta,n})$ of the resulting combinatorial space $X_\theta$ is isometrically equivalent to a subbasis of $(e_{f,n})$ in a `proper' Sierpi\'nski space $X_f$: Simply extend $\mbb{N}$ to a larger copy $\NN'$ of the natural numbers by adding intermediate points and extend $\theta$ to a bijection $f$ between $\NN'$ and $\mbb{Q}$. In particular, every Sierpi\'nski space contains a complemented isometric copy of $X_\theta$. We will embed certain combinatorial spaces in these `partial' Sierpi\'nski spaces. To emphasize that the underlying set of the combinatorial space $X_\mc{F}$ we wish to embed in an $X_\theta$ does not play any role (unlike $\NN$ in the defintion of the Sierpi\'nski spaces), we will work with a general countable $\Omega$. 
  
\begin{df}
Let $\mc{F}\in\mrm{FHC}(\Omega)$. We say that a pair $((I_t)_{t\in \Omega},\theta)$ \emph{emulates} $\mc{F}$ if $(I_t)$ is a sequence of pairwise disjoint non-empty intervals on $\NN$, $\theta:\NN\to\mbb{Q}$ is an injection, and, with $x_t=\sum_{n\in I_t}e_{\theta,n}\in X_\theta$, for every $E\in [\Omega]^{<\infty}$
\[ \Big\|\sum_{t\in E}x_t\Big\|_\theta=|E|\;\text{ if and only if }\,E\in\mc{F}.\]
If there is such a pair, we say that $\mc{F}$ \emph{admits an emulation}.
\end{df}

Notice that $\|\sum_{t\in E}x_t\|_\theta=|E|$ for $E=\{t\}$ means that $\|x_t\|_\theta=1$, i.e. $\theta$ is decreasing on $I_t$. In other words, an emulation, for now over an infinite $\Omega$, can be seen as the pair of an enumeration $\Omega=\{t_n:n\in\NN\}$ and a sequence $(R_t)$ of pairwise disjoint non-empty finite subsets of $\mbb{Q}$: The enumeration gives us the order of the intervals, $I_{t_1}<I_{t_2}<\dots$, $|I_t|=|R_t|$, and $\theta$ is the unique function $\bigcup_tI_t\to\mbb{Q}$ which is decreasing on $I_t$ and $\theta[I_t]=R_t$; obviously, $\theta$ on $\NN\setminus \bigcup_tI_t$ does not play any role. It also follows that $I_t$ itself is not important but $|I_t|$ and the order of the intervals; this will be useful later when we will need to move / shift the intervals. In general, 
\[\Big\| \sum_{t\in E} x_t\Big\|_\theta=\max\Big\{|C|:C\in \mc{C}_\theta\,\text{ and }\,C\subseteq\bigcup_{t\in E}I_t\Big\}\] for every finite $E\subseteq\Omega$, where, of course, if $C\cap I_t\ne\0$ then $|C\cap I_t|=1$. In other words, if we label elements of $R_{t_n}$ with $n$, then $\| \sum_{t\in E} x_t\|_\theta$ is the length of the longest increasing sequence in $\bigcup_{t\in E} R_t$ with increasing labels.

Applying these observations, it is easy to check the following: 

\begin{fact}
If $((I_t),\theta)$ emulates $\mc{F}$, then $(x_t)\stackrel{i}{\sim}(e_{\mc{F},t})$; in particular, the Sierpi\'nski space contains an isometric copy of $X_\mc{F}$.
\end{fact}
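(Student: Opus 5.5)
The plan is to compute $\|\sum_{t\in E}c_tx_t\|_\theta$ for arbitrary finitely supported scalars $(c_t)$ and show it equals $\|\sum_{t\in E}c_te_{\mc{F},t}\|_\mc{F}=\max\{\sum_{t\in F}|c_t|:F\in\mc{F},\,F\subseteq E\}$.

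For the lower bound $\geq$, fix $F\in\mc{F}$ with $F\subseteq E$. By the emulation property, $\|\sum_{t\in F}x_t\|_\theta=|F|$, so there is a $C\in\mc{C}_\theta$ with $C\subseteq\bigcup_{t\in F}I_t$ and $|C|=|F|$. Since $\theta$ is decreasing on each $I_t$, any chain meets each $I_t$ in at most one point. Hence $C$ meets every $I_t$, $t\in F$, in exactly one point, and
\[\Big\|\sum_{t\in E}c_tx_t\Big\|_\theta\geq\sum_{n\in C}\Big|\sum_tc_tx_t(n)\Big|=\sum_{t\in F}|c_t|.\]

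For the upper bound $\leq$, let $C\in\mc{C}_\theta$ be arbitrary. Points outside $\bigcup_{t\in E}I_t$ contribute nothing, so we may assume $C\subseteq\bigcup_{t\in E}I_t$. Let $F=\{t\in E:C\cap I_t\ne\0\}$. Again $|C\cap I_t|=1$ for $t\in F$, so the value of $C$ on the vector is $\sum_{t\in F}|c_t|$. Moreover, $C$ witnesses $\|\sum_{t\in F}x_t\|_\theta\geq|F|$. The reverse inequality $\|\sum_{t\in F}x_t\|_\theta\leq|F|$ holds trivially, by the same one-point-per-interval observation. So the norm equals $|F|$, and the emulation property gives $F\in\mc{F}$. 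Hence the value of $C$ is at most $\|\sum_tc_te_{\mc{F},t}\|_\mc{F}$, and taking the supremum over $C$ gives the bound.

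This proves $(x_t)\stackrel{i}{\sim}(e_{\mc{F},t})$. The map $e_{\mc{F},t}\mapsto x_t$ therefore extends to an isometric embedding of $X_\mc{F}$ into $X_\theta$. Finally, as noted before the definition, $X_\theta$ is isometric to a (complemented) subspace of a genuine Sierpi\'nski space $X_f$, obtained by extending $\theta$ to a bijection on a larger copy of $\NN$. The only real point in the whole argument is the observation that a chain meets each $I_t$ at most once.
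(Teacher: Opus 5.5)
Your proof is correct and is essentially the verification the paper leaves as `easy to check'. Like the paper, you rely on $\theta$ being decreasing on each $I_t$: this comes from the emulation condition applied to singletons, which lie in $\mc{F}$ since $\mc{F}$ is hereditary and covers $\Omega$. Consequently every $C\in\mc{C}_\theta$ meets each $I_t$ at most once, which gives $\|\sum_{t\in E}x_t\|_\theta=\max\{|C|:C\in\mc{C}_\theta,\ C\subseteq\bigcup_{t\in E}I_t\}$. Your two-sided estimate for arbitrary coefficients, followed by passing from $X_\theta$ to a genuine Sierpi\'nski space, spells out exactly that argument.
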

	
\begin{exa}[emulating $\bigcup\mc{Q}$ and $\mrm{Fh}(\mc{Q})$]\label{c_0} Let $I_n = \{n\}$ for each $n$ and $\theta: \mathbb{N} \to \mathbb{Q}$ be increasing. Then $((I_n), \theta)$ emulates $[\NN]^{<\infty}$, and so $(x_n)$ generates a copy of $\ell_1$. Similarly, if $\theta$ is decreasing, then it emulates $[\NN]^{\leq 1}$. 

Regarding $\bigcup\mc{Q}$ and $\mrm{Fh}(\mc{Q})$ from Example \ref{farah}, we show that both of them admit emulations assuming every $\mc{Q}_n$ does (see also the remark below). Let $\NN=\bigcup_nV_n$ be a decomposition into non-empty finite sets, $\mc{Q}=(\mc{Q}_n)$ be a sequence where $\mc{Q}_n\in\mrm{FHC}(V_n)$, and assume that $((I^n_t)_{t\in V_n},\theta_n)$ emulates $\mc{Q}_n$. 

We can assume that $I^1:=\bigcup_{t\in V_1}I^1_t<I^2:=\bigcup_{t\in V_2}I^2_t<\dots$ and $\bigcup_nI^n=\mbb{N}$. We can also move the ranges of each $\theta_n$, if we shift them to the left such that $\theta_1[I^1]>\theta_2[I^2]>\dots$, then $((I^n_t),\bigcup_n\theta_n)$ emulates $\bigcup\mc{Q}$; if we shift them to the right such that $\theta_1[I^1]<\theta_2[I^2]<\dots$, then $((I^n_t),\bigcup_n\theta_n)$ emulates $\mrm{Fh}(\mc{Q})$.
\end{exa}

\begin{rem}\label{balazs} (Bal\'azs Keszegh) 
In general, not every finite family, that is, $\mc{F}\in\mrm{FHC}(V)$ for some finite $V\ne\0$, admits emulations. Moreover, among those which satisfy $\mc{F}\subseteq [V]^{\leq 2}$, most of them do not admit emulations. This follows from the observation that in this case we can assume that $|I_t|\leq 2$ for every $t\in V$, and hence, with $n=|V|$, the number of emulations is $\leq n^nn^{2n}$ but the number of such, pairwise non-isomorphic families is asymptotically $2^{\binom{n}{2}}/n!$. Probably the simplest concrete example of such an $\mc{F}$ without emulation is $\mc{C}(C_5)$.  
\end{rem}

\begin{exa}[$\ell_1\oplus_0\ell_1$ and $c_0\oplus_1 c_0$]
Let $\Omega=\{0,1\}\times\NN$ and $\mc{F}=[\{0\}\times\NN]^{<\infty}\cup [\{1\}\times\NN]^{<\infty}$, in other words, $\mc{F}$ is the family generating $\ell_1\oplus_0\ell_1$, the product equipped with the max norm. Of course, Sierpi\'nski spaces contain copies of $\ell_1\oplus_0\ell_1$ (because it is isomorphic to $\ell_1$). At the same time, it is easy to show that $\mc{F}$ admits no emulation: Assume on the contrary that $((I_{i,n}),\theta)$ emulates $\mc{F}$ ($i\in\{0,1\}$ and $n\in\NN$). Ordering the intervals consecutively $J_k=I_{i_k,n_k}$, $J_1<J_2<\dots$, we can find $k<m<k'$ such that $i_k=i_{k'}\ne i_m$ and we may assume that $i_k=0$. This means that $\theta[J_m]<\theta[J_k]$ because otherwise $\{(0,n_k),(1,n_m)\}$ would belong to $\mc{F}$, and similarly $\theta[J_{k'}]<\theta[J_m]$, and so $\theta[J_{k'}]<\theta[J_k]$ but this is impossible because $\{(0,n_k),(0,n_{k'})\}\in\mc{F}$.  

The same holds for the family $\mc{F}=[\{0,1\}\times\NN]^{\leq 1}\cup\{\{(0,n),(1,m)\}:n,m\in\NN\}$ generating $c_0\oplus_1c_0$, the product equipped with the sum norm: Fix $J_k<J_m<J_{k'}<J_{m'}$ such that $i_k=i_{k'}=0$ and $i_m=i_{m'}=1$. Then $\theta[J_{m'}]<\theta[J_m]$ because $\{(1,n_m),(1,n_{m'})\}\notin\mc{F}$, $\min\theta[J_m]<\max\theta[J_{k'}]$ because $\{(1,n_m),(0,n_{k'})\}\in\mc{F}$, and $\theta[J_{k'}]<\theta[J_k]$ because $\{(0,n_k),(0,n_{k'})\}\notin\mc{F}$, therefore $\theta[J_{m'}]<\theta[J_k]$, a contradiction becaue $\{(0,n_k),(1,n_{m'})\}\in\mc{F}$. 
\end{exa}

\begin{que}
Does the Sierpi\'nski space contain isometric copies of $\ell_1\oplus_0\ell_1$ or $c_0\oplus_1 c_0$?
\end{que}

In the rest of this section, we will discuss the Schreier families and their possible emulations (for more details and results on these families and the generated spaces, see e.g. \cite{Alspach-Argyros}, \cite{Castillo-Gonzales},
\cite{Gasparis-Leung}). We will need the following (see \cite{CauseySzlenk} for $\mbf{S}$ and $\mbf{D}$): The \emph{Schreier operation} $\mbf{S}:\mrm{FHC}\to\mrm{FHC}$, 
\[ \mbf{S}(\mathcal{F}) =\{\0\}\cup  \bigg\{\bigcup_{k=1}^mF_k:m\in\NN,F_k \in \mathcal{F}\setminus\{\0\},\text{ and }\,\{m\}\leq F_1 < \dots < F_m\bigg\},\]
the \emph{diagonal union operation}  $\mbf{D}:\mrm{FHC}^\NN\to\mrm{FHC}$,
\[ \mbf{D}\big((\mc{F}_k)_k\big)=\bigcup_{k=1}^\infty\mc{F}_k\clrest[k,\infty)=\{\0\}\cup \big\{F\colon F\in \mathcal{F}_k\setminus\{\0\}, k\leq \min F\big\},\]
and a combination $\mbf{D}^*:\mrm{FHC}^\NN\to\mrm{FHC}$ of $\mbf{S}$ and $\mbf{D}$,
\[ \mbf{D}^*\big((\mc{F}_k)_k\big) = \{\0\}\cup\bigg\{\bigcup_{k=1}^mF_k:m\in\NN,F_k \in \mathcal{F}_k\setminus\{\0\},\text{ and }\,\{m\}\leq F_m < \dots < F_1\bigg\}.\]
In order to define the Schreier families, we fix a \emph{ladder system}
\[ \boldsymbol\xi=\big((\xi^\ga_k)_{k=1}^\infty:\ga\in \mrm{Lim}(\om_1)\big)\]
where $\mrm{Lim}(\om_1)=\{$limit ordinals below $\om_1\}$ and $\xi^\ga_1<\xi^\ga_2<\dots<\ga$ is unbounded in $\ga$ for each such $\ga$. Given a ladder system $\boldsymbol\xi$, the associated \emph{Schreier} and \emph{Schreier$^*$} sequences, $\mc{S}_\al(\boldsymbol\xi)$ and $\mc{S}^*_\al(\boldsymbol\xi)$ are constructed by recursion on $\al<\om_1$: Let $\mc{S}_0(\boldsymbol\xi)=\mc{S}^*_0(\boldsymbol\xi)=[\NN]^{\leq 1}$, in the successor step let 
\[ \mc{S}_{\al+1}(\boldsymbol\xi)=\mbf{S}\big(\mc{S}_\al(\boldsymbol\xi)\big)\,\text{ and }\,\mc{S}^*_{\al+1}(\boldsymbol\xi)=\mbf{S}\big(\mc{S}^*_\al(\boldsymbol\xi)\big),\] 
and for a limit $\ga$ let 
\[ \mc{S}_\ga(\boldsymbol\xi)= \mbf{D}\big(\big(\mathcal{S}_{\xi^\ga_k}(\boldsymbol\xi)\big)_k\big)\,\text{ and }\,\mc{S}^*_\ga(\boldsymbol\xi)= \mbf{D}^*\big(\big(\mathcal{S}^*_{\xi^\ga_k}(\boldsymbol\xi)\big)_k\big).\] 

It is common to assume a bit more about the generating ladder system, namely, we call $\boldsymbol\xi$ \emph{standard} if 
\[\tag{acc} \mc{S}_{\xi^\ga_1}(\boldsymbol\xi)\subseteq\mc{S}_{\xi^\ga_2}(\boldsymbol\xi)\subseteq\dots\,\text{ is increasing for every }\,\ga.\] 
It is easy to construct standard ladders by recursion applying the following: An easy induction shows that given an arbitrary ladder system $\boldsymbol\xi$ and $\al<\be<\om_1$, 
\[ \text{there is an }\,m\in\NN\,\text{ such that }\,\mc{S}_\al(\boldsymbol\xi)\clrest [m,\infty)\subseteq\mc{S}_\be(\boldsymbol\xi),\]
and hence $\mc{S}_\al(\boldsymbol\xi)\subseteq \mc{S}_{\be+m}(\boldsymbol\xi)$. Now, if $\boldsymbol\xi$ is standard, then so is $\boldsymbol\xi^+=((\xi^\ga_k+1)_{k=1}^\infty:\ga\in\mrm{Lim}(\om_1))$ and it is straightforward to check that 
\[ \mc{S}_\al(\boldsymbol\xi)\subseteq\mc{S}_\al^*(\boldsymbol\xi)\subseteq \mc{S}_\al(\boldsymbol\xi^+)\,\text{ for every }\,\al.\]

From now on, if we simply write $\mc{S}_\al$ and $\mc{S}^*_\al$, we refer to any families of the form $\mc{S}_\al(\boldsymbol\xi)$ and $\mc{S}^*_\al(\boldsymbol\xi)$ for some standard $\boldsymbol\xi$. 

\smallskip
We will use the following preservation theorem: 

\begin{thm}\label{lem:Schreier} 
Suppose that $\mc{F},\mc{F}_k\in\mrm{FHC}$, $k\in\NN$. Then the following holds:
\begin{itemize}\setlength\itemsep{0.1cm}
\item[(1)]
If $\mc{F}$ has an emulation $((I_n),\theta)$ satisfying $I_1<I_2<\dots$, then so does the family $\mbf{S}(\mc{F})$.
\item[(2)] If each $\mc{F}_k$ has an emulation $((I^k_n),\theta_k)$ such that $I^k_1<I^k_2<\dots$, then so does the family 
$\mbf{D}^*((\mathcal{F}_k)_k)$.	
\end{itemize}
\end{thm}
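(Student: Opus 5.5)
The plan is to build the new emulation out of several rational ``windows''. Each window holds an order-isomorphic copy of the given emulation, and the order of the windows inside $\mbb{Q}$ controls how a chain may pass from one copy to the next. Throughout I use the reformulation from the discussion after the definition of emulation. If the intervals are ordered $J_1<J_2<\dots$ and $\theta$ is decreasing on each $J_n$, then $\|\sum_{n\in E}x_n\|_\theta$ is the length of the longest sequence that is increasing both in its values and in its labels, taken inside $\bigcup_{n\in E}\theta[J_n]$, using at most one point from each block. I also use that every open interval of $\mbb{Q}$ is order isomorphic to $\mbb{Q}$, so a whole emulation can be transported into any such interval.

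For (1), fix pairwise disjoint rational open intervals $W_1>W_2>W_3>\dots$ and order isomorphisms $\phi_j:\mbb{Q}\to W_j$. For each $n$, the new block $J_n$ carries the values $\phi_j[\theta[I_n]]$ for $j=1,\dots,n$, that is, $n$ copies of the old block. These are listed decreasingly, so $\theta'$ is decreasing on $J_n$: all of the $W_1$-copy first, then the $W_2$-copy, and so on. The blocks are placed consecutively, $J_1<J_2<\dots$, which preserves the ordered-interval hypothesis needed for iteration.

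Let $C$ be a chain in $\bigcup_{n\in E}J_n$ with one point per element of $E$. Its values increase, so the window indices it visits are non-increasing. Hence $E$ splits into consecutive runs $E_1<E_2<\dots<E_r$, each run lying in a single window. A run lying in window $j$ is a full-length chain in the copy $\phi_j\circ\theta$, so $E_i\in\mc{F}$ by the emulation property. The first point lies in a window of index $j_1\le\min E$, because element $n$ only has copies in windows $1,\dots,n$. The window indices strictly decrease from run to run, so $r\le j_1\le\min E$, and hence $E\in\mbf{S}(\mc{F})$.

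Conversely, suppose $E=F_1\cup\dots\cup F_m$ with $\{m\}\le F_1<\dots<F_m$. Place $F_i$ in window $m+1-i$. This is allowed because every element of $F_i$ is at least $m$. Each $F_i\in\mc{F}$ yields a full chain inside its copy, and moving up a window increases the value. The concatenation is therefore a chain of length $|E|$. Finally, since $\|x_n\|=1$, every norm is at most $|E|$.

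For (2) I will use the same scheme with window $W_k$ carrying a copy of the emulation $\theta_k$ of $\mc{F}_k$. As before, element $n$ gets copies in windows $1,\dots,n$, and the windows are ordered $W_1>W_2>\dots$. Then a set $E=F_m\cup\dots\cup F_1$ with $m\le F_m<\dots<F_1$ and $F_k\in\mc{F}_k$ is realised by putting $F_k$ in window $k$. This gives $\|\sum_{t\in E}x_t\|=|E|$ exactly as above.

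The main obstacle is the converse in (2). A chain may skip windows, for example going from window $5$ directly to window $2$. Then it produces pieces in $\mc{F}_{j_1},\dots,\mc{F}_{j_r}$ with $j_1>\dots>j_r$ not consecutive, which is not literally of the form required by $\mbf{D}^*$; in (1) this issue does not arise because all copies emulate the same $\mc{F}$. I would first try to forbid skipping by the positions of the copies rather than by their values alone. The idea is to interleave, within each block, the copy for window $k$ with auxiliary ordering constraints between consecutive windows, so that any value-increasing transition must go from $W_k$ to $W_{k-1}$. If that fails, I would fall back on showing that skipping is harmless under the monotonicity (acc) available for standard ladders. Either way, the bookkeeping of block orderings and label monotonicity is routine once the window geometry is fixed.
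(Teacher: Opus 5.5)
Your part (1) is correct and is the paper's argument: the paper uses the windows $(-k,-k+1)$ and sets $\theta^*(j)=\theta(g^k_n(j))-k$ on the $k$-th copy $I^k_n$ inside $J_n$. Your construction and the forward inclusion in (2) also match the paper. The gap is the reverse inclusion in (2), which you leave open.

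You correctly note that a full-length chain gives runs $E_1<\dots<E_r$ in windows $k_1>\dots>k_r$, with $k_1\le\min E$ and $E_i\in\mathcal{F}_{k_i}$, and that indices may be skipped. Neither remedy you propose closes this. Changing the geometry to forbid skipping is not needed, and you do not say how to do it while keeping the chains the forward direction uses. Appealing to (acc) proves a different theorem: the statement concerns arbitrary $\mathcal{F}_k\in\mathrm{FHC}$, with no monotonicity. Also, (acc) is a property of the $\mathcal{S}_{\xi^\gamma_k}$, not of the $\mathcal{S}^*$ families to which (2) is applied. The closing claim that the rest is routine bookkeeping is not an argument.

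The missing observation is that skipping is harmless for every sequence in $\mathrm{FHC}$. The paper simply sets $F_k=\{n\in E: j_n \text{ lies in window } k\}$ for $k\le\min E$, some of them empty, exactly as in (1). This is consistent with the fact that any emulated family is automatically hereditary.

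If you want nonempty pieces, as in the displayed definition of $\mathbf{D}^*$, fill the skipped indices. Use that each $\mathcal{F}_k$ is hereditary and contains every singleton, since it covers $\mathbb{N}$.
\begin{itemize}
\item If $|E|\le\min E$, write $E$ as $|E|$ singletons with indices $|E|,\dots,1$.
\item Otherwise $N:=|E|>\min E\ge k_1$. Enumerate $E=\{e_1<\dots<e_N\}$ and put $\iota(e_1)=k_1$. Let $\iota(e_{t+1})=\iota(e_t)$ if $e_t,e_{t+1}$ lie in the same run $E_i$ with $k_i=\iota(e_t)$ and $\iota(e_t)\le N-t$; otherwise let $\iota(e_{t+1})=\iota(e_t)-1$.
\end{itemize}
By induction, $\iota(e_t)-1\le N-t$, and $\iota(e_t)\ge k_i$ for the run $E_i$ containing $e_t$ unless $\iota(e_t)-1=N-t$.

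Consequently $\iota$ never falls below $1$. If $\iota(e_t)=1$ with $t<N$, then $k_i=1$, so $E_i$ is the last run, $e_{t+1}\in E_i$, and a flat step is taken. The first invariant then forces $\iota(e_N)=1$.

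The level sets $G_j=\iota^{-1}(j)$, for $j=k_1,\dots,1$, are therefore nonempty and satisfy $G_{k_1}<\dots<G_1$. Each $G_j$ is either a singleton or a segment of the run in window $j$, so $G_j\in\mathcal{F}_j$. Hence $E\in\mathbf{D}^*((\mathcal{F}_k)_k)$ with $m=k_1\le\min E$.
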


\begin{proof} (1): 
We may assume that $\theta[\mathbb{N}] \subseteq (0,1)$. Let $(J_n)$ be the interval partition of $\NN$ such that $J_n = I^1_n \cup \dots \cup I^n_n$, where $| I^k_n | = |I_n|$ for each $k$ and $I^1_n < I^2_n < \dots < I^n_n$. Fix order preserving bijections $g^k_n\colon I^k_n \to I_n$ and define $\theta^* \colon \mathbb{N}\to \mathbb{Q}$: 
\[ \text{For }\,j\in I^k_n\subseteq J_n\,\text{ let }\,\theta^*(j)=\theta\big(g^k_n(j)\big)-k.\] In other words, $\theta^*$ acts like $\theta$ on each $I^k_n\sim I_n$ but moves the images to the left of each other as $k$ increases. It follows that $\theta^*$ is decreasing on $J_n$.

We claim that $((J_n),\theta^*)$ emulates $\mbf{S}(\mathcal{F})$, that is, with $x_n=\sum_{j\in J_n}x_{\theta^*,j}\in X_{\theta^*}$, \[ \Big\|\sum_{n\in E}x_n\Big\|_{\theta^*}=|E|\,\text{ iff }\,E\in \mbf{S}(\mc{F}).\]

\smallskip
First, let $E= F_1 \cup \dots \cup F_m\in \mbf{S}(\mc{F})$ where $F_k\in\mc{F}\setminus\{\0\}$ and $\{m\}\leq F_1<F_2<\dots<F_m$. We need to find an increasing sequence $(j_n)_{n\in E}$ such that $j_n\in J_n$ and $(\theta^*(j_n))_{n\in E}$ is also increasing.
As $\mathcal{F}$ is emulated by $((I_n), \theta)$, for each $k$ there is an increasing sequence $(i_n)_{n \in F_k}$ such that $i_n\in I_n$ and $(\theta(i_n))_{n\in F_k}$ is also increasing. Take the copies of these $i_n$ from $I^{m-k+1}_n$ (we know that $k\leq m\leq n$), that is, let $j_n\in I^{m-k+1}_n\subseteq J_n$ such that $g^{m-k+1}_n(j_n)=i_n$. Then $(j_n)_{n\in F_k}$ is increasing, just like
\[ \theta^*(j_n)=\theta\big(g^{m-k+1}_n(j_n)\big)-(m-k+1)=\theta(i_n)-m+k-1\;(n\in F_k).\]
The concatenation $(j_n)_{n\in E}$ of the sequences $(j_n)_{n\in F_k}$, $k=1,\dots,m$ is increasing; we show that its image $(\theta^*(j_n))_{n\in E}$ is also increasing: If $n\in
F_k$ and $n'\in F_{k+1}$, then $j_n\in I^{m-k+1}_n\subseteq J_n$ and $j_{n'}\in I^{m-k}_{n'}\subseteq J_{n'}$, hence
\[ \theta^*(j_n)=\theta(i_n)-m+k-1<-m+k<\theta(i_{n'})-m+k=\theta^*(j_{n'}).\]

\smallskip
Now suppose that $\|\sum_{n\in E} x_n \|_\theta = |E|$ holds for some finite $E\subseteq\NN$ witnessed by $(j_n)_{n\in E}$, and let $m=\min(E)$. First of all, notice that if $n\in E\setminus\{m\}$ and $j_n\in I^k_n$, then $k\leq m$ because otherwise $\theta^*(j_n)=\theta(g^k_n(j_n))-k<-m$ and, at the same time, with some $k'\leq m$, $\theta^*(j_n)>\theta^*(j_m)=\theta(g^{k'}_m(j_m))-k'\geq -m$. For each $k\leq m$ define 
\[ F_k=\big\{n\in E:j_n\in I^{m-k+1}_n\big\}.\] Then $F_k\in\mc{F}$ because for $n\in F_k$ both $i_n=g^{m-k+1}_n(j_n)$  and $\theta(i_n)=\theta^*(j_n)+m-k+1$ are increasing. Some of these sets may be empty but if $n\in F_k$ and $n'\in F_{k'}$ for some $k<k'\leq m$, then 
\[ \theta^*(j_n)=\theta(i_n)-m+k-1<-m+k'-1<\theta(i_{n'})-m+k'-1=\theta^*(j_{n'})\]
and hence $F_k<F_{k'}$. It follows that $E\in \mbf{S}(\mc{F})$.

\smallskip
(2): We can
assume that $\theta_k[\mathbb{N}] \subseteq (0,1)$ for each $k$. Also, we can rearrange the intervals $I^k_n$ by shifting them along $\NN$ such that
\[ I^1_n<\dots<I^n_n<I^1_{n+1}<\dots\,\text{ and }\,J_n:=I^1_n\cup\dots\cup I^n_n\,\text{ is an interval}.\]
Define $\theta^*\colon \mathbb{N}\to \mathbb{Q}$ as follows: 
\[ \text{For }\,j\in I^k_n\subseteq J_n\,\text{ let }\,\theta^*(j)=\theta_k(j)-k.\] 
The obvious modification of the proof of (1) shows that $((J_n),\theta^*)$ emulates $\mbf{D}^*((\mathcal{F}_k)_k)$: We pick $j_n\in I^k_n\subseteq J_n$, instead of $j_n\in I^{m-k+1}_n$, and concatenate the sequences $(j_n)_{n\in F_k}$ in the reverse order, $(j_n)_{n\in F_m}, (j_n)_{n\in F_{m-1}},\dots,(j_n)_{n\in F_1}$. Similarly, in the second part of the proof, we work with $I^k_n$ instead of $I^{m-k+1}_n$.
\end{proof}

\begin{rem} When applying Theorem \ref{lem:Schreier} to obtain an emulation of $\mc{S}_1=\mc{S}$ from the canonical emulation of $\mc{S}_0=[\NN]^{\leq 1}$, $|J_n| = n$ for each $n\in\NN$ and $\theta^*[\mathbb{N}]$ will be the union of infinitely
many decreasing sequences. It follows that the closure of $\theta^*[\mathbb{N}]$ is of Cantor-Bendixson rank $2$. Similarly, if $((J_n),\theta^*)$ is the canonical emulation of $\mc{S}_2$ constructed in the theorem above, then the closure of $\theta^*[\mathbb{N}]$ is of Cantor-Bendixson rank $3$, and so on. 
\end{rem}

Since $\mc{S}_0=\mc{S}^*_0=[\NN]^{\leq 1}$ has an emulation as required above, we obtain the following by induction:

\begin{cor}\label{Schreier}
For every $n\in\NN$, $\mc{S}_n$ admits an emulation, and so the Sierpi\'nski space contains isometric copies of each Schreier space $X_{\mc{S}_n}$ of finite order. Furthermore, for every $\al<\om_1$, $\mc{S}^*_\al$ admits an emulation, and hence the Sierpi\'nski space contains isometric copies of each Schreier$^*$ space $X_{\mc{S}^*_\al}$. 
\end{cor}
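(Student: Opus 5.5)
The plan is to argue by transfinite induction on the order, using Theorem \ref{lem:Schreier}. Throughout, I will carry the stronger hypothesis that the emulation $((I_n),\theta)$ has consecutive intervals, $I_1<I_2<\dots$. The base case is $\mc{S}_0=\mc{S}^*_0=[\NN]^{\leq 1}$. It is emulated by $I_n=\{n\}$ with $\theta$ decreasing (Example \ref{c_0}), and these intervals are consecutive.

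For finite orders, note that $\mc{S}_{n+1}=\mbf{S}(\mc{S}_n)$. Part (1) of Theorem \ref{lem:Schreier} turns an emulation of $\mc{S}_n$ with consecutive intervals into one of $\mc{S}_{n+1}$. I will check that the output emulation $((J_n),\theta^*)$ is again consecutive: by construction $(J_n)$ is an interval partition of $\NN$ in increasing order. So the induction carries through all $n\in\NN$. The Fact following the definition of emulation then gives $(x_t)\stackrel{i}{\sim}(e_{\mc{S}_n,t})$. Composing with the embedding of $X_\theta$ into a genuine Sierpi\'nski space $X_f$, done by extending $\NN$ and $\theta$ to a bijection as described before the definition of emulation, yields the isometric copy.

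For $\mc{S}^*_\al$ I run the same induction along all $\al<\om_1$. Successor steps use part (1), exactly as above, since $\mc{S}^*_{\al+1}=\mbf{S}(\mc{S}^*_\al)$. At a limit $\ga$ we have $\mc{S}^*_\ga=\mbf{D}^*((\mc{S}^*_{\xi^\ga_k})_k)$ with $\xi^\ga_k<\ga$. By the induction hypothesis each $\mc{S}^*_{\xi^\ga_k}$ has an emulation with consecutive intervals, and part (2) of Theorem \ref{lem:Schreier} gives an emulation of $\mc{S}^*_\ga$. Again I will verify that the output intervals $J_n=I^1_n\cup\dots\cup I^n_n$ are consecutive, which holds by the rearrangement in that proof.

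The only delicate points are bookkeeping. First, the hypothesis ``$I_1<I_2<\dots$'' must be preserved at every stage; this is the main thing to check, but it is immediate from the constructions. Second, the induction uses countably many choices of emulations at limit stages; this is harmless, since we simply fix one for each $\xi^\ga_k$. Standardness of $\boldsymbol\xi$ plays no role here. The reason Schreier families of infinite order are not covered is that $\mbf{D}$, unlike $\mbf{D}^*$, has no preservation result, so the first claim is stated only for finite $n$.
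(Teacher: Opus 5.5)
Your proposal is correct and is the paper's argument: induction starting from the emulation of $\mc{S}_0=\mc{S}^*_0=[\NN]^{\leq 1}$, using part (1) of Theorem \ref{lem:Schreier} at successor steps and part (2) at limit steps, and your explicit check that the output intervals $J_1<J_2<\dots$ are again consecutive is bookkeeping the paper leaves implicit. (The isometric copy you obtain lives in the particular Sierpi\'nski space extending $X_\theta$; for an arbitrary Sierpi\'nski space, also cite Lemma \ref{copy}.)
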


It is natural to ask the following :

\begin{que}
Does the Schreier families $\mc{S}_\alpha$ admit an emulation for each $\alpha$? Or, at least, can $\mc{S}_\alpha$ be embedded into the Sierpi\'nski space isomorphically?
\end{que}

On the other hand, it seems that for most purposes, when $\mc{S}_\alpha$ is used in the literature, $\mc{S}^*_\alpha$ would serve equally good. Also, it is not clear how the (original) Schreier spaces depend on the ladder system used in their definition. If they do not (with respect to equivalence), then one can show that $\mc{S}^*_\alpha$ is equivalent to $\mc{S}_\alpha$ for each $\alpha$. If they do, then perhaps $\mc{S}^*_\alpha$ deserves the name 'Schreier family of rank $\alpha$' no less than $\mc{S}_\alpha$.

\section{Embedding $\ell_p$'s in the Sierpi\'nski space}\label{sier2}
 
Let $(X,\|\bullet\|)$ be a sequence Banach lattice, that is, $c_{00}\subseteq X\subseteq\mbb{R}^\NN$, the unit vectors $(e_n)$ are normalized in $\|\bullet\|$ and for any $x\in X$ and $y\in\R^\N$, if $|y(n)|\leq |x(n)|$ for each $n\in\NN$, then $y\in X$ with $\|y\|\leq \|x\|$. In follows that $(e_n)$ is a 1-unconditional basic sequence in $X$, in particular  for any $x=\sum_na_nx_n\in X$ and $A\subseteq\NN$ also $x_A=\sum_{n\in A}a_nx_n\in X$ with $\|x_A\|_X\leq \|x\|_X$. Let $S(X)$ be the unit sphere in $X$ and $\widetilde B(X)=B(X)\cap c_{00}$.

Consider a partial order in $\tilde B(X)$ defined as follows: $x\dotleq y$, for $x,y\in \tilde B(X)$, if for any $z\in \tilde B(X)$ with $\min \supp z>\max\{\max \supp x, \max \supp y\}$, and $x+z\in \tilde B(X)$ also $y+z\in \tilde B(X)$. 

Consider the following properties of the basic sequence $(e_n)\subseteq X$.
\begin{enumerate}\setlength\itemsep{0.1cm}
	\item[($\alpha$)] for any $x,y\in \tilde B(X)$ with a finite support, either $x\dotleq y$ or $y\dotleq x$,  
	\item[($\beta$)] for any $x,y\in \tilde B(X)$,  $n\in\N$ with $\max\{\max\supp x,\max\supp y\}<n$, and scalars $a,b\in [0,1]$: if $y\dotleq x+ay_n\in \tilde B(X)$ with $x+(a+b)y_n\in \tilde B(X)$ then $\tilde B(X)\ni y+by_n\dotleq x+(a+b)y_n$. 
\end{enumerate}


\begin{prop}    \label{modular}
Let $X$ be a sequence Banach lattice admitting a function $I: X\to [0,\infty)$ with the following properties:
\begin{enumerate}\setlength\itemsep{0.1cm}
    \item $I(x)\leq 1\iff \|x\|\leq 1$ for any $x\in X$, and $I(0)=0$,
    \item $I$ is additive on disjointly supported vectors, 
	\item $I_n: [0,\infty)\ni t\mapsto I(te_n)\in [0,\infty)$ is continuous and superadditive (for example, convex) for any $n\in\N$.
 \end{enumerate}
 Then $x\dotleq y\iff I(x)\leq I(y)$ for any $x,y\in B(X)$. Moreover, the unit vector sequence $(e_n)\subseteq X$ satisfies $(\alpha)$ and $(\beta)$.  
\end{prop}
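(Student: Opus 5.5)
The plan is to reduce the order $\dotleq$ to a comparison of the values of $I$, using only the three listed properties. We then read off $(\alpha)$ and $(\beta)$ from that characterization. Throughout, $x,y\in\tilde B(X)$, the order $\dotleq$ being defined on $\tilde B(X)$. I read the $y_n$ in $(\beta)$ as the unit vector $e_n$.

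\emph{Preliminary observation: $I_n(1)=1$ for every $n$.} Since $\|e_n\|=1$, property (1) gives $I_n(1)\leq 1$. For $t>1$ we have $\|te_n\|=t>1$, so $I_n(t)>1$. Continuity of $I_n$ then forces $I_n(1)\geq 1$, hence $I_n(1)=1$. As $I_n(0)=I(0)=0$, the intermediate value theorem gives that $I_n$ attains every value in $[0,1]$ on $[0,1]$.

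\emph{The forward implication $I(x)\leq I(y)\Rightarrow x\dotleq y$.} Let $z\in\tilde B(X)$ with $\min\supp z$ above the supports of $x$ and $y$, and assume $x+z\in\tilde B(X)$. By disjointness and (2), $I(x)+I(z)=I(x+z)\leq 1$. Hence $I(y+z)=I(y)+I(z)\leq I(x)+I(z)\leq 1$. By (1), $y+z\in B(X)$, and it is finitely supported, so $y+z\in\tilde B(X)$.

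\emph{The converse, by contraposition.} Suppose $I(x)>I(y)$. Pick $n$ larger than all indices in $\supp x\cup\supp y$. Since $I(y)\leq 1$, the preliminary observation yields $t\in[0,1]$ with $I_n(t)=1-I(y)$; set $z=te_n\in\tilde B(X)$. Then $I(y+z)=I(y)+I_n(t)=1$, so $y+z\in\tilde B(X)$. On the other hand, $I(x+z)=I(x)+1-I(y)>1$, so $x+z\notin\tilde B(X)$. This $z$ witnesses that $y\not\dotleq x$. Thus $y\dotleq x$ implies $I(y)\leq I(x)$, which is the converse with the roles of $x$ and $y$ swapped. This step, namely producing a test vector $z$ that saturates the unit ball exactly for $y$, is the only non-formal point. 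It is where continuity of $I_n$ and the normalization $I_n(1)=1$ are used.

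\emph{The properties $(\alpha)$ and $(\beta)$.} Property $(\alpha)$ is now immediate, because $\dotleq$ is pulled back from the total order of $[0,\infty)$ via $I$. For $(\beta)$, the hypothesis $y\dotleq x+ae_n$ gives
\[ I(y)\leq I(x+ae_n)=I(x)+I_n(a). \]
Using additivity on disjoint supports and then superadditivity of $I_n$, we get
\[ I(y+be_n)=I(y)+I_n(b)\leq I(x)+I_n(a)+I_n(b)\leq I(x)+I_n(a+b)=I(x+(a+b)e_n). \]
The right-hand side is at most $1$ because $x+(a+b)e_n\in\tilde B(X)$. Therefore $y+be_n\in\tilde B(X)$ by (1), and the characterization gives $y+be_n\dotleq x+(a+b)e_n$.
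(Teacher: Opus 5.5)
Your forward implication contains a reversed inequality. You assume $I(x)\le I(y)$ and $x+z\in\tilde B(X)$, and then write $I(y)+I(z)\le I(x)+I(z)$. That step needs $I(y)\le I(x)$, the opposite of your hypothesis. The step also cannot be repaired under your reading of $\dotleq$ (whenever $x+z\in\tilde B(X)$, also $y+z\in\tilde B(X)$), because the implication $I(x)\le I(y)\Rightarrow x\dotleq y$ is then false. For example, in $\ell_1$ with $I=\|\bullet\|_1$, take $x=\tfrac12 e_1$, $y=e_1$, $z=\tfrac12 e_2$. The converse has the mirror-image problem. Your test vector $z=te_n$ does show that $y\dotleq x$ fails in your reading. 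But the contrapositive of ``$I(x)>I(y)\Rightarrow$ not $y\dotleq x$'' is ``$y\dotleq x\Rightarrow I(x)\le I(y)$'', not $I(y)\le I(x)$ as you wrote. Carried out consistently, your reading yields $x\dotleq y\iff I(y)\le I(x)$. With that characterization your derivation of $(\beta)$ collapses, since the order comparison would require subadditivity of $I_n$ rather than superadditivity.

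The fix is to orient the relation as the paper's proof does; the printed definition has $x$ and $y$ interchanged relative to how it is used. Read $x\dotleq y$ as: every admissible $z$ with $y+z\in\tilde B(X)$ also satisfies $x+z\in\tilde B(X)$. The paper's first step assumes $y+z\in B(X)$ and concludes $x+z\in B(X)$. This is also the reading the later construction needs, where $a$ is taken maximal with $g_m+ay_n^*\dotleq g_{F_t}$. With this orientation your forward step becomes $I(x+z)=I(x)+I(z)\le I(y)+I(z)=I(y+z)\le 1$. Your vector $z=te_n$, which has $y+z\in\tilde B(X)$ but $x+z\notin\tilde B(X)$, then directly refutes $x\dotleq y$ when $I(x)>I(y)$, with no swapping of roles. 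After this correction your proof is the paper's, including the treatment of $(\alpha)$ and $(\beta)$. Your verification that $I_n(1)=1$ and the intermediate value argument producing $t$ (with $t>0$, since $I(y)<I(x)\le 1$) usefully spell out what the paper compresses into ``by (1) and (3)''.
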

\begin{proof}    
	Fix $x,y\in \tilde B(X)$. 

	Assume first that $I(x)\leq I(y)$ and take $z\in X$ with \[ \min\supp z>\max\{\max\supp x,\max\supp y\} \] and $y+z\in B(X)$. Then by (1) and (2) we have 
\[1\geq I(y+z)=I(y)+I(z)\geq I(x)+I(z)=I(x+z)\] 
hence $x+z\in B(X)$ by (1). For the reverse implication, towards contradiction, assume $x\dotleq y$ and $I(x)>I(y)$. For $n>\max\{\max\supp x, \max\supp y\}$, by (1) and (3), take $s>0$ with $I(se_n)=1-I(y)$. By (2) and (1), $y+se_n\in \tilde B(X)$,
whereas $x+se_n\not\in\tilde B(X)$, which contradicts $x\dotleq y$. 

The property $(\alpha)$ follows immediately. For $(\beta)$ take $x,y,n,a,b$ as required and proceed, using (2), (3) and the above characterization of $\dotleq$ in terms of $I$,  as follows:
\begin{align*}    
I(x+(a+b)e_n)&=I(x)+I((a+b)e_n)\geq I(x)+I(ae_n)+I(be_n)\\
&=I(x+ae_n)+I(be_n)\geq I(y)+I(be_n)=I(y+be_n)
\end{align*}
\end{proof}    

We now recall the class of Banach spaces that admit a modular with the properties listed in Proposition  \ref{modular}, based on \cite{maligranda,musielak}. 

Let $\Phi=(\phi_i)$ be a sequence of Orlicz functions, i.e. each $\phi_i:[0,\infty)\to [0,\infty]$ is convex, left-continuous, not vanishing on $\phi_i^{-1}[0,\infty)$, with $\phi_i(0)=0$ and $\lim\limits_{t\to\infty}\phi_i(t)=\infty$. We can also
assume that $\phi_i(1)=1$ for all $i\in\N$. A Musielak-Orlicz space (or a generalized Orlicz space) induced by $\Phi$ is the space
\[\ell_\Phi:=\Big\{x=(a_i)\in \R^\N: I_\Phi\big(\tfrac{x}{\rho}\big)<\infty \text{ for some }\rho>0\Big\},\]
where \[I_\Phi(x):=\sum_{i=1}^\infty \phi_i(|a_i|), \ x=(a_i)\in \R^\N, \]
with the norm $\|x\|_\Phi=\inf\{\rho>0: I_\phi(\tfrac{x}{\rho})<\infty\}$. 
It follows that the conjugate function $\psi_i$ of $\phi_i$, $i\in\N$, in the sense of Young, i.e. given by $\psi_i(s)=\sup\{st-\phi_i(t): t\geq 0\}$, $s\geq 0$, 
is also an Orlicz function. 

We consider also the following condition, called the $\Delta_2$ condition: for some $K>1$ and $(h_i)\in \ell_1\cap (0,\infty)^\N$
\[\phi_i(2t)\leq K\phi_i(t)+h_i. \ \ \mbox{ for }i\in\N\]

\begin{thm}\label{Orlicz spaces} With the above notation, $\ell_\Phi$ is reflexive iff
	both $\Phi=(\phi_i)$ and $\Psi=(\psi_i)_i$ satisfy $\Delta_2$. Also, every reflexive $\ell_\Phi$ satisfies the following properties:
\begin{enumerate}\setlength\itemsep{0.1cm}
    \item the unit vectors $(e_i)$ form a basis both in $\ell_\Phi$ and $\ell_\Psi$,
	\item $\ell_\Phi^*$ is isometrically isomorphic to $\ell_\Psi$, with $\{e_i\colon i\in \mathbb{N}\} \subseteq \ell_\Psi$ isometrically equivalent to the sequence of bi-orthogonal functionals to $\{e_i\colon i\in \mathbb{N}\} \subseteq \ell_\Phi$,
    \item $I_\Psi(x)\leq 1\iff \|x\|\leq 1$ for every $x\in\ell_\Psi$. 
\end{enumerate}
\end{thm}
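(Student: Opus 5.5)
This theorem collects classical facts about Musielak--Orlicz spaces, and the plan is to assemble it from the standard theory in \cite{musielak,maligranda} rather than prove everything from scratch. The reflexivity criterion is done in the usual three steps.
\begin{itemize}
\item First, the $\Delta_2$ condition for $\Phi$ (with summable $(h_i)$) is equivalent to $\ell_\Phi$ coinciding with its order-continuous part $h_\Phi=\{x:I_\Phi(x/\rho)<\infty\text{ for all }\rho>0\}$. This is equivalent to $\ell_\Phi$ not containing a lattice copy of $\ell_\infty$, hence of $c_0$.
\item Second, the K\"othe dual of $\ell_\Phi$ is $\ell_\Psi$ (with the Orlicz norm, which is equivalent to the Luxemburg norm). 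By order continuity, $h_\Phi^*=\ell_\Psi$ whenever $\Phi\in\Delta_2$.
\item Third, a Banach lattice with order continuous norm whose dual also has order continuous norm is reflexive. Conversely, reflexivity forces both $\ell_\Phi$ and $\ell_\Phi^*=\ell_\Psi$ to contain no $c_0$, which gives $\Delta_2$ for both $\Phi$ and $\Psi$.
\end{itemize}
For the converse step, failure of $\Delta_2$ is turned into an explicit lattice copy of $\ell_\infty$. One chooses disjoint blocks on which $I_\Phi$ of a normalized vector is about $1$, but $I_\Phi$ of twice it is huge. This is where the summability of $(h_i)$ is used, and it is the most delicate combinatorial part.

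Given reflexivity, property (1) follows from order continuity. In $h_\Phi$ we have $\|P_{[n,\infty)}x\|_\Phi\to 0$, since $I_\Phi(P_{[n,\infty)}x/\rho)\to 0$ for every $\rho$ by dominated convergence of the tail sums. Hence $(e_i)$ is a basis, and it is $1$-unconditional because the norm is a lattice norm. The same applies to $\ell_\Psi$, since $\Psi\in\Delta_2$.

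For (2), I would identify $\ell_\Phi^*$ with $\ell_\Psi$ via $y\mapsto(x\mapsto\sum x_iy_i)$, using Young's inequality $st\le\phi_i(t)+\psi_i(s)$ for boundedness. Surjectivity comes from order continuity: a functional is determined by its values on $(e_i)$, and the resulting sequence lies in the K\"othe dual. The dual norm of $\ell_\Phi$ (Luxemburg) is the Orlicz (Amemiya) norm on $\ell_\Psi$, so to get an isometry with the Luxemburg norm on $\ell_\Psi$ one has to be careful about which norm is meant. I expect this bookkeeping of norms to be the main obstacle. The plan is to state (2) with the Orlicz norm on the $\Psi$ side, or to note that the paper only uses the biorthogonal identification together with (3). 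Under this identification $(e_i)\subseteq\ell_\Psi$ corresponds exactly to the biorthogonal functionals $(e_i^*)$.

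Property (3) uses $\Delta_2$ for $\Psi$. First, $\|x\|\le1$ implies $I_\Psi(x/\rho)\le1$ for all $\rho>1$, so by monotone convergence and left-continuity $I_\Psi(x)\le1$. Conversely, $I_\Psi(x)\le1$ gives $\|x\|\le1$ directly from the infimum definition.

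Finally, $\Delta_2$ ensures that $\rho\mapsto I_\Psi(x/\rho)$ is finite and continuous. Property (3) then holds for the Luxemburg norm. For the Orlicz norm, it holds after the standard renormalization of the space, which is the one needed to apply Proposition~\ref{modular} to $\ell_\Psi$.
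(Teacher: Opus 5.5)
\textbf{Overall.} The paper gives no proof of this theorem; it is quoted from \cite{maligranda,musielak}. Your plan of assembling it from the standard theory is the intended route: $\ell_\Phi=h_\Phi$ under $\Delta_2$, K\"othe duality $h_\Phi^*=\ell_\Psi$, the Banach-lattice reflexivity criterion, and order continuity for (1). Those parts are fine.

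\textbf{The $\Delta_2$ condition.} Your equivalence ``$\Delta_2\iff\ell_\Phi=h_\Phi$'', and with it the ``only if'' direction, holds only for the usual $\delta_2$-condition. There $\phi_i(2t)\le K\phi_i(t)+h_i$ is required only for small $t$, e.g.\ $\phi_i(t)\le 1$. The statement leaves the range of $t$ unspecified, and if it is read for all $t\ge 0$ the equivalence fails. Take $\phi_i(t)=t^2$ on $[0,1]$ and $\phi_i(t)=e^{2(t-1)}$ on $[1,\infty)$. Then $\ell_\Phi=\ell_2$ isometrically, yet no global $\Delta_2$ holds. You should state the reading explicitly.

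\textbf{Your diagnosis of (2) and (3) is right.} This is a defect of the statement, not of your argument. With the Luxemburg norm on $\ell_\Phi$ and $\psi_i$ the Young conjugate, (2) holds for the Orlicz norm on $\ell_\Psi$ and (3) holds for the Luxemburg norm. In general neither norm satisfies both. For $\phi_i(t)=t^2$ one gets $\psi_i(s)=s^2/4$.
\begin{itemize}
\item The Luxemburg norm on $\ell_\Psi$ is $\|y\|_2/2$, so $\|e_i\|_\Psi=1/2\neq 1=\|e_i^*\|$ and (2) fails.
\item The Orlicz norm is $\|y\|_2$, while $I_\Psi(y)\le 1$ iff $\|y\|_2\le 2$, so (3) fails.
\end{itemize}

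\textbf{Where your resolution goes wrong.} Your closing claim that (3) holds for the Orlicz norm ``after the standard renormalization'' is false. Your fallback, that the paper only needs the biorthogonal identification together with (3), also fails. Item (4) of the Remark after the theorem applies Proposition~\ref{modular} to the lattice carrying the \emph{dual} norm of $\ell_\Phi$. Properties $(\alpha)$ and $(\beta)$ are defined through the exact unit ball and are not isomorphic invariants. So (3) is needed for precisely that dual norm, which is the combination that fails.

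\textbf{The correct repair.} Move the Orlicz norm to the $\Phi$ side: equip $\ell_\Phi$ with $\|x\|^O_\Phi=\inf_{k>0}k^{-1}(1+I_\Phi(kx))$.
\begin{itemize}
\item Under $\Delta_2$ its dual is isometrically $\ell_\Psi$ with the Luxemburg norm. Then (2) and (3) hold simultaneously. Your monotone-convergence argument for (3) does not even need $\Delta_2$.
\item Normalizing the basis only replaces each $\psi_i$ by $\psi_i(c_i\,\cdot)$, which keeps the hypotheses of Proposition~\ref{modular}.
\item Since $\|\cdot\|^L_\Phi\le\|\cdot\|^O_\Phi\le 2\|\cdot\|^L_\Phi$, the isomorphic conclusion of Theorem~\ref{Banach spaces in Sierpinski} is unaffected.
\end{itemize}
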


\begin{rem} \hfill 
\begin{enumerate}\setlength\itemsep{0.1cm}
    \item Any Banach space $X$ with a normalized 1-unconditional basis $(x_n)$ is isometrically isomorphic with some Banach lattice $\tilde X$ with the unit vector basis $(e_n)$. Indeed, let $\tilde X=\{(a_n)\in\R^\NN: \sum_n a_nx_n\in X\}$ endowed with the norm $\|(a_n)\|=\|\sum_na_nx_n\|_X$ for any $(a_n)\in \tilde X$. Then trivially the map $X\ni \sum_n a_nx_n\mapsto (a_n)\in\tilde X$ is an isometric isomorphism. Therefore we can speak about properties $(\alpha)$ and $(\beta)$ of the basis $(x_n)$. 
    \item Let $X$ be a Banach space with a normalized 1-unconditional basis $(x_n)$. Then the sequence $(x_n^*)$ of functionals bi-orthogonal  to $(x_n)$ is also a normalized 1-unconditional basic sequence, forming a weak* basis of $X^*$, that is any $f\in X^*$ is of the form $f=w^*\lim_n f(x_n)x^*_n$. Let $Z=\{(f(e_n)): f\in X^*\}$ endowed with the norm $\|(f(e_n))\|=\|f\|_{X^*}$, $f\in X^*$. Then $Z$ is a Banach lattice isometrically isomorphic to $X^*$, with the unit vector basis in $Z$ isometrically equivalent to $(x_n^*)$ in $X^*$. In particular, we can speak about properties $(\alpha)$ and $(\beta)$ of $(x_n^*)$. 
    \item Any $\ell_p$, $1\leq p<\infty$ is a Musielak-Orlicz space, with Orlicz functions $\phi_i(t)=t^p$, $t\in [0,\infty)$, $i\in\NN$. 
    \item  Let $X$ be a reflexive Musielak-Orlicz space, considered with the canonical unit vector basis. Then, by Theorem \ref{Orlicz spaces} and Proposition \ref{modular}, the sequence of bi-orthogonal functionals to the basis 
    satisfies $(\alpha)$ and $(\beta)$.        
\end{enumerate}    
\end{rem}

\begin{thm} \label{Banach spaces in Sierpinski}
    Let $Y$ be a Banach space with a  normalized 1-unconditional basis $(y_n)$ so that the sequence of bi-orthogonal functionals $(y_n^*)\subseteq Y$ satisfies $(\alpha)$ and $(\beta)$. Then the Sierpi\'nski space contains an isomorphic copy of $Y$. 

 In particular, the Sierpi\'nski space contains an isomorphic copy of  any reflexive  Musielak-Orlicz sequence space. It follows that the Sierpi\'nski space contains an isomorphic copy of any $\ell_p$, $1\leq p<\infty$ and $c_0$. 
\end{thm}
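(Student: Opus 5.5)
The plan is to realize $Y$ as the closed span of a block sequence $(x_n)$ in a ``partial'' Sierpi\'nski space $X_\theta$, as in Section \ref{emulations}. Since every $X_f$ contains a complemented isometric copy of each $X_\theta$, this suffices. The starting point is that $(y_n)$ is $1$-unconditional. Hence for finitely supported $a$,
\[ \Big\|\sum_n a_ny_n\Big\|=\sup\Big\{\sum_n |a_n|b_n: b=\sum_n b_ny^*_n\in\widetilde B(Y^*),\ b\geq 0\Big\}. \]
On the other side, $\|x\|_\theta$ is the supremum of $\sum_{j\in C}|x(j)|$ over chains $C\in\mc{C}_\theta$. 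So I want chains through $\bigcup_n\supp(x_n)$ to correspond, up to a multiplicative constant, to nonnegative finitely supported functionals in $B(Y^*)$.

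\textbf{Construction of the blocks.} Each $x_n=\sum_{j\in I_n}c_je_{\theta,j}$ is supported on an interval $I_n$ on which $\theta$ is decreasing. Then every chain picks at most one point of $I_n$, and picking $j$ plays the role of choosing the coordinate $b_n=c_j$. I fix finite grids $0<c^{(n)}_1<\dots<c^{(n)}_{k_n}\leq 1$ that are fine enough, for instance multiples of $\eps 2^{-n}$. I then index the points of $I_n$ by pairs $(c,\lambda)$, where $c$ is a grid value and $\lambda$ is a discretized ``level''. Property $(\alpha)$ says $\dotleq$ is a total preorder on $\widetilde B(Y^*)$, so it admits a rank function $r$ (as in Proposition \ref{modular}, where $r=I$). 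The rational $\theta(j)$ for $j\sim(c,\lambda)$ is chosen to encode the $\dotleq$-rank of a prefix $x+cy^*_n$ whose prefix $x$ has rank $\lambda$. The intended invariant is that a chain visiting $j_1\in I_{n_1},\dots,j_m\in I_{n_m}$ has increasing $\theta$-values exactly when the successive partial sums $\sum_{i\leq l}c_{j_i}y^*_{n_i}$ stay in $\widetilde B(Y^*)$. This is what makes the condition local: whether one may append $c\,y^*_n$ to a prefix depends, by the definition of $\dotleq$, only on the $\dotleq$-class of that prefix. Consequently a one-dimensional ``height'' $\theta$ can encode admissibility.

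\textbf{Main obstacle: consistency of the encoding.} The difficult step is showing that $\theta$ can be chosen so that increasing chains are precisely the admissible histories, even though a point in $I_n$ does not remember the whole history. Property $(\beta)$ is exactly the monotonicity needed. If $y\dotleq x+ay^*_n$, then extending the $\dotleq$-smaller prefix $y$ by $by^*_n$ stays below extending $x+ay^*_n$ by the remaining $by^*_n$. So a prefix that is ``lower'' remains lower after comparable extensions. This lets me order the points of $I_n$ lexicographically, first by the level they reach and then decreasingly inside the block, and prove by induction on the length of the chain that the $\theta$-order mirrors the $\dotleq$-order of the prefixes. I would first attempt the clean case $Y^*=\ell_q$ with $I(b)=\sum_n b_n^q$, where the levels are literally partial sums. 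Once that case works, I would abstract it using only $(\alpha)$ and $(\beta)$.

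\textbf{Discretization and conclusion.} Because the grids are finite, the levels are only approximate. I would absorb the error by rounding coefficients down and letting the grid mesh decay like $\eps2^{-n}$. The two resulting inequalities are as follows. Every chain yields a functional in $(1+\eps)B(Y^*)$, which gives $\|\sum a_nx_n\|_\theta\leq(1+\eps)\|\sum a_ny_n\|$. Conversely, every nonnegative $b\in\widetilde B(Y^*)$ is dominated coordinatewise, up to $\eps$, by some chain, which gives the reverse inequality up to $1-\eps$; here $1$-unconditionality handles signs. The result is an isomorphic, though not isometric, copy of $Y$. The ``in particular'' parts then follow from the remark above: the conjugates of reflexive Musielak--Orlicz spaces satisfy $(\alpha)$ and $(\beta)$ by Theorem \ref{Orlicz spaces} and Proposition \ref{modular}. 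This covers $\ell_p$ for $1<p<\infty$. The cases $\ell_1$ and $c_0$ come from infinite chains and anti-chains, as noted at the start of Section \ref{emulations}.
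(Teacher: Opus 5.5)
There is a genuine gap, and it is in the step you flag as the main obstacle. Within your design it cannot be closed. Because $\theta$ is decreasing on each $I_n$, a chain meets $\supp(x_n)$ in at most one point $j$. The only constraint on using $j$ is the single comparison of $\theta(j)$ with the $\theta$-value of the previous point, and the chain then collects the full weight $c_j$. A point labelled $(c,\lambda)$ would need two thresholds: an entry threshold (the prefix has rank at most $\lambda$) and an exit threshold (the rank of $x+cy^*_n$). One rational cannot carry both.

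Concretely, take your test case $Y^*=\ell_q$, $I(b)=\sum_n b_n^q$, with $\theta(c,\lambda)$ increasing in $\lambda+c^q$, and with each $I_n$ containing a point for every admissible pair. Pick blocks $n_1<\dots<n_k$, and in block $n_i$ the point with $c^q=1/2$ and level $\lambda_i=1/2-(k-i)\delta$. The values $\lambda_i+1/2$ increase, so these $k$ points form a chain. Yet the associated functional $c\sum_i y^*_{n_i}$ has $I=k/2$. Hence $\|\sum_i x_{n_i}\|_\theta\geq 2^{-1/q}k$, while $\|\sum_i y_{n_i}\|=k^{1/p}$. Since your levels must become arbitrarily fine, $k$ is unbounded, so the upper estimate fails by far more than $\eps$.

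Property $(\beta)$ does not rescue this. It controls $y+by^*_n$ only when the lower prefix $y$ receives the \emph{remaining} amount $b$, whereas your chain receives the full $c$. Note also that $x\dotleq y\Rightarrow x+w\dotleq y+w$ (for $w$ supported further right) already follows from the definition of $\dotleq$; $(\beta)$ is needed precisely for partial increments.

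The paper's missing idea is to encode the coefficient $b_n$ by the \emph{number} of points a chain picks, not by the weight of one point. Every coefficient of $x_n$ equals $1/N$ with $N=4^nM_n$. The support $\supp(x_n)$ is a union of intervals $J^m$ on which $f$ is \emph{increasing}, so a chain can collect a whole run.

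Let $K$ be the set of finitely supported nonnegative functionals in $B(Y^*)$. Inductively, each value $t$ carries a canonical chain $F_t$ with $\max f[F_t]=t$, $g_{F_t}\in K$, and $t\mapsto g_{F_t}$ $\dotleq$-increasing. The run $J^m$ prolongs $F_{t_m}$. Its pieces $I^m_l$ have length $(a^m_l-a^m_{l-1})N$, where $a^m_l$ is maximal with $g_{F_{t_m}}+a^m_l y^*_n\dotleq g_{F_t}$ for $t=t^{m+1}_l$. These pieces are placed in the gaps between the existing values.

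Consequently, a chain arriving from height $s$ can only use the points $J'\subseteq J^m$ lying above $s$, which is exactly the remaining amount. Applying $(\beta)$ with $a=a^m_l+1/N$ and $b=(|J'|-1)/N$ gives $g_{F_s}+\frac{|J'|-1}{N}y^*_n\dotleq g_{F_t}$. This yields $\|g-g_F\|<1/2$ in both directions, hence an isomorphism with fixed constants rather than $1\pm\eps$.

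Your reduction to nonnegative functionals, the use of $1$-unconditionality, and the derivation of the particular cases ($\ell_p$ via Musielak--Orlicz, and $\ell_1$, $c_0$ via infinite chains and anti-chains) are fine.
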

\begin{proof}    	Let $K$ be the set of all functionals in $B(Y^*)$ with non-negative coefficients and finite support, note that $K$ contains $(y^*_n)$ and is invariant under restrictions to subsets of $\N$, i.e.
\begin{equation}\label{K restrictions}
	\text{ for every }g=\sum_{n=1}^N a_ny^*_n\in K \text{ and } E\subseteq \{1,\dots,N\} \text{ we have }\sum_{n\in E}a_ny^*_n\in K.
\end{equation}

We write $(e_j)$ for the unit vector basis of $c_{00}$.

We shall choose inductively an injection $f:\N\to\Q\cap (0,\infty)$ and a normalized block sequence $(x_n)$ in  $X_f$, each $x_n$ of the following form
\begin{equation}\label{xx_n}
	x_n=\tfrac{1}{4^nM_n}\sum_{m=M_n}^0\sum_{j\in J_n^{m}}e_j,
\end{equation}
where $(J_n^{m})_{m=M_n}^0$ are successive finite intervals in $\N$. 

With every finite $F\in \mathcal{C}_f$ (i.e. an element of the Sierpi\'nski family generated by $f$) we assign a functional  $g_F\in Y^*$ of the form
\[g_F=\sum_n\Big(\sum_{j\in F}e^*_j(x_n)\Big)y_n^*. \]
In particular, for any $F\in\mathcal{C}_f$ and scalars $(b_n)$, 
\begin{equation}\label{X_phi to Y}
	g_F\Big(\sum_nb_ny_n\Big)=\Big(\sum_{j\in F}e^*_j\Big)\Big(\sum_nb_nx_n\Big).
\end{equation}
We aim at the following properties:
\begin{equation}\label{K to F_phi}
	g\in K\Longrightarrow \|g-g_F\|<\tfrac{1}{2} \text{ for some }F\in\mathcal{C}_f, 
\end{equation}
\begin{equation}\label{F_phi to K}
	F\in \mathcal{C}_f \Longrightarrow \|g-g_F\|<\tfrac{1}{2} \text{ for some }g\in K.
\end{equation}
By \eqref{X_phi to Y} and 1-unconditionality of the basis $(e_j)$ of $X_f$, \eqref{K to F_phi} yields the equivalence of $(x_n)$ in $X_f$ and $(y_n)$ in $Y$, in particular $(x_n)$ is seminormalized. 

In order to achieve \eqref{K to F_phi}, we proceed inductively, at each $n$-th step building a vector $x_n$ as in \eqref{xx_n} with $\min \supp x_n>\max\supp x_{n-1}$,  extending $f$ to an injection $D_n:=\supp x_1\cup\dots\cup \supp x_n\to\Q\cap
[0,\infty)$, and defining auxiliary sets $(F_t)_{t\in f[\supp x_n]}$ of integers satisfying the following properties (with $K_n=K\cap \spa\{y_1^*,\dots,y^*_n\}$ and $\varepsilon_n=\tfrac{1}{2}-\tfrac{1}{4^n}$): 
\begin{enumerate}\setlength\itemsep{0.1cm}   
	\item[a)] For every $t\in f[D_n]$ we have $F_t\in\mathcal{C}_f$ with $\max f[F_t]=t$ and $g_{F_t}\in K$. 
\item[b)] For every $t,s\in f[D_n]$ with $s\leq t$ we have $g_{F_s}\dotleq g_{F_t}$.
\item[c)] For every $F\in\mathcal{C}_{f}(\subseteq 2^{D_n})$, there is $G\in\mathcal{C}_f$ with
	$\|g_F-g_G\|<\varepsilon_n$, $\max f[G]\leq \max f[F]$, $g_G\in K$,  $g_G\dotleq g_{F_t}$ for $t:=\max f[F]$.
\item[d)] Every $g\in K_n$  satisfies $\|g-g_{F_t}\|<\varepsilon_n$ for some  $t\in f[D_n]$.
	
\end{enumerate}
Note that (c), resp. (d), implies immediately \eqref{F_phi to K}, resp. \eqref{K to F_phi}, by 1-unconditionality of $(y_n^*)$. 

We describe now the inductive construction. 
Let $M_1=0$ and pick an interval $J_1^{0}=\{1,2,3,4\}\subseteq\N$. Put $f :J_1^{0}\ni j\to j\in\Q$ and define $x_1$ according to \eqref{xx_n}. Let $F_t=J^{0}_1\cap [1,t]$ for any $t\in f[D_1]=\{1,2,3,4\}$. Then all the desired conditions are satisfied. 

Now, fix $n>1$ and assume that we have built $x_1,\dots, x_{n-1}$, and $f: D_{n-1}=\supp x_1\cup\dots\cup\supp x_{n-1}\to \Q$   satisfying conditions (1)-(4).

Enumerate $f[D_{n-1}]$ as $t_1<\dots<t_M$. Let $M_n=M$ and for each $m=1,\dots,M$ 
pick $g_m:=g_{F_{t_m}}\in K_{n-1}$  by (1) of the inductive assumption. Put also $g_0=0$.

In the sequel we will drop the index $n$ for the sake of simplicity. At the $n$-th step of the construction, we perform further inductive construction described below. We shall choose inductively on $m=M,\dots,0$ (starting from $m=M$) the following objects:

\begin{itemize}\setlength\itemsep{0.1cm}
    \item scalars $(a_l^{m})_{l=0}^{L^{m+1}}\subseteq A:=\{\tfrac{k}{N}: k=0,1,\dots,N\}$, with $N=4^nM$,  
    \item successive intervals $(I_l^{m})_{l=1}^{L^{m+1}}$ whose union $J^{m}:=\bigcup_{l=1}^{L^{m+1}}I_l^{m}$ is also an interval with $\min J^{m}>\max J^{m+1}$, with $\min J^{M}>\max \supp x_{n-1}$, 
    \item an extension of $f$ to an injection: $D_{n-1}\cup J^{M}\cup\dots\cup J^{m}\to\Q\cap [0,\infty)$,
    \item ($L^{m}$ and) an increasing sequence  $(t_l^{m})_{l=0}^{L^{m}}\subseteq [t_m,\infty] \cap \mathbb{Q}$ with $t_0^{m}=t_m$, $t_{L^{m}}^{m}=\infty$, 
    \item sets  of integers $F_t$, $t\in \{t_l^{m}: l=1,\dots, L^{m}\}$,    
\end{itemize} 
as follows (conditions listed according to the choice order).
\begin{enumerate}[label=(\roman*)]\setlength\itemsep{0.1cm}
    \item $L^{M+1}=1$ (needed to start the induction for $m=M$ below in (ii)),
    \item $a_0^{m}=0$,  $a_l^{m}$ is maximal in $A$ with $g_m+a_l^{m}y^*_n\dotleq g_{F_t}$, where $t=t_l^{m+1}$ for each $l=1,\dots, L^{m+1}-1$, and $a_{L^{m+1}}^{m}$ is maximal in $A$ with $g_m+a_{L^{m+1}}^{m}y^*_n\in K$, 
    \item $|I_l^{m}|=(a_l^{m}-a_{l-1}^{m})N\in\{0,1,\dots,N\}$ for each $l=1,\dots, L^{m+1}$,  (thus $|J^{m}|=Na_{L^{m}}^{m}$),
    \item $f$ is increasing on $J^{m}$,
    \item $f[I_l^{m}]\subseteq (t_{l-1}^{m+1}, t_l^{m+1})$ for all $l=1,\dots, L^{m+1}$, 
	\item $t_0^{m}=t_m$ and $(t_l^{m})_{l=1}^{(L^{m})}$ is an increasing enumeration of the set $(t_l^{m+1})_{l=0}^{(L^{m+1})}\cup f[J^{m}]$ (which also defines $L^{m}$),
	\item $F_t=F_{t_m}\cup\{\min J^{m}, \min J^{m}+1,\dots,f^{-1}(t)\}$ for every $t\in f[J^{m}]$. 
\end{enumerate}
Note that conditions (ii), (iii) and (v)  in presence of the condition $(\alpha)$ yield 
\begin{enumerate}\setlength\itemsep{0.1cm}
	\item[(viii)] $g_{F_t}\dotleq g_{F_s}\iff t<s$ for any $s\in f[D_{n-1}\cup J^{M}\cup\dots\cup J^{m+1}]$, $t\in f[J^{m}]$.
\end{enumerate}
We also check at each $m$-th step (1), (2) and (3) with $\supp x_1\cup\dots\cup\supp x_{n-1}\cup J^{M}\cup\dots\cup J^{m}$ replacing $D_n$ and $\varepsilon^{m}=\varepsilon_{n-1}+\tfrac{M-m}{4^nM}$ replacing $\varepsilon_n$. 

We justify now the inductive construction (for $m=M,\dots,0$). 

The construction for $m=M$ is clear by (i). (1) and (2) follow immediately by the inductive assumption on $n-1$ and definition, in particular (ii) and (viii). For (3) pick any $F\in\mathcal{C}_f$, $F\subseteq D_{n-1}\cup J^{M}$, then $F=F'\cup J'$
with $F'\subseteq D_{n-1}$ and $J'\subseteq J^{M}$, let $t=\max f[J']$. By the inductive assumption (3) for $n-1$, there is $G'\in\mathcal{C}_f|_{D_{n-1}}$ with $\|g_{F'}-g_{G'}\|<\varepsilon_{n-1}$,  $g_{G'}\in K_{n-1}$, $\max f[G']\leq \max
f[F']=t$ and $g_{G'}\dotleq g_{F_t}$. As $t\leq t_M$, by (2) we have $g_{F_t}\dotleq g_M$. As by (ii) $g_M+a_{L^{M}}^{M}y_n^*\in K$, also $g_M+\tfrac{|J'|}{N}y_n^*\in K$ by 1-unconditionality of $(y_n^*)$. For $G=G'\cup J'\in\mathcal{C}_f$, by
$g_{G'}\dotleq g_M$, also $g_G=g_{G'}+g_{J'}=g_{G'}+\tfrac{|J'|}{N}y_n^*\in K$. Moreover, $g_G\dotleq g_M+g_{J'}\dotleq g_t$, as $J'\subseteq \{\min J^{M},\dots,f^{-1}(t)\}$. To finish the proof of (3) note that $\|g_G-g_F\|<\varepsilon_{n-1}$.

Fix $m<M$ and assume we have performed the construction for $m+1,m+2,\dots,M$. 

The condition (ii)  well defines scalars $(a_l^{m})_l$ by (3) for $m+1$, the rest of the definition is clear. (1) follows directly by the definition of $a_{L^{m}}^{m}$ in (ii), (vii) and (iv).  

For (2) fix $s<t$ in $f[D_{n-1}\cup J^{M}\cup\dots\cup J^{m}]$ with at least one of them in $f[J^{m}]$ (otherwise we use the inductive assumption for $m+1$). If both $s,t\in f[J^{m}]$, then (vii) yields $g_{F_s}\dotleq g_{F_t}$. If $s\not\in
f[J^{m}]\ni t$, then $g_{F_s}\dotleq g_{F_t}$ goes by (viii). If $s\in f[J^{m}]\not\ni t$, then $g_{F_t}\cancel{\dotleq} g_{F_s}$ by (viii), thus $g_{F_s}\dotleq g_{F_t}$ by condition $(\alpha)$.    

For (3) take $F\in\mathcal{C}_f$, $F\subseteq D_{n-1}\cup J^{M}\cup\dots\cup J^{m}$ with $t=\max f[F]\in f[J^{m}]$. Then, as $\max J^{m+1}<\min J^{m}$, $F=F'\cup J'$ with $F'\subseteq D_{n-1}\cup J^{M}\cup \dots\cup J^{m+1}$ and $J'\subseteq J^{m}$
with $t=\max f[J']=t\geq\min f[J']>\max f[F']=:s$. By the inductive assumption for $m+1$, there is $G'\in\mathcal{C}_f$, $G'\subseteq D_{n-1}\cup J^{M}\cup\dots\cup J^{m+1}$ with $\|g_{G'}-g_{F'}\|<\varepsilon^{m+1}$, $\max f[G']\leq \max f[F']=s$,
$K\ni g_{G'}\dotleq g_{F_s}$. Write $F_t=F_{t_m}\cup\{\min J^{m},\dots,f^{-1}(t)\}$ by (vii). 

If $s\leq t_m$, then by (2) $g_{F_s}\dotleq g_m$, so $g_{F_s}+g_{J'}\dotleq g_m+g_{J'}\dotleq g_{F_t}$ as $J'\subseteq \{\min J^{m}, \dots, f^{-1}(t)\}$, so $g_{G'} + g_{J'} \dotleq g_{F_t}$. 

If $s>t_m$, then $s=t^{m+1}_l$ for some $1\leq l\leq L^{m+1}$. 
As $\min f[J']>s$ and $J'\subseteq J^{m}$, $g_m+(a_l^{m}+\tfrac{|J'|}{N})y_n^*\in K$. In particular,
$a_l^{m}<a_{L^{m}}^{m}$, and condition $(\alpha)$ and maximality of $a^{m}_l$ yields $g:=g_{F_s}\dotleq g_m+(a_l^{m}+\tfrac{1}{N})y^*_n=: f$. Apply  condition $(\beta)$ to $g$, $f$, $a=a_l^{m}+\tfrac{1}{N}$ and $b=\tfrac{|J'|-1}{N}$ to get 
$K\ni g_{F_s}+\tfrac{|J'|-1}{N}y^*_n\dotleq g_m+(a_l^{m}+\tfrac{|J'|}{N})\dotleq g_{F_t}$. 
Finally, for $G=G'\cup J'\setminus \{\max J'\}\in \mathcal{C}_f$ we get $\|g_F-g_G\|<\varepsilon^{m+1}+\tfrac{1}{N}\leq \varepsilon^{m}$ and $K\ni g_G\dotleq g_{F_t}$, which proves (3) and thus ends the $m$-th step of the inductive construction
(satisfying at each step the suitable variants of (1), (2) and (3) - with $D_{n-1}\cup J^{M}\cup\dots\cup J^{m}$ replacing $D_n$ and $\varepsilon^{m}$ replacing $\varepsilon_n$.

Once we complete the inductive procedure inside the $n$-th step, we use $(J^{m})_{m=M}^0$ to define $x_n$ by \eqref{xx_n}, whereas $f$ on $\supp x_n=\bigcup_{m=M}^0J^{m}$ is already chosen by (iv) and (v). Note that (1), (2) and (3) are satisfied for
$D_n=D_{n-1}\cup J^{M}\cup\dots J^{0}$ and $\varepsilon_n\geq\varepsilon^{0}$. Therefore, we need only to check (4). Take $g\in K_n$ and write it as $g=g'+ay^*_n$ for some $g'\in K_{n-1}$ and $a\neq 0$. By the inductive assumption (4) for $n-1$ and
by the definition of $t_1<\dots<t_M$, $\|g'-g_m\|<\varepsilon_{n-1}$ for some $1\leq m\leq M$. Pick $k\in\N$ with $\tfrac{k-1}{4^n}\leq a<\tfrac{k}{4^n}$, note that $k\leq 4^na_{L^{m}}^{m}+1=|J^{m}|+1$ by (ii) and (iii). Thus for $t=f (\min
J^{(m)}+k-2)$ we obtain $\|g-g_{F_t}\|<\varepsilon_{n-1}+\tfrac{1}{4^n}\leq \varepsilon_n$, which proves  (4) and ends the $n$-th step of the general construction. 
\end{proof}

\section{Remarks on the extreme points in graph-generated spaces}\label{remext}

Given a graph $G=(V,E)$ let $X_G=X_{\mc{C}(G)}$, $Y_G=Y_{\mc{C}(G)}$, and $\|\bullet\|_G=\|\bullet\|_{\mc{C}(G)}$. In Remark \ref{extYF} it was pointed out that for a perfect $G$ \begin{align*}
\mrm{Ext}(B(Y_G))&=W\big(\max\big(\overline{\mc{A}(G)}\big)\big)\\
&=\big\{y\in\{-1,0,1\}^V:\mathrm{supp}(y)\,\text{ is a
maximal anti-clique}\big\}.
\end{align*} 

It is natural to ask what we can say about $\mrm{Ext}(B(Y_G))$  when $G$ is not perfect. This was a subject of Sebastian Jachimek's doctoral thesis \cite{SebTh}. We will recall some of these results, with his kind permission, skipping some details of
the proofs. We will call elements of $W(\max(\overline{\mc{A}(G)}))$ \emph{terminal points} (of $G$). First of all, note that in the general case $\mrm{Ext}(B(Y_G))$ contains all extreme points of $B(X_G)$ as well as all terminal points of $G$. By the strong perfect graph theorem (Theorem \ref{strong-perfect}), if $G$ is not perfect, then it contains either an odd hole or an odd antihole (of length $\geq 5$). Now, it is not hard to see that if $G = (V,E)$ itself is a $(2n+1)$-hole, then $x\colon V \to \mathbb{R}$, $x(v) = 1/2$ for each $v\in V$, is an extreme point of $B(Y_G)$. We show that if $G$ contains an odd hole $D$, and $x:D\to\mbb{R}$, $x(v)=1/2$ for $v\in D$, then it can be extended to an extreme point of $B(Y_G)$:

\begin{thm}\label{extendingextreme} If $G = (V,E)$ contains an odd hole $D$, then there is an extreme point of $B(Y_G)$ whose value is $1/2$ on each $v\in D$ (so it is not terminal) and $\mrm{ran}(x)\subseteq\{0,1/2,1\}$.
\end{thm}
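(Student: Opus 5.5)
We extend the half-integral vector on the hole greedily to a maximal point of the face it lies on, and then show that the resulting point is pinned down by the tight clique constraints.

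\emph{Setup.} Let $D=\{v_0,\dots,v_{2n}\}$ be the odd hole, with $v_iv_{i+1}$ (indices mod $2n+1$) the only edges inside $D$. Set $x_0=\frac12\chi_D$. Every clique of $G$ meets $D$ in at most an edge, so $\|x_0\|_G=1$. The ball $B(Y_G)$ is the set
\[\Big\{y:\ \sum_{v\in C}|y(v)|\leq 1\ \text{for every }C\in\mc{C}(G)\Big\}.\]
It suffices to produce a nonnegative $x$ with $x\clrest D=x_0$ and $\mrm{ran}(x)\subseteq\{0,\frac12,1\}$ such that $x$ is an extreme point of the positive part $B(Y_G)_+$. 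If $x=\frac12(y+z)$ with $y,z\in B(Y_G)$, then for $v\notin\supp x$ we get $y(v)=-z(v)$. Replacing $y,z$ by $|y|,|z|$ and using that $x$ dominates their average reduces the problem to the positive part, except for handling the coordinates where $x=0$.

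\emph{Construction.} Enumerate $V\setminus D=\{u_1,u_2,\dots\}$. Assume $x$ has been defined on $D\cup\{u_1,\dots,u_{k-1}\}$ with values in $\{0,\frac12,1\}$ and with every finite clique sum at most $1$. Let $x(u_k)$ be the largest value in $\{0,\frac12,1\}$ keeping all clique sums at most $1$. Since each clique has finite support, the limit $x$ lies in $B(Y_G)$. Moreover, by maximality, for every $u$ with $x(u)<1$ one of the following holds.
\begin{itemize}
\item $x(u)=\frac12$ and some clique containing $u$ has sum exactly $1$.
\item $x(u)=0$ and some clique $C\ni u$ has $\sum_{C\setminus\{u\}}x>\frac12$, hence $=1$ by half-integrality.
\end{itemize}
In the second case this is a tight clique whose constraint, after adding $u$, forces $y(u)=0$ for any $y\leq$ the relevant bound.

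\emph{Extremality.} Suppose $x=\frac12(y+z)$ with $y,z\in B(Y_G)$.
\begin{enumerate}
\item On $D$: each edge $\{v_i,v_{i+1}\}$ is a clique with $x$-sum $1$. Hence $|y(v_i)|+|y(v_{i+1})|\leq 1$, and averaging forces $y(v_i)+y(v_{i+1})=1$ with $y\geq 0$ on $D$. Solving this cyclic system around an odd cycle gives $y=\frac12$ on $D$. This is exactly where oddness is used.
\item On vertices with $x(u)=1$: the singleton clique gives $|y(u)|,|z(u)|\leq 1$ with average $1$, so $y(u)=z(u)=1$.
\item On vertices with $x(u)=0$: a tight clique $C\ni u$ from the construction has $y$ and $z$ already determined on $C\setminus\{u\}$ with sum $1$, which forces $y(u)=z(u)=0$.
\item On vertices with $x(u)=\frac12$ outside $D$: use the tight clique $C\ni u$ together with the other values already pinned.
\end{enumerate}

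\emph{Main obstacle.} Step 4 is the difficult one. A tight clique $C\ni u$ may contain another $\frac12$-vertex $w$ outside $D$, and then only $y(u)+y(w)=1$ is forced, not $y(u)=\frac12$. My first attempt is to strengthen the greedy rule: assign $\frac12$ to a new vertex $u$ only if some tight clique through $u$ has all its other vertices already pinned. Otherwise assign $0$ or $1$, whichever is feasible, or $0$ when $\frac12$ is the maximal feasible value but unpinned.

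Then I need to check that every vertex given $0$ still has a witnessing tight clique. The intended argument is that a $0$ assigned after declining $\frac12$ lies in a clique with sum exactly $\frac12$ plus another unpinned $\frac12$-vertex. I would try to exclude the existence of such a vertex by ordering the enumeration along a BFS from $D$, so that the partners are processed in an order where pinning propagates, in the manner of the cycle argument in step 1. If this fails, the fallback is a more elaborate setup. Consider the finite set $W$ of coordinates with value $\frac12$ together with the tight cliques, and choose $x$ so that the system of tight equations restricted to $W$ has full rank. This can be arranged by a lexicographic maximization (e.g. maximizing $\sum 2^{-k}x(u_k)$), since a vertex of the polytope is characterized by full rank of its tight constraints.
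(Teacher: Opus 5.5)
Your greedy construction is correct, and it is in fact the paper's construction. The paper's four rules (value $0$ if some already-defined neighbour has value $1$ or the vertex closes a triangle with two $\tfrac12$-vertices; value $1$ if all already-defined neighbours are $0$; $\tfrac12$ otherwise) are exactly your greedy choice. The paper runs them along a breadth-first enumeration from $D$, after reducing to the component of $D$.

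Your Steps 1--3 are also fine. Step 3 does not even need the other vertices to be pinned: if $\sum_{C\setminus\{u\}}x=1$, then $1\leq\tfrac12\sum_{C\setminus\{u\}}(|y|+|z|)$, and together with $\sum_C|y|\leq1$ and $\sum_C|z|\leq1$ this gives $y(u)=z(u)=0$. Likewise, your preliminary reduction to $B(Y_G)_+$ is unnecessary.

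The genuine gap is Step 4. You leave it open and only sketch workarounds (a modified greedy rule, a BFS ordering, a lexicographic/full-rank argument), none of which is carried out. They are also not needed. The modified rule would only risk the problem you point out yourself, namely $0$-vertices with no witnessing tight clique.

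The missing observation concerns the witnessing tight clique $C$ of a $\tfrac12$-vertex $u$. It was found at the moment $u$ was assigned, so every vertex of $C\setminus\{u\}$ precedes $u$ in your enumeration $D,u_1,u_2,\dots$. Their values lie in $\{0,\tfrac12,1\}$ and sum to $\tfrac12$, so $C=\{u,w\}\cup Z$ with a single earlier $\tfrac12$-vertex $w$ and $x\equiv0$ on $Z$. Equality in $1=\sum_C x\leq\tfrac12\big(\sum_C|y|+\sum_C|z|\big)\leq 1$ gives $y,z\geq0$ on $C$. Hence $y=z=0$ on $Z$, and $y(u)+y(w)=1=z(u)+z(w)$.

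Now argue by induction along the enumeration. The base case is your Step 1 on $D$. If $y(w)=z(w)=\tfrac12$ is already known for the earlier vertex $w$, then $y(u)=z(u)=\tfrac12$. So the case you feared, a second \emph{unpinned} $\tfrac12$-vertex in the tight clique, never occurs.

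This is the paper's propagation step. There $z$ denotes the perturbation, $\|x\pm z\|_G\leq1$, and the paper shows three things:
\begin{itemize}
\item $\supp(z)\subseteq V_{1/2}$;
\item $z\clrest D\equiv0$, by the odd-cycle computation;
\item $z(w_i)=-z(w_{i-1})$ along a path $w_0,\dots,w_n$ in $G\clrest V_{1/2}$ starting in $D$, using that every $\tfrac12$-vertex outside $D$ is adjacent to an earlier $\tfrac12$-vertex.
\end{itemize}
Once this induction is added, your version works for an arbitrary enumeration of $V\setminus D$. So the paper's reductions to connected and to infinite $G$ are not needed in your setup.
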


\begin{proof} We assume that $V = \{v_0, v_1, \dots\}$ is infinite. We will find the desired extreme point $x$ in a recursive way, setting more and more values of $x$. We may assume, without loss of generality, that $G$ is connected. Indeed, suppose
	that a connected component $C_0$ contains an odd hole and we are able to find $x_0$ supported on $C_0$. For every other component $C$ we can fix a terminal point $x_C$ supported by this component, and then $x = x_0 + \sum_C x_C\in\mrm{Ext}(B(Y_G))$.

\smallskip
Let $D_0=D$ and define $x(v)=1/2$ for each $v\in D_0$. At the $(n+1)$st step let 
\[ k = \min\big\{j\in \mathbb{N}: v_j \in V \setminus D_n\,\text{ and }\,v_j\,\text{ is connected to a vertex in }\,D_n\big\}. \]
Since $G$ is infinite and connected, $k$ is well-defined and, with $D_{n+1} = D_n \cup \{v_k\}$, $\bigcup_n
D_n = V$. At the $(n+1)$st step we define $x(v_k)$ as follows: 
\begin{itemize}\setlength\itemsep{0.1cm}
	\item if there is $v\in D_n$ connected to $v_k$ such that $x(v)=1$, then put $x(v_k)=0$,
	\item if there are $v \ne v'\in D_n$ such that $x(v)=x(v')=1/2$, and $v,v',v_k$ are pairwise connected, then put $x(v_k)=0$,
	\item if all $v\in D_n$ connected to $v_k$ are such that $x(v)=0$, then put $x(v_k)=1$,
	\item otherwise, put $x(v_k)=1/2$.
\end{itemize}

We will sketch the proof that $x\in\mrm{Ext}(B(Y_G))$. Let $V_i = x^{-1}(i)$ for $i\in \{0,1/2,1\}$. Suppose that $z$ is such that $\| x \pm z \|_G \leq 1$. Notice that $\mathrm{supp}(z) \subseteq V_{1/2}$: Indeed, clearly $\mathrm{supp}(z) \cap V_1 = \emptyset$, and if $v\in V_0$, then it is connected either to a $w\in V_1$ or it forms a triangle with some $w, w'\in V_{1/2}$. In any case, $z(v)=0$ follows.

It is easy to see that $V_{1/2}$ is connected. We know that $z\clrest D_0\equiv 0$ because $x\clrest D_0$ is an extreme point of $B(Y_{G\upharpoonright D_0})$. Now, fix a $w\in V_{1/2}\setminus D_0$ with $d(w,D_0)=n>0$ where $d$ is the usual distance function on $G\clrest V_{1/2}$. Then there is a chain $w_0,w_1,\dots,w_n=w$ in $V_{1/2}$ such that $d(w_i,D_0)=i$ and it follows that $z(w_n)=-z(w_{n-1})=z(w_{n-2})=\dots=\pm z(w_0)=0$.
\end{proof}

What can we say about odd antiholes?

\begin{thm}\label{circ} Suppose $G=(V,E)$ is an $(2n+1)$-antihole and that $|x(v)|=1/n$ for each $v\in V$. Then $x\in\mrm{Ext}(B(Y_G))$. 
\end{thm}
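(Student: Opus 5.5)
Write $V=\mathbb{Z}_{2n+1}$, so that $G$ is the complement of the cycle $C_{2n+1}$ with edges $\{i,i+1\}$. Then $\mc{C}(G)$ consists exactly of the independent sets of $C_{2n+1}$. The plan has three steps: show $x\in B(Y_G)$, reduce to the all-$1/n$ vector, and then derive a circulant linear system for any perturbation $z$ and show it has only the trivial solution.

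\emph{Step 1: $\|x\|_G=1$.} Every independent set of $C_{2n+1}$ has at most $n$ elements. So for every $C\in\mc{C}(G)$ we get $\sum_{v\in C}|x(v)|\leq n\cdot\frac1n=1$, with equality on independent sets of size $n$. Hence $\|x\|_G=1$. The maximum independent sets can be listed explicitly: for each $i\in\mathbb{Z}_{2n+1}$,
\[ C_i=\{i,i+2,\dots,i+2(n-1)\}. \]
These are $2n+1$ sets, one for each $i$.

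\emph{Step 2: reduction to $x\equiv 1/n$.} The norm $\|\bullet\|_G$ is invariant under the sign-change maps $S_E$ used in the proof of Theorem \ref{conv}, so these maps are isometries that preserve extremality. We may therefore assume $x(v)=1/n$ for every $v$.

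\emph{Step 3: the linear system.} Suppose $\|x\pm z\|_G\leq 1$. For each $i$,
\[ 1\geq \sum_{v\in C_i}|x(v)\pm z(v)|\geq \sum_{v\in C_i}\big(x(v)\pm z(v)\big)=1\pm\sum_{v\in C_i}z(v). \]
Taking both signs gives $\sum_{v\in C_i}z(v)=0$ for every $i\in\mathbb{Z}_{2n+1}$. This is a homogeneous system $Mz=0$, where $M$ is the $(2n+1)\times(2n+1)$ circulant matrix whose first row is the indicator of $\{0,2,\dots,2(n-1)\}$.

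\emph{Step 4: $M$ is invertible.} This is the only real obstacle. The eigenvalues of a circulant matrix are $p(\om)=\sum_{k=0}^{n-1}\om^{2k}$, where $\om$ ranges over the $(2n+1)$-th roots of unity.
\begin{itemize}
\item For $\om=1$ we get $p(1)=n\neq0$.
\item For $\om\neq1$, since $2n+1$ is odd, $\om^2\neq1$. Summing the geometric series gives $p(\om)=(\om^{2n}-1)/(\om^2-1)$. Here $\om^{2n}=\om^{-1}\neq 1$, so $p(\om)\neq0$.
\end{itemize}
Hence $M$ is invertible and $z=0$, so $x\in\mrm{Ext}(B(Y_G))$.

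The finite-dimensional setting causes no extra difficulty: $X_G=Y_G$ here, so extremality is tested exactly as above. If one prefers to avoid roots of unity, an alternative for Step 4 is to subtract the equations for $C_i$ and $C_{i+1}$. This yields $z(i)=z(i+2n)=z(i-1)$, so $z$ is constant. Then $nz(0)=0$ gives $z=0$.
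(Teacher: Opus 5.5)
Your proof is correct and is essentially the paper's argument: the $2n+1$ cyclic shifts of a maximum clique give a circulant system $\sum_{v\in C_i}z(v)=0$. You also tighten it in small ways: the estimate $|a|\geq a$ replaces the paper's small-$\eps$ device, your roots-of-unity computation supplies the invertibility the paper only cites, and you rightly speak of \emph{maximum} cliques, whereas the paper says all maximal cliques have size $n$, which fails for $n\geq 4$. One slip in your optional elementary alternative: $C_i$ and $C_{i+1}$ are disjoint, so you should subtract the equations for $C_i$ and $C_{i+2}$, whose symmetric difference is $\{i,i+2n\}=\{i,i-1\}$.
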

\begin{proof} Enumerate $V=\{v_1,\dots,v_{2n+1}\}$ along the associated cycle and notice that \[ (1,0,1,0,\dots,1,0,1,0,0) \] 
is the characteristic function of a maximal clique. In fact, all maximal cliques are of size $n$ and their characteristic functions are shifts of the one above, enumerate them as $C_1,\dots,C_{2n+1}$. Let $A$ be the $(2n+1)\times(2n+1)$ matrix such that its $i$th row is the characteristic function of $C_i$. One can show (see \cite[Proposition 4.3.9]{SebTh}) that $\det(A)\ne 0$. \footnote{A matrix of this form, that is, when the rows list all possible shifts of a fixed non-zero vector, is called \emph{circulant}. The fact that these matrices  are invertible follows from the fact that its eigenvalues are combinations of square $n$-roots of identity where the coefficients are given by a discrete Fourier transform of the inducing vector.}

\smallskip
Now fix an $x$ such that $|x(v)|=1/n$ for each $v$, we show that $x$ is an extreme point of $B(Y_G)$. We can assume that $x(v)=1/n$ for each $v$. Let $z\in Y_G$ be such that $\| x\pm z\|_G\leq 1$. Then, with a small enough $\eps>0$, for each maximal clique $C$, we have
\[ 1\pm \sum_{v\in C}\eps z(v)=\sum_{v\in C}\big|1/n\pm \eps z(v)\big|=\sum_{v\in C}\big|x(v)\pm \eps z(v)\big|\leq\|x+\eps z\|_G \leq 1,\] 
and so $\sum_{v\in C}z(v)=0$, i.e. $Az=0$, hence $z=0$.
\end{proof}

Perhaps the extreme points of the above form can be extended to the whole graph containing an odd antihole, like in Theorem \ref{extendingextreme}, but the construction may be much more involved.

\smallskip
Finally, we present an additional application of the idea used in the last proof: 

\begin{cor} For each $q\in \mathbb{Q}\cap [-1,1]$ there is a graph $G$ and an $x\in\mrm{Ext}(B(Y_G))$ such that $x(v)=q$ for some $v$.
\end{cor}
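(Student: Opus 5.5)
We reduce to $q\in[0,1]$ and then combine Theorem~\ref{circ} with one extra vertex. The reduction to $q\ge 0$ uses the fact that $B(Y_G)$ is invariant under sign changes of single coordinates. Such a sign change $S_{\{v\}}$ is a linear isometry of $Y_G$, so it maps extreme points to extreme points. Hence if $x$ is extreme with $x(v)=q$, then $S_{\{v\}}(x)$ is extreme with value $-q$ at $v$.

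The cases $q\in\{0,1\}$ are trivial and use terminal points. Take $G$ to be a single edge $\{v,w\}$. The vector with $x(w)=1$ and $x(v)=0$ lies in $W(\max(\mc{A}(G)))$, and it is extreme because a single edge is perfect (Remark~\ref{extYF}). So we may assume $q=p/m$ with $0<p<m$, hence $m\ge 2$.

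Let $n=m$ and let $H$ be the $(2n+1)$-antihole from Theorem~\ref{circ}. Its maximal cliques are $C_1,\dots,C_{2n+1}$, all of size $n$, and their incidence matrix $A$ is invertible. Fix $c=m-p\in\{1,\dots,n-1\}$ and a subset $S\subseteq C_1$ with $|S|=c$. Let $G$ be $H$ together with a new vertex $v$ joined exactly to the vertices of $S$. Define $x(u)=1/n$ for $u\in H$ and $x(v)=1-c/n=p/m$.

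First check $\|x\|_G\le 1$. A clique avoiding $v$ is a clique of $H$, so its $x$-sum is at most $n\cdot\frac1n=1$. A clique containing $v$ is $\{v\}\cup T$ with $T\subseteq S$, so its sum is at most $1-c/n+c/n=1$.

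Next, extremality, following the argument of Theorem~\ref{circ}. All coordinates of $x$ are positive. Suppose $\|x\pm z\|_G\le 1$. Each tight clique $C$ (with $x$-sum exactly $1$) forces $\sum_{u\in C}z(u)=0$, after rescaling $z$ by a small $\eps$ so that no signs flip. The tight cliques include $C_1,\dots,C_{2n+1}$ and $\{v\}\cup S$. Their incidence matrix is $A$ bordered by a zero column for $v$ and one extra row $(\chi_S,1)$; it is block lower triangular with diagonal blocks $A$ and $1$, so its determinant is $\det A\neq 0$. Hence $z=0$, and $x$ is extreme.

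If one insists on $V$ being infinite, or on $V=\NN$, add a disjoint infinite component carrying a terminal point. The sum remains extreme, exactly as in the reduction step of the proof of Theorem~\ref{extendingextreme}.

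The only delicate point is confirming that every clique through $v$ is contained in $\{v\}\cup S$. This holds because $v$'s neighbourhood is exactly $S$, and $S$ is already a clique. Beyond that, the argument just reuses the invertibility of the circulant matrix from Theorem~\ref{circ}.
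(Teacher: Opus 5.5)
Your proof is correct and is essentially the paper's argument: the same $(2n+1)$-antihole plus one extra vertex joined to a clique of size $n-p$, with $x\equiv 1/n$ on the antihole and the same bordered matrix $\begin{bmatrix}A&0\\ \chi_S&1\end{bmatrix}$ of determinant $\det A\neq 0$. You also supply the norm check $\|x\|_G\le 1$ and the cases $q\le 0$ and $q=1$, which the paper leaves implicit.

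One harmless slip, inherited from the proof of Theorem~\ref{circ}: for $n\ge 4$ not every maximal clique of the antihole has size $n$ (e.g.\ $\{v_1,v_4,v_7\}$ in the $9$-antihole). Your argument only needs that the clique number is $n$, and that the $2n+1$ cliques of size $n$ are tight with an invertible circulant incidence matrix, so the conclusion is unaffected.
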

\begin{proof} We can assume that $q=i/n\in (0,1)$ where $i,n\in\NN$ and $n\geq 2$. Let $V = D \cup \{w\}$, where $D$ is a $(2n+1)$-antihole. Fix a non-maximal clique $C$ in $D$ of size $n-i$, connect $w$ to all vertices in $C$, and define $x$ on the resulting graph $G$ by $x(v) = 1/n$ for $v\in D$ and $x(w) = i/n$. We claim
that $x\in\mrm{Ext}(B(Y_G))$. Consider the matrix $A$ associated to $D$ as in the proof of Theorem \ref{circ} and extend it to a $(2n+2)\times (2n+2)$-matrix $B$, 
\[
	B= \begin{bmatrix}
		A & 0 \\
		y & 1 \\
\end{bmatrix}\]
where $y$ is a characteristic function of $C$. Then $\det(B)=\det(A)\ne 0$ and if $\| x\pm z\|_G\leq 1$ then, by the same argument as above, $Bz = 0$, hence $z = 0$.
\end{proof}

The above corollary indicates that, in the general case, the extreme points of $B(Y_G)$ may be much more complicated than the terminal points of $G$.

\bibliographystyle{siam}
\bibliography{bib-norm}
\end{document}